\documentclass[reqno,11pt]{amsart}
\usepackage{geometry}
\usepackage[numbers,sort&compress]{natbib}
\usepackage{graphicx}
\usepackage{amsfonts,amsmath,amssymb,amsthm,mathtools}
\usepackage{bbm}
\usepackage[foot]{amsaddr}
\usepackage[colorlinks=true,citecolor=blue]{hyperref}
\usepackage{pgfplotstable}
\usepackage{tikz}
\usetikzlibrary{arrows,positioning,chains,fit,shapes,calc,decorations}
\usepackage[font=small, labelfont=rm]{caption} % Small font for main caption
\usepackage[font=small, labelformat=simple]{subcaption} % Small font for subcaptions
\newcommand{\Fone}{F_1}
\newcommand{\Gauss}{\,{}_2F_1}
\newcommand{\ii}{\mathrm i}
\newcommand{\D}{\mathcal D}
\newcommand{\st}{s_{\text{\sc v}}}
\newcommand{\ts}{s_{\text{\sc h}}}
\newcommand{\lip}{\ell\hspace{0.25ex}}
\newcommand{\lipalpha}{\ell_\alpha}
\newcommand{\salpha}{s_\alpha}
\newcommand{\zalpha}{u_\alpha}
\newcommand{\palpha}{r}
\newcommand{\pp}{\lambda}
\newcommand{\pq}{q}
\newcommand{\sr}{K}
\newcommand{\rew}{\omega}

\newcommand{\norm}[1]{\left\Vert #1 \right \Vert}
\newcommand{\cm}{c}
\newcommand{\bm}{R}
\newcommand{\EE}{\mathbb{E}}
\newcommand{\PP}{\mathbb{P}}

\newcommand{\ZZ}{\mathbb{Z}}
\newcommand{\RR}{\mathbb{R}}
\newcommand{\N}{\mathbb{N}}

\newcommand{\Rtight}{S}

\DeclareMathOperator{\Bin}{Bin}
\DeclareMathOperator{\Ber}{Bernoulli}

\newtheorem{theorem}{Theorem}
\newtheorem{proposition}[theorem]{Proposition}

\newtheorem{definition}{Definition}
\newtheorem{lemma}{Lemma}
\newtheorem{corollary}[theorem]{Corollary}

\title[Krasnosel'skii--Mann iterations beyond asymptotics]{Krasnosel'skii--Mann iterations beyond asymptotics:\\ a combinatorial analysis}

\author{Mario Bravo$^{1}$}
\address{$^1$Departamento de Administraci\'on, Facultad de Administraci\'on y Econom\'ia,
Universidad de Santiago de Chile, Chile}
\email{mario.bravo.g@usach.cl}
\author{Roberto Cominetti$^{2}$}
\address{$^2$Institute for Mathematical and Computational Engineering and
Department of Industrial and Systems Engineering,
Pontificia Universidad Cat\'olica de Chile}
\email{roberto.cominetti@uc.cl}

\begin{document}

\begin{abstract}
We revisit the classical Krasnosel'skii--Mann fixed point iteration for contractions and nonexpansive maps in general normed spaces. This iteration is ubiquitous across a wide range of areas, including convex optimization, monotone inclusions, Markov decision processes, under-relaxed methods for nonlinear PDEs, and more.
Drawing on a remarkable connection with a Markov chain on $\mathbb{Z}^2$, and using counting arguments from  enumerative combinatorics of lattice paths, we derive explicit estimates for the distance between iterates, as well as non-asymptotic error bounds for the fixed point residuals.
As the contraction parameter approaches one, these bounds smoothly recover the known estimates for nonexpansive maps. Building upon these estimates, we further derive error bounds for inexact Krasnosel'skii--Mann iterations.
\end{abstract}

\subjclass[2020]{Primary 47J25, 47J26, 65J15; Secondary 33C05, 33C65, 33C90, 65K15}
\keywords{fixed point iterations, contractive maps, error bounds, lattice paths, under-relaxed Picard iterations, hypergeometric functions}

\maketitle

\vspace{-3ex}
\section{Introduction}
Let $C$ be a convex set in a general normed space $(X,\norm{\cdot})$, and  $T:C\to C$ an $\lip$-Lipschitz map with $\lip\in (0,1]$.
As is well known, for strict contractions ($\lip<1$) the  Banach--Picard iteration $x_{m+1}=Tx_{m}$ started from an arbitrary $x_0 \in C$
satisfies the geometric error bound
\begin{equation}\label{eq:bound_p}
\|x_m-x_{m+1}\|=\|x_m-Tx_m\|\leq \|x_0-Tx_0\|\,\lip^m.
\end{equation}
Thus $(x_m)_{m\in\N}$ is a Cauchy sequence and, if $X$ is complete and $C$ is closed, its limit $x^*$ exists and is the unique fixed point of $T$. A triangle inequality then yields $\|x_{m} - x^*\|\leq \|x_m-Tx_m\|/{(1-\lip)}$.

When $T$ is just nonexpansive ($\lip=1$) both the existence and the uniqueness of fixed points can fail, and the bound \eqref{eq:bound_p} becomes uninformative. For this case, \citet{krasnosel1955two} and
\citet{mann1953mean} introduced the following under-relaxed iteration with $\alpha \in (0,1]$ and $x_0 \in C$,
\begin{equation*} \tag{$\textsc{km}$} \label{eq:km_a}
 (\forall m\in\N)\qquad   x_{m+1}=(1-\alpha)\,x_{m}+\alpha\,Tx_{m}.
\end{equation*}
This is just Banach--Picard's iteration for the averaged map  $T_\alpha x=(1-\alpha)\,x + \alpha\, T x$ and then, since  $\|x - Tx\|= \|x - T_\alpha x\|/\alpha$ and $T_\alpha$ 
 is $\lipalpha$-Lipschitz with $\lipalpha=1-\alpha + \alpha \,\lip$, it follows that
\begin{equation} \label{eq:bbbound_p2}
 \|x_m - Tx_m\|\leq \|x_0-Tx_0\|\,\lipalpha^{\hspace{0.3ex}m}.
\end{equation} 

 This bound is weaker than \eqref{eq:bound_p} because $\lipalpha\ge \lip$, and remains uninformative when $\lip=1$. However, a finer analysis by \citet{baillion1996rate} showed that for nonexpansive maps on domains with  finite diameter $\mathop{\rm diam}(C)\le\kappa$,  for $\alpha\in(0,1)$ and $m\geq 1$ the residuals of \eqref{eq:km_a}  satisfy the vanishing bound
   \begin{equation} \label{eq:BB_constant}
    \|x_m-Tx_m\|\leq \kappa\hspace{0.2ex}\Gauss\!\left(\mbox{$\frac12$},-m;2;4\alpha(1\!-\!\alpha)\right)\le \frac{\kappa}{\sqrt{\pi \alpha(1\!-\!\alpha)m}},
    \end{equation}
    where $\Gauss$ denotes Gauss's hypergeometric function. 
    The hypergeometric expression extends continuously to $\alpha=1$, where it equals 1; the square-root estimate is not intended at that endpoint.

For $\lip<1$ the geometric bound \eqref{eq:bound_p} is smaller than \eqref{eq:BB_constant} if $m$ is large enough. However, in the nearly nonexpansive regime $\lip\approx 1$ the distinction between contractive and nonexpansive dynamics becomes increasingly blurred,
and the rate of the Banach--Picard iteration deteriorates to the point that a large number of iterations might be required before   \eqref{eq:bound_p} overtakes \eqref{eq:BB_constant}. 
A natural question ---and the main motivation for this paper--- is whether a sharper analysis of \eqref{eq:km_a} in the contractive setting can produce bounds that decay geometrically when $\lip<1$, while smoothly recovering \eqref{eq:BB_constant} as $\lip \to 1$.
A related question is how to choose the relaxation parameter 
$\alpha$ in order to obtain the smallest possible error bound, and how all of this changes when the iteration \eqref{eq:km_a} is subject to errors.

Beyond their intrinsic interest, these questions are motivated by applications in various domains. 
In discounted Markov decision processes, the value operator is a $\|\cdot\|_\infty$-contraction with 
$\lip$ the discount factor, and long-horizon problems correspond to the regime 
$\lip\approx 1$ in which convergence becomes slow and explicit error bounds are especially relevant (see  \citet{puterman1994markov} and \citet{sutton1998reinforcement}). Likewise, many algorithms in convex optimization, variational inequalities, and monotone inclusions, can be seen as fixed point iterations of proximal or resolvent maps. When strong convexity or strong monotonicity is present but weak, these maps are contractive yet nearly nonexpansive, and the iteration \eqref{eq:km_a} arises naturally (see \citet{bauschke2011convex}). Another area where \eqref{eq:km_a} shows up is the evolution of lazy random walks on graphs and metric spaces (see 
\citet{levin_peres_wilmer_2017}, \citet{ollivier2010survey} and  \citet{woess2000random}).
In a different domain, numerical solutions of nonlinear PDEs by 
finite-element, finite-volume, and finite-difference methods often use under-relaxed iterations of the form \eqref{eq:km_a}  to improve robustness and damp oscillations
(see \citet[\S{}4.5]{patankar1980numerical}; \citet[\S{}8.9 and \S{}14]{moukalled2016finite};
\citet[\S{}5.4, \S{}7 and  \S{}8]{ferziger_computational_2020}; \citet[\S{}5]{langtangen2017finite}). 
In particular, under-relaxation is implemented in the CFD software {\em OpenFOAM} and in the multiphysics simulation tool {\em MOOSE}, for both Picard and Newton iterations. As noted in these references, there is no general rule for selecting 
a suitable relaxation  $\alpha$, 
which is problem dependent and has to be adjusted 
empirically.

Taken together, these examples show that Krasnosel'skii--Mann  schemes are not just a  variation of the classical contraction framework, but a useful and still not fully understood tool that bridges the strong convergence properties of contractive maps with the subtler phenomena characteristic of nonexpansive iterations.
We hope  our results contribute to a better understanding of how
and when under-relaxation can be beneficial, and provide some guidance on how to select the relaxation parameter $\alpha$ in \eqref{eq:km_a}.

\subsection{Summary of the paper}

We place our study in an abstract framework with $(X,\|\cdot\|)$ a normed space and $T:C \to C$ an $\lip$-Lipschitz map with $\lip\in (0,1]$, on a convex subset  $C\subseteq X$ of finite diameter $\mathop{\rm diam}(C)\le\kappa$.  Throughout we consider a fixed $\alpha\in (0,1]$ (including $\alpha=1$) and we denote
\begin{equation*}
\left\{\begin{aligned}
\lipalpha&\triangleq 1-\alpha+\alpha\,\lip,\\
\sigma&\triangleq\sqrt{\lip}.
\end{aligned}\right.
\end{equation*}

Our main goal is to bound the fixed point residuals for the \eqref{eq:km_a} iteration. 
Since $\|x_m-Tx_m\|=\|x_m-x_{m+1}\|/\alpha$, our strategy is to estimate the distance between iterates.
The core argument, inspired by the nonexpansive case \cite{baillion1996rate,csv14}, relies on a family of recursive bounds $\|x_m - x_n\|\leq \kappa\,\cm_{m,n}$ defined  for any pair of iterates. Interpreting the recurrence as absorption probabilities for a specific Markov chain in $\ZZ^2$, and  using counting arguments from enumerative combinatorics of lattice paths, we derive analytic formulas for these $\cm_{m,n}$'s.
Our first main result, proved in Section \ref{sec:2}, is as follows.

\begin{theorem}\label{thm:main} Let $(x_m)_{m\geq 0}$ be the sequence generated by \eqref{eq:km_a} starting from $x_0\in C$. Then, for all $0\leq m\le n$ we have $\norm{x_m-x_n}\le\kappa\,\cm_{m,n}$ and $\|x_m-Tx_m\|\leq\kappa\,\cm_{m,m+1}/\alpha$, where
\begin{align}
\label{eq:cmn}
\cm_{m,n}&=\alpha\sum_{j=0}^{n-1} \sum_{k=0}^m
L^{k,j}_{m,n}\,(\alpha^2\lip)^k(1\!-\!\alpha)^{n+m-1-j-2k},\\
\label{eq:latticepaths}
L^{k,j}_{m,n}&= \binom{m}{k}\!\binom{n\!-\!1\!-\!j}{k} - \binom{n}{k}\!\binom{m\!-\!1\!-\!j}{k}.
\end{align}
\end{theorem}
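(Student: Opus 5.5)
We set up a recursive bound on distances between iterates, following the nonexpansive case \cite{baillion1996rate,csv14}, and then solve the recursion by counting lattice paths.

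\textbf{Step 1: the recursion.} Write $x_{n}=(1-\alpha)x_{n-1}+\alpha Tx_{n-1}$. Unrolling this for index $n$, we have $x_n=(1-\alpha)^n x_0+\sum_{j=0}^{n-1}\alpha(1-\alpha)^{n-1-j}Tx_j$, and similarly for $m$. For $1\le m\le n$ we write both $x_m$ and $x_n$ as convex combinations of the $Tx_j$ and $x_0$, or more simply we use one step at a time:
\[
x_m-x_n=(1-\alpha)(x_{m-1}-x_n)+\alpha\,(Tx_{m-1}-x_n),
\]
and then expand $x_n=(1-\alpha)x_{n-1}+\alpha Tx_{n-1}$ inside the second term. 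This gives the candidate recursion
\[
c_{m,n}=(1-\alpha)c_{m-1,n}+\alpha^2\ell\, c_{m-1,n-1}+\alpha(1-\alpha)\,d_{m-1,n-1},
\]
with boundary values $c_{0,n}=1$ (from the diameter bound) and $c_{m,m}=0$. Here $d$ bounds $\|Tx_{m-1}-x_{n-1}\|$, which requires its own expansion of $x_{n-1}$. To close the system, I would follow Baillon--Bruck: define $c_{m,n}$ by the Bernoulli-type recursion
\[
c_{m,n}=(1-\alpha)c_{m,n-1}+\alpha\, \bigl[\text{bound on }\|x_m-Tx_{n-1}\|\bigr],
\]
and then expand $x_m$ as a convex combination. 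Concretely, I would use the exact representation $x_n-x_m=\sum$ (weights)$\cdot(Tx_j-Tx_i)$ plus terms involving $x_0$. The norm of each difference $Tx_j-Tx_i$ is at most $\ell\,\|x_j-x_i\|$, and each term involving $x_0$ is bounded by $\kappa$. By induction on $m+n$, this yields $\|x_m-x_n\|\le\kappa c_{m,n}$ for the minimal solution $c$ of a linear recursion with nonnegative coefficients. The resulting step probabilities are $(1-\alpha)$ (move one coordinate), $\alpha^2\ell$ (diagonal move), and so on.

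\textbf{Step 2: the Markov chain interpretation.} Interpret the recursion as a (sub-)Markov chain on $\ZZ^2$ started at $(m,n)$. Its steps are:
\begin{itemize}
\item a horizontal or vertical step with weight $1-\alpha$;
\item a diagonal step $(-1,-1)$ with weight $\alpha^2\ell$.
\end{itemize}
The boundary conditions are: absorbed with reward $1$ upon hitting the axis $\{m=0\}$ or $\{n=0\}$, and reward $0$ on the diagonal $m=n$. Then $c_{m,n}$ is the expected reward, i.e.\ the weighted count of paths from $(m,n)$ that reach an axis before touching the diagonal. Decomposing by the number $k$ of diagonal steps and the exit point $j$ on the axis, the weight of each path is $\alpha\,(\alpha^2\ell)^k(1-\alpha)^{n+m-1-j-2k}$. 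The final factor $\alpha$ accounts for the last step into the boundary.

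\textbf{Step 3: counting by reflection.} It remains to show that the number of admissible paths with $k$ diagonal steps ending at exit level $j$ equals $L^{k,j}_{m,n}$. Unrestricted paths are counted by $\binom{m}{k}\binom{n-1-j}{k}$, from choosing the positions of the diagonal steps among the horizontal and vertical moves. The reflection principle across the diagonal maps paths that touch the diagonal bijectively onto paths started from $(n,m)$, giving the subtracted term $\binom{n}{k}\binom{m-1-j}{k}$.

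\textbf{Main obstacle.} I expect the hardest step to be Step 1, namely finding the exact recursion whose coefficients make the lattice-path count come out as in \eqref{eq:latticepaths}. I would reverse-engineer the recursion from the formula, by checking that \eqref{eq:cmn} satisfies a Pascal-like identity, and then verify that the norm inequality induces precisely that recursion. The residual bound $\|x_m-Tx_m\|=\|x_m-x_{m+1}\|/\alpha\le\kappa c_{m,m+1}/\alpha$ follows immediately.
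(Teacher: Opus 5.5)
Your overall plan (a recursive bound, absorption probabilities, then a reflected lattice-path count) matches the paper's, but none of your three steps, as written, produces \eqref{eq:cmn}.

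In Step~1 the recursion is never pinned down, and what you do write is too lossy. First, the formula gives $c_{0,n}=1-(1-\alpha)^n$, not $1$. Second, the one-step splitting $x_m-x_n=(1-\alpha)(x_{m-1}-x_n)+\alpha(Tx_{m-1}-x_n)$ separates the $x_0$-contributions before the triangle inequality is applied. Already at $(m,n)=(1,2)$, even with the sharpest inputs $\|x_0-x_2\|\le(1-(1-\alpha)^2)\kappa$, $\|Tx_0-x_1\|=(1-\alpha)\|Tx_0-x_0\|\le(1-\alpha)\kappa$ and $\|x_0-x_1\|\le\alpha\kappa$, your inequality gives $\kappa\,[\alpha(1-\alpha)(3-2\alpha)+\alpha^3\ell]$. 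The theorem requires $c_{1,2}=\alpha(1-\alpha)+\alpha^3\ell$.

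The ``exact representation'' you mention is the missing idea, but it has to be made explicit. Write $x_m=\sum_{i=0}^m\pi_i^m Tx_{i-1}$ with $Tx_{-1}=x_0$, and use $\pi_i^m-\pi_i^n=\pi_i^m\sum_{j=m+1}^n\pi_j^n$. This gives exactly $x_m-x_n=\sum_{i=0}^m\sum_{j=m+1}^n\pi_i^m\pi_j^n(Tx_{i-1}-Tx_{j-1})$, and hence the recursion \eqref{eq:recursive}. Its transitions $(m,n)\to(i-1,j-1)$, with weight $\ell\,\pi_i^m\pi_j^n$, are long-range, not nearest-neighbour.

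Steps~2 and~3 then track the wrong statistic. A chain with weights $1-\alpha$ (horizontal), $1-\alpha$ (vertical) and $\alpha^2\ell$ (diagonal) solves $c_{m,n}=(1-\alpha)(c_{m-1,n}+c_{m,n-1})+\alpha^2\ell\,c_{m-1,n-1}$. The true $c_{m,n}$ instead satisfy \eqref{eq:recurrencia_corta}, whose diagonal coefficient $\alpha^2\ell-(1-\alpha)^2$ may be negative. Even with the correct boundary values, your chain yields $\alpha(1-\alpha)(2-\alpha)+\alpha^3\ell$ at $(1,2)$, overcounting by $(1-\alpha)^2c_{0,1}$. Its total weight $2(1-\alpha)+\alpha^2\ell$ can also exceed $1$.

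Likewise, if horizontal, vertical and diagonal steps are freely interleaved, unrestricted paths with $k$ diagonal steps are counted by multinomial coefficients, not by $\binom{m}{k}\binom{n-1-j}{k}$. That product counts monotone lattice paths by their number of \emph{turns}, and this is the structure the paper uses. Each long-range jump is realized as a geometric run of vertical unit steps followed by a geometric run of horizontal ones. So the weight $\alpha(\alpha^2\ell)^k(1-\alpha)^{n+m-1-j-2k}$ of a path from $(m,n)$ to $(-1,j)$ depends only on its number $k$ of left-to-down corners. After deleting the first and last unit steps and reflecting across $x=y$, these corners become east--north turns of paths from $(j,0)$ to $(n-1,m)$ staying weakly below the diagonal.

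The subtracted term then needs the turn-refined count of \cite[Theorem 10.14.1]{krattenthaler2015}. The classical reflection principle does not preserve the number of turns: it sends $ENNE$, which has one east--north turn, to $NEEE$, which has none.

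Your fallback of checking a Pascal-type identity is a legitimate alternative route, as the paper itself remarks. It works only against \eqref{eq:recurrencia_corta}, with its signed coefficient and the boundary $c_{0,n}=1-(1-\alpha)^n$, and it would replace Steps~2--3 rather than rescue them.
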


\noindent{\sc Remark.} In these formulas, and throughout the paper, we follow  Krattenthaler's convention \cite{krattenthaler2015} with $\binom{a}{k}=0$
 if $k$ is not in the range $0\le k \leq a$, and in particular if $a<0\leq k$. Hence, the inner sum in \eqref{eq:cmn} can be restricted to $0\le k\le \min\{m,n-1-j\}$. This avoids negative powers of $(1-\alpha)$ which are undefined for $\alpha=1$. Notice that the second
negative term in \eqref{eq:latticepaths} already vanishes for $k\geq m-j$. We also adopt the standard conventions $\binom{0}{0}=1$ and $0^0=1$.

\vspace{1ex}

Theorem \ref{thm:main} still holds if $C$ is unbounded, provided one has an {\em a priori} bound $\|x_0-Tx_m\|\leq\kappa$ for all $m\in\N$. In particular, this holds for $\kappa=(1+\lip)\|x_0-x^*\|$ if $T$ has a fixed point $x^*$,
and also with $\kappa=\|x_0-Tx_0\|+\mathop{\rm diam}(T(C))$ when $T$ has bounded range.

Hereafter we denote $\bm_m\triangleq \cm_{m,m+1}/\alpha$, so that the error bound reads $\|x_m-Tx_m\|\leq\kappa\,\bm_m$.
Using the  identities in Appendix \ref{App:Hypergeom}, which relate Gauss's $\Gauss$ and Appell's $\Fone$ hypergeometric functions, Proposition \ref{Thm:Alt_cmn} presents a first characterization for $\alpha\in (0,1)$
\begin{equation}\label{Eq:Rmbis}
\mbox{$\bm_m=(1-\alpha)^{2m}\Fone(-m;-(m+1),2;2;(\frac{\alpha}{1-\alpha})^2\lip,-\frac{\alpha}{1-\alpha})$},
\end{equation}
whereas for $\alpha=1$ formula \eqref{eq:cmn}
yields $\bm_m=\lip^m$.

The different plots in Figure \ref{figura4} show that for $\lip\approx 1$ there is a 
range of indices $m$ in which the bound $\kappa\,\bm_m$ is significantly smaller than the  coarse bound $\kappa\, \lip^m$ for  Banach--Picard. This comparison is not entirely fair since the prefactor $\|x_0-Tx_0\|$ in \eqref{eq:bound_p} can be much smaller than $\kappa$. However, if $\|x_0-Tx_0\|$ is a constant fraction of $\kappa$, \eqref{eq:km_a} may still achieve smaller error bounds
when $\lip\approx 1$. 
Note also that one can exploit the  faster initial decrease of the \eqref{eq:km_a} bound and later switch to Banach--Picard from a better starting point.

\begin{figure}[ht!]
  \centering
  \begin{minipage}{.3\linewidth}
    \centering
    \includegraphics[width=\linewidth]{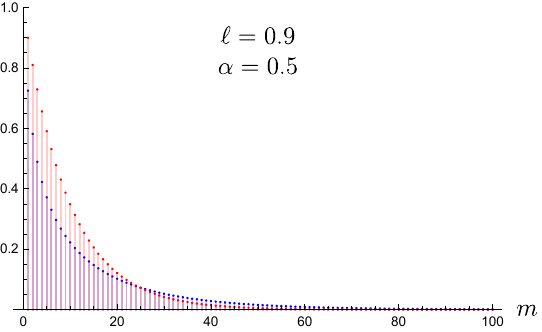}
  \end{minipage}
  \begin{minipage}{.3\linewidth}
    \centering
    \includegraphics[width=\linewidth]{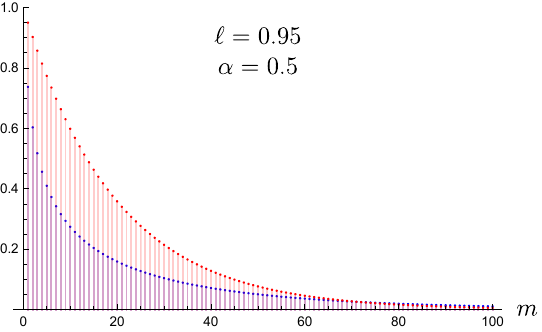}
  \end{minipage}
  \begin{minipage}{.3\linewidth}
    \centering
    \includegraphics[width=\linewidth]{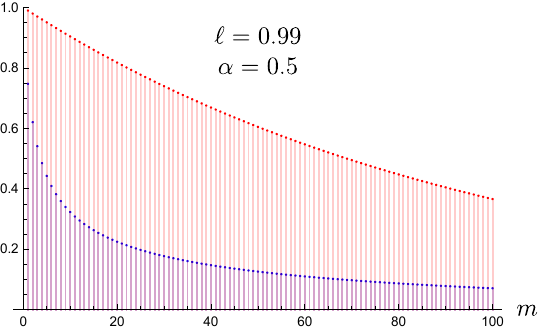}
  \end{minipage}

   \begin{minipage}{.3\linewidth}
    \centering
    \includegraphics[width=\linewidth]{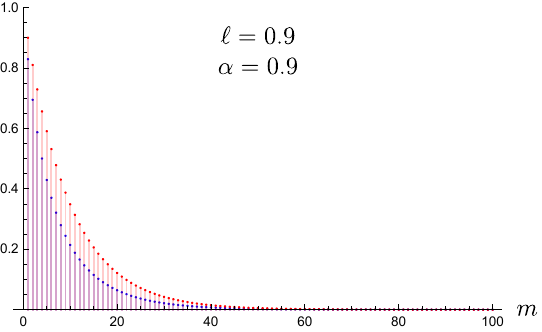}
  \end{minipage}
  \begin{minipage}{.3\linewidth}
    \centering
    \includegraphics[width=\linewidth]{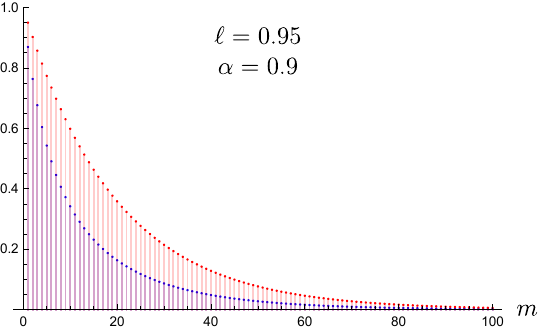}
  \end{minipage}
  \begin{minipage}{.3\linewidth}
    \centering
    \includegraphics[width=\linewidth]{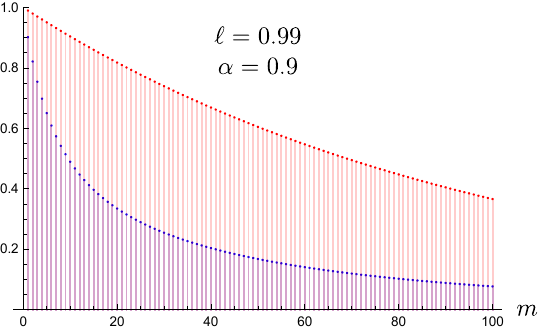}
  \end{minipage}
 
  \caption{
Krasnosel'skii--Mann bound $\kappa\,\bm_m$ (blue) and  Banach--Picard coarse  bound $\kappa\,\lip^m$  
  (red) as functions of the iteration 
 $m$, for $\kappa=1$ and different relaxation parameters $\alpha$ and contraction factors $\lip$. Already for 
 $\lip=0.9$ the \eqref{eq:km_a} bound improves upon  Banach–Picard's coarse bound, particularly with $\alpha=0.9$. The improvement becomes increasingly pronounced as the contraction factor  $\lip$ approaches 1. Note however that comparing the upper bounds does not exclude that for specific maps either method can outperform the other.}
  \label{figura4}
\end{figure}

We stress that, except for the initial recursion that uses the triangle inequality to define the $\cm_{m,n}$'s, the identities \eqref{eq:cmn} and \eqref{Eq:Rmbis}, as well as the subsequent representations, are obtained as equalities. Nevertheless, $\bm_m$ is  just an upper bound for the residuals. In order to evaluate how sharp this bound is, in \S{}\ref{Sec:optimal_transport} we compare $\bm_m$ with a known minimax-tight bound $\Rtight_m$ derived from a family of  optimal transport problems (see \cite{bc18,bravo2022universal,bravo_cominetti_lee2026halpern}). 
For $\alpha\in [\frac12, 1]$, Proposition \ref{cmndmn} establishes the explicit absolute-gap estimate  
$\bm_m-\Rtight_m\leq (1-\alpha)^2(m+1)\lipalpha^m/\alpha$. The numerical results in Appendix 
\ref{AppE_Numerical} suggest substantially
smaller absolute gaps and also small relative gaps $\bm_m/\Rtight_m$, with the advantage that $\bm_m$ admits explicit analytic formulas whereas $\Rtight_m$ is defined by an involved recursion.

While the previous formulas for $\cm_{m,n}$ and $\bm_m$ have a nice probabilistic interpretation, rooted in the combinatorial structure behind \eqref{eq:km_a}, their polynomial 
nature makes them difficult to analyze.
Section \ref{sec:3} derives alternative probabilistic and integral representations for the $c_{m,n}$'s, more amenable to deriving upper bounds and asymptotic rates,  as well as for determining an $\alpha$ that minimizes $\bm_m$. Namely, considering two independent counting processes $X_n=B_1+\ldots+B_n$ with
$B_i\sim\mathop{\rm Bernoulli}(\alpha)$, and $Y_n=D_1+\cdots+D_n$ with $D_i\sim\mathop{\rm Bernoulli}(r)$ where $r=\alpha\,\lip/\lipalpha$, Proposition \ref{Prop4} shows that
\begin{equation*}
\cm_{m,n}
=
\lipalpha^{\hspace{0.3ex}m}\,\PP(X_n\!>\! Y_m)
-
\lipalpha^{\hspace{0.2ex}n}\,\PP(X_m\!>\!Y_n).
\end{equation*}
Then, using Cauchy's residue theorem, in Proposition \ref{prop:int} we derive the integral formula
 \begin{equation*}
\cm_{m,n}
=
\lipalpha^{\hspace{0.3ex}m}-\lipalpha^{\hspace{0.2ex}n}
+
\frac{2\sigma}{\pi}
\int_0^\pi
\frac{
r_{\!\alpha}(\theta)^{m+n}
\sin\!\bigl((n\!-\!m)\,\varphi_{\!\alpha}(\theta)\bigr)\sin\theta
}{
1-2\sigma\cos\theta+\sigma^2
}\,d\theta,
\end{equation*}  
 where $r_{\!\alpha}(\theta)\triangleq |z_{\alpha}(\theta)|$ and $\varphi_{\!\alpha}(\theta)\triangleq\arg(z_{\alpha}(\theta))$ with $z_{\alpha}(\theta)=1-\alpha+\alpha\,\sigma\, e^{\ii\theta}$. We recall that $\sigma=\sqrt{\lip}$.
 This leads to our second main result.
\begin{theorem}\label{Prop6}
Let $r_{\!\alpha}(\theta)=|z_{\alpha}(\theta)|$ as above. Then, for all $m\geq 0$ we have
\begin{equation}\label{eq:cmmp1_Chebyshev00}
\bm_{m}
=
(1-\lip)\,\lipalpha^{\hspace{0.3ex}m}
+
\frac{2\lip}{\pi}
\int_0^\pi
\frac{r_{\!\alpha}(\theta)^{2m}\sin^2\theta}{1-2\sigma\cos\theta+\sigma^2}\;d\theta.
\end{equation}
Moreover, denoting 
$\salpha\triangleq (1-\alpha+\alpha\,\sigma)^2$ and $\eta\triangleq 4\alpha(1\!-\!\alpha)\sigma/\salpha$, we also have
\begin{equation}\label{eq:cmmp1_Chebyshev2}
\bm_{m}=(1-\lip)\,\lipalpha^{\hspace{0.3ex}m}+
\left(\frac{8\lip}{\pi}
\int_0^1
\frac{\sqrt{x(1-x)}\;(1-\eta\,x)^{m}}{(1-\sigma)^2+4\sigma\,x}\,dx\right) \salpha^{\hspace{0.2ex}m},
\end{equation}
 which can be expressed using Euler’s integrals 
\eqref{Eq:Euler_2F1}-\eqref{Eq:Euler_F1}, with $\xi\triangleq 4\sigma/(1\!-\!\sigma)^2$ when $\lip<1$,  as
\begin{equation}\label{eq:cmmp1_Chebyshev5}
\bm_{m}
=\left\{\begin{array}{ll}
(1-\lip)\,\lipalpha^{\hspace{0.3ex}m}
+
\frac{\lip}{(1-\sigma)^2}\,\Fone\big(\mbox{$\frac{3}{2}$};-m,1;3;\eta,-\xi\big)\,\salpha^{\hspace{0.2ex}m}&\text{ if }\lip<1,\\[2ex]
{}\Gauss\!\left(\mbox{$\frac12$},-m;2;4\alpha(1\!-\!\alpha)\right)&\text{ if }\lip=1.
\end{array}\right.
\end{equation}
\end{theorem}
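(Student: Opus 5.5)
Starting from the integral formula of Proposition \ref{prop:int}, take $n=m+1$ and divide by $\alpha$. First, $\lipalpha^m-\lipalpha^{m+1}=\lipalpha^m(1-\lipalpha)=\alpha(1-\lip)\lipalpha^m$, so division by $\alpha$ gives the term $(1-\lip)\lipalpha^m$. In the integral we have $r_\alpha^{2m+1}\sin\varphi_\alpha=r_\alpha^{2m}\,\mathrm{Im}\,z_\alpha(\theta)=r_\alpha^{2m}\alpha\sigma\sin\theta$. Hence the integrand becomes $\alpha\sigma\, r_\alpha^{2m}\sin^2\theta/(1-2\sigma\cos\theta+\sigma^2)$. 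The prefactor is $2\sigma/\pi$, and after dividing by $\alpha$ it becomes $2\sigma^2/\pi=2\lip/\pi$. This gives \eqref{eq:cmmp1_Chebyshev00} directly; the case $m=0$ is included.

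For \eqref{eq:cmmp1_Chebyshev2}, compute
\[
r_\alpha(\theta)^2=(1-\alpha)^2+2\alpha(1-\alpha)\sigma\cos\theta+\alpha^2\sigma^2 .
\]
Substitute $x=(1-\cos\theta)/2$, so that $\cos\theta=1-2x$, $\sin^2\theta=4x(1-x)$ and $d\theta=dx/\sqrt{x(1-x)}$. Then
\[
r_\alpha^2=(1-\alpha+\alpha\sigma)^2-4\alpha(1-\alpha)\sigma x=\salpha(1-\eta x),
\]
and $1-2\sigma\cos\theta+\sigma^2=(1-\sigma)^2+4\sigma x$. The measure transforms as $\sin^2\theta\,d\theta=4\sqrt{x(1-x)}\,dx$, and the constant $2\lip/\pi$ becomes $8\lip/\pi$. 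This yields \eqref{eq:cmmp1_Chebyshev2}.

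For \eqref{eq:cmmp1_Chebyshev5} with $\lip<1$, factor $(1-\sigma)^2$ out of the denominator to get $(1-\sigma)^{-2}(1+\xi x)^{-1}$. The remaining integral $\int_0^1 x^{1/2}(1-x)^{1/2}(1-\eta x)^m(1+\xi x)^{-1}dx$ is Euler's integral \eqref{Eq:Euler_F1} for $\Fone(a;b_1,b_2;c;\eta,-\xi)$ with $a=3/2$, $c-a=3/2$ (so $c=3$), $b_1=-m$, $b_2=1$. Its normalizing constant is $B(3/2,3/2)=\Gamma(3/2)^2/\Gamma(3)=\pi/8$. This cancels the $8/\pi$ and leaves $\lip/(1-\sigma)^2$.

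For $\lip=1$, we have $\sigma=1$, $\salpha=1$, $\eta=4\alpha(1-\alpha)$, and the first term vanishes. The denominator $4x$ cancels one power of $x$, leaving
\[
\frac{2}{\pi}\int_0^1 x^{-1/2}(1-x)^{1/2}(1-\eta x)^m\,dx .
\]
This is Euler's integral \eqref{Eq:Euler_2F1} with $b=1/2$, $c-b=3/2$ (so $c=2$) and $a=-m$. Its normalizing constant is $B(1/2,3/2)=\pi/2$, which gives $\Gauss(\tfrac12,-m;2;4\alpha(1-\alpha))$.

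The main obstacle is bookkeeping rather than any conceptual step. One must check that the hypotheses of the Euler integrals are met: $\mathrm{Re}\,c>\mathrm{Re}\,a>0$ holds, and $-\xi<0$ and $\eta\le1$ keep the integrands free of singularities on $[0,1]$. To verify $\eta\le1$, note that $\salpha-4\alpha(1-\alpha)\sigma=((1-\alpha)-\alpha\sigma)^2\ge0$. One must also handle $\alpha=1$, where $\eta=0$ and the formulas degenerate consistently to $\lip^m$. The limiting case $\sigma\to1$ is handled separately because $\xi$ blows up, which is why the $\lip=1$ case is computed directly rather than as a limit.
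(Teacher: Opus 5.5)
Your proposal is correct and follows the paper's proof essentially step by step: you specialize Proposition~\ref{prop:int} to $n=m+1$ using $r_{\!\alpha}(\theta)\sin\varphi_{\!\alpha}(\theta)=\alpha\sigma\sin\theta$, substitute $x=\sin^2(\theta/2)$ with $r_{\!\alpha}(\theta)^2=\salpha(1-\eta x)$, and match Euler's integrals with normalizing constants $8/\pi$ and $2/\pi$. Your labeling of the $\Gauss$ Euler integral swaps the roles of $a$ and $b$ relative to \eqref{Eq:Euler_2F1}, which is harmless because $\Gauss$ is symmetric in its first two parameters.
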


The formula \eqref{eq:cmmp1_Chebyshev5} is continuous at $\lip=1$,  thereby answering our original question: the bound $\bm_m$ decays geometrically when $\lip<1$, and smoothly recovers the nonexpansive bound \eqref{eq:BB_constant} as $\lip\to 1$. Note also that for $\alpha=1$ these expressions collapse to $\bm_m=\lip^m$, matching the Banach--Picard bound \eqref{eq:bound_p} with $\|x_0-Tx_0\|$ bounded above by $\kappa$.

The integral formulas \eqref{eq:cmmp1_Chebyshev00}-\eqref{eq:cmmp1_Chebyshev2} are more tractable than \eqref{Eq:Rmbis}: they readily provide simpler upper bounds and asymptotic rates (Corollary \ref{prop7}), and allow us to optimize $\bm_m=\bm_m(\alpha)$ as a function of $\alpha$ for a fixed number  
of iterations $m$.
In \S{}\ref{sec:5} we show that for each $m\geq 1$ there is a unique minimizer 
$\alpha_\lip(m)$ and we analyze its asymptotics for $m\to\infty$, as well as its dependence on $\lip$.  For $\lip\leq\frac12$ we have $\alpha_\lip(m)\equiv 1$ for all $m$ which yields a standard Banach--Picard iteration, whereas
for  $\lip\in(\frac12,1)$ we have  $\alpha_\lip(m)\in(\frac12,1)$ with $\alpha_\lip(m)\approx 1-c^*_\lip/m\to 1$ as $m\to\infty$, for a specific constant $c^*_\lip>0$. In contrast, for fixed  $m$ we have $\alpha_\lip(m)\to\alpha_1(m)=\frac12$ as $\lip\uparrow 1$.

Building upon the previous results, the final Section \ref{sec:6} considers the case of inexact Krasnosel'skii--Mann iterations in which the operator values $Tx_m$ can only be computed up to an additive error $e_m$. For this case, Theorem \ref{Thm12} establishes a finite-time error bound in the form of a discrete convolution between the sequences $\bm_m$ and $\|e_m\|$.

\subsection{Related work}

The iteration \eqref{eq:km_a} is the special case of the more general scheme
\begin{equation}\label{eq:km_b}
x_{m+1}=(1-\alpha_{m})\,x_m+\alpha_{m} Tx_m,
\end{equation}
with nonconstant $\alpha_m\in (0,1]$.
This iteration has a rich history, mainly in the nonexpansive case $\lip=1$, starting with the landmark works of \citet{krasnosel1955two} and  \citet{mann1953mean} in the early 1950s. For the specific case $\alpha_m\equiv\alpha =\frac12$, on uniformly convex  spaces and assuming  $T(C)$ precompact,
Krasnosel'skii established the strong convergence of the iterates towards a fixed point. This was extended by \citet{Schaefer1957} to all $\alpha \in (0,1)$ and by \citet{edelstein1966remark} to strictly convex spaces (see also \citet{outlaw1969mean}). The extension to general Banach spaces was achieved by \citet{ishikawa1976fixed} with $\alpha_m$ bounded away from 1 and $\sum_{m\geq 0}\alpha_m=\infty$, and also by  \citet{edelstein1978nonexpansive} for constant $\alpha_m\equiv\alpha$. 
Without compactness, but assuming
the existence of fixed points and $\sum_{m\geq 0}\alpha_m(1\!-\!\alpha_m)=\infty$,  \citet{reich1979weak} proved weak convergence of the iterates in uniformly convex spaces with a smooth norm.

The crucial step in all these convergence results
was to show that $\|x_m-Tx_m\|\to 0$, a property that was dubbed {\em asymptotic regularity} and established under different assumptions by \citet{browder1966solution}, \citet{dotson1970mann}, \citet{groetsch1972note}, \citet{baillon1978asymptotic}, \citet{goebelkirk1983},  \citet{reich1990nonexpansive} and \citet{borwein1992krasnoselski}.  This sequence of results culminated in an
explicit metric estimate by \citet{baillon1992optimal},
who conjectured the existence of a universal constant
$c$ such that
$$\|x_m-Tx_m\|\leq \frac{c\,\mathop{\rm diam}(C)}{\sqrt{\sum_{i=1}^m\alpha_i(1-\alpha_i)}}.$$
Soon after, \citet{baillion1996rate} proved that for  $\alpha_m\equiv\alpha$ this holds with $c=1/\sqrt{\pi}$. The general case with nonconstant parameters $\alpha_m$ was confirmed in \citet{csv14} with a different proof but the same constant $1/\sqrt{\pi}$, while \citet{bc18} later showed that this constant is tight. 

 \citet{halpern1967fixed} introduced a similar iteration $x_{m+1}=(1-\beta_m)x_0+\beta_m Tx_m$ with $\beta_m\uparrow 1$.
It has been studied extensively, with a renewed  recent interest focused on error bounds, lower bounds, and optimal $\beta_m$'s. For nonexpansive maps, a suitable choice of $\beta_n$ achieves $\|x_m-Tx_m\|\leq 4 \kappa/(m+1)$, which decays faster than the order $\Theta(1/\sqrt{m})$ characteristic of Krasnosel'skii--Mann 
(see \citet{sabach_shtern2017first},  \citet{lieder2021convergence},  \citet{bravo2022universal}). As shown in \citet{contreras2023optimal},  the  optimal choice is to set recursively $\beta_{m+1}=(1+\beta_m^2)/2$ with $\beta_0=0$, which yields a slightly 
smaller and minimax-tight bound.
 For strict contractions, the optimal Halpern 
 iteration was obtained by  \citet{park2022exact} in Hilbert spaces, and recently by  \citet{bravo_cominetti_lee2026halpern} 
 in normed spaces.
 The latter builds on ideas from \cite{bc18,bravo2022universal} exploiting a family of tight recursive bounds based on optimal transport, which will be further discussed later in connection with \eqref{eq:km_a}.

\section{Recursive estimates and error bounds}
\label{sec:2}

This section establishes the estimates  $\|x_m-x_n\|\leq\kappa\, \cm_{m,n}$ in Theorem \ref{thm:main} for the distance between iterates. By symmetry, it suffices to consider $m\leq n$ with $c_{n,n}=0$. This is the basic building block for all subsequent results,
providing in particular the fixed point residual bound $\|x_m-Tx_m\|\leq \kappa\,\bm_m$  with $\bm_m\triangleq \cm_{m,m+1}/\alpha$. We will also compare these estimates with the known minimax optimal bounds 
 obtained through a sequence of recursive optimal transport problems,
 revealing that the $\bm_m$'s not only admit a closed form analytic expression but they track closely the minimax-tight bounds. In the subsequent sections \S\ref{sec:3} and \S{}\ref{sec:4} we will present alternative formulas for the quantities $\cm_{m,n}$ and the bounds $\bm_m$, providing more manageable error bounds and asymptotic rates.

We split the proof of Theorem \ref{thm:main} into several auxiliary lemmas.
We begin by stating a family of recursive bounds $\cm_{m,n}$. These $\cm_{m,n}$'s are then interpreted as absorption probabilities for a special Markov chain in $\ZZ^2$, from which we derive the explicit analytic form \eqref{eq:cmn}-\eqref{eq:latticepaths}. Although our main interest is in the contractive case $\lip<1$, the analysis also applies to nonexpansive maps with $\lip=1$, providing alternative expressions for the results in \citet{baillion1996rate}.

\subsection{Recursive estimates}\label{sec:recursive}
A useful way to analyze the \eqref{eq:km_a} iterates is to express them as convex combinations of the previous images: considering the weights $\pi_0^m=(1-\alpha)^m$ and
$\pi_i^m= \alpha\,(1-\alpha)^{m-i}$ for  $1\leq i\leq m$, 
and adopting the convention $Tx_{-1}=x_0$, we have
\begin{equation*}
 x_m= \sum_{i=0}^m \pi_i^m\, T x_{i-1}.
\end{equation*}
If we already have  bounds $\norm{x_i-x_j}\leq \kappa\,\cm_{i,j}$ for $0\leq i<j<n$, the $\lip$-Lipschitz property of $T$ together with  the identity $\pi_i^m-\pi_i^n=\pi_i^m\sum_{j=m+1}^n\pi_j^n$ for $0\le i\le m$ and the coarse estimate $\norm{x_0-Tx_{j-1}}\leq\kappa$, allow us to derive a  valid bound for $\norm{x_m-x_n}$ as
 \begin{eqnarray*}
    \norm{x_m-x_n}&=&\norm{\mbox{$\sum_{i=0}^m\pi_i^m\,Tx_{i-1}-\sum_{j=0}^n\pi_j^n\,Tx_{j-1}$}}\\
     &=&\norm{\mbox{$\sum_{i=0}^m\sum_{j=m+1}^n\pi_i^m \pi_j^n(Tx_{i-1}-Tx_{j-1})$}}\\
&\leq&\kappa\,\mbox{$(\pi_0^m -\pi_0^n) + \kappa\sum_{i=1}^m\sum_{j=m+1}^n\lip\,\pi_i^m \pi_j^n\cm_{i-1,j-1}$}.
\end{eqnarray*}
Thus, we obtain $\norm{x_m-x_n}\leq \kappa\,\cm_{m,n}$ by  defining recursively
\begin{equation}\label{eq:recursive}
(\forall \, 0\leq m\leq n)\qquad\cm_{m,n}\triangleq \pi_0^m-\pi_0^n + \sum_{i=1}^m\sum_{j=m+1}^n\!\!\lip\,\pi_i^m \pi_j^n\cm_{i-1,j-1}.
\end{equation} 
To prove Theorem~\ref{thm:main} we will interpret this recursion as absorption probabilities 
for a particular Markov chain, and then show that
the explicit solution  is given by
\eqref{eq:cmn}-\eqref{eq:latticepaths}. 

\vspace{1.5ex}
\noindent{\sc Remarks.}\\[0.5ex]
$a)$  We have $\cm_{n,n}=0$ and  $\cm_{0,n}=1-\pi_0^n$. Starting from this, one can show that \eqref{eq:recursive} is equivalent to the simpler recursion for $1\leq m< n$
\begin{equation}\label{eq:recurrencia_corta}
\cm_{m,n}=(1\!-\!\alpha)\,\big(\cm_{m,n-1}+\cm_{m-1,n}\big)+\big(\lip\,\alpha^2-(1\!-\!\alpha)^2\big)\,\cm_{m-1,n-1}.
\end{equation}
$b)$ The estimates $\|x_m\!-x_n\|\le\kappa\,\cm_{m,n}$ remain valid for maps defined on unbounded domains, provided that one has an {\em a priori} bound $\|x_0-Tx_m\|\leq\kappa$. In particular, this holds when $T$ has a bounded range
with $\kappa=\|x_0-Tx_0\|+\mathop{\rm diam}(T(C))$, or when $T$ has a fixed point $x^*$ with $\kappa=(1+\lip)\|x_0-x^*\|$.\\[0.5ex]
$c)$ 
Although we present the results for normed spaces, they extend to geodesic spaces such as CAT(0) and Busemann  spaces, and more generally to convex metric spaces in the sense of Takahashi (see Appendix \ref{App:ConvexMetricSpace}). If $T:C\to C$ is $\lip$-Lipschitz with $C\subseteq X$ bounded and geodesically convex,  the iterates 
$x_{m+1}=(1-\alpha)\,x_m\oplus\alpha\, Tx_m$ defined by geodesic averages instead of convex combinations, satisfy the same recursive bounds $d(x_m,x_n)\leq\kappa\, \cm_{m,n}$, hence the subsequent analysis remains valid (except the results on additive errors which require a linear structure). For completeness we include the proof in Appendix \ref{App:ConvexMetricSpace}; a simple adaptation of 
\citet[Proposition 4.4]{foglia_colao_2025_cat0}.

\subsection{An absorbing Markov chain}\label{Sec:2.2}
Consider a Markov chain with state space $\mathcal S=\D\cup\{\st,\ts\}$ with $\D\triangleq \{(m,n)\in \mathbb Z^2: 0\leq m\le n \}$ and two auxiliary absorbing states $\st$ and $\ts$. The transition probabilities are $P(\st;\st)=P(\ts;\ts)=1$,
while from $(m,n)\in\D$ we take (notice the shift of indices from $i-1$ to $i$ and $j-1$ to $j$)
\begin{equation}\label{Eq:probP}
\begin{cases}
  P((m,n); (i,j))= \lip \,\pi_{i+1}^m \pi_{j+1}^n \qquad\text{for } i=0,\ldots, m-1\text{ and } j=m,\ldots,n-1\\
   P((m,n);\ts)=\pi_{0}^m -\pi_{0}^n\\
P((m,n);\st)=\mbox{$1-(\pi_{0}^m -\pi_{0}^n)-\lip\sum_{i=0}^{m-1}\sum_{j=m}^{n-1}\pi_{i+1}^m\pi_{j+1}^n$}.
\end{cases}
\end{equation}
Notice that, since $\sum_{j=m}^{n-1}\pi_{i+1}^m\pi_{j+1}^n=\pi_{i+1}^m\!-\pi_{i+1}^n$ and $\lip\leq 1$, we have $P((m,n);\st)\geq \sum_{i=0}^{m}\pi_{i}^n\geq 0$.
Moreover, for $n=m$ we have $P((m,m);\st)=1$ and the chain is immediately absorbed at $\st$, while for $m=0<n$ we have $P((0,n);\st)=\pi_0^n$ and  $P((0,n);\ts)=1-\pi_0^n$.

This chain can be interpreted as a game between a vertical player $V$ and a horizontal player $H$, whose joint positions describe a random path in $\ZZ^2$ (see Figure~\ref{fig:1a}). 
The game proceeds in {\em phases} in which players move alternately performing a sequence of Bernoulli steps with continuation probabilities $\beta\triangleq 1-\alpha$. Absorption at $\st$ is encoded by states  $(m,n)$  below the diagonal with $n<m$, and absorption at $\ts$  by states of the form $(-1,n)$. The game proceeds as follows.

\vspace{2ex}

\begin{itemize}\itemsep=2ex
\item At the beginning of a phase the players are located at some state $(m,n)\in \D$. First,  player  $V$ moves down by 
a sequence of independent Bernoulli unit steps with continuation probabilities $\beta$, except from $(m,0)$ to $(m,-1)$ which stops with probability 1. With probability
    $\pi_{j+1}^n$, player $V$ stops at height $j$ just below the first failure.
    If $V$ lands below the diagonal $j<m$, she wins and the game ends (absorption at $\st$).
   
\item If $V$'s new position is on  the diagonal or above, that is  $j\geq m$, the phase continues with player $H$ moving left by a sequence of Bernoulli 
steps with continuation probabilities $\beta$, except from $(0,j)$ to $(-1,j)$ which again stops with probability 1. With probability $\pi_{i+1}^m$, player $H$ stops at the position $i$ after the first failure. In the event $i=-1$ player $H$ wins  and the game ends (absorption at $\ts$, at level $j$).
    
\item If the game has not ended, the players are now located at $(i, j)$ with $0\leq i < j$. At this point, player $V$ gets a second chance: a Bernoulli variable $B$ with parameter $\lip \in (0,1]$ is drawn. If $B = 0$ the game ends and $V$ is declared the winner, 
otherwise the current phase is called {\em unsettled} and a new phase starts from  $(i, j)$.
\end{itemize}

\begin{figure}[ht]
    \centering
    \begin{subfigure}[b]{0.49\textwidth}
        \centering
    \begin{tikzpicture}[scale=0.5,every node/.style={scale=0.6}]
  \draw[step=1cm,gray,very thin] (-2,-2) grid (10,10); 
 \draw[thick,->] (0,0) -- (10,0);
 \draw[thick,->] (0,0) -- (0,10);
 \foreach \i in {0,1,...,8}{
	\pgfmathsetmacro{\Start}{\i+1}
 	\foreach \j in {\Start,...,9} 
 	\draw[black,fill=black] (\i,\j) circle (0.05cm);
}

\draw[dashed,-] (-2,-2) -- (10,10);
\node at (7,9.65) {$(m,n)$};
\node at (-1,2.35) { $(-1,j)$};
\draw[ultra thick,-stealth,color=blue] 
  (7,9)--(7, 7) -- (5, 7) 
  -- (5, 6) 
  -- (1, 6) 
  -- (1,3)
  --(-1,3);
  \draw[ultra thick,-stealth,color=red] 
  (1, 6) 
  -- (1,3)
  --(-1,3);
\draw[black,fill=blue] (5,7) circle (0.15cm);
\draw[black,fill=blue] (1,6) circle (0.15cm);
\draw[black,fill=red] (-1,3) circle (0.15cm);
\end{tikzpicture}
\caption{ $H$-winning path from $(m,n)=(7,9)$, with $k=2$ unsettled phases (blue) and final phase absorbed at level $j=3$ (red). Fat dots mark the end of a phase. The probability of this path is $\alpha\,(\lip\, \alpha^2)^2\, (1-\alpha)^{8}$.}
 \label{fig:1a}
    \end{subfigure}
    \hfill
     \begin{subfigure}[b]{0.49\textwidth}
        \centering
      \begin{tikzpicture}[scale=0.5,every node/.style={scale=0.6}]
  \draw[step=1cm,gray,very thin] (-2,-2) grid (10,10); 
 \draw[thick,->] (0,0) -- (10,0);
 \draw[thick,->] (0,0) -- (0,10);
 \foreach \i in {0,1,...,8}{
	\pgfmathsetmacro{\Start}{\i+1}
 	\foreach \j in {\Start,...,9} 
 	\draw[black,fill=black] (\i,\j) circle (0.05cm);
}
 \foreach \i in {0,1,...,8}{
	\pgfmathsetmacro{\Start}{\i+1}
 	\foreach \j in {\Start,...,9} 
 	\draw[black,fill=black] (\j,\i) circle (0.05cm);
}

\draw[dashed,-] (-2,-2) -- (10,10);
\node at (7,9.65) {$(m,n)$};
\node at (-1,2.35) { $(-1,j)$};
\draw[ultra thick,dotted]  (7,9)--(7, 8);
\draw[ultra thick] 
(7, 8) --(7,7)-- (5, 7) 
  -- (5, 6) 
  -- (1, 6) 
  -- (1,3)
  --(0,3);
  \draw[ultra thick, dotted,-{stealth}] (0,3)--(-1,3);
  \node at (8.35,6.45) {$(n\!-\!1,m)$};
\node at (3,-0.35) { $(j,0)$};
\draw[ultra thick,color=black!70,-{stealth}] 
--(3,0)-- (3,1)--(6, 1)-- (6, 5)--(7,5)--(7,7)--(10,7);
\draw[black,fill=black] (5,7) circle (0.15cm);
\draw[black,fill=black] (1,6) circle (0.15cm);
\draw[black,fill=black] (-1,3) circle (0.15cm);
\draw[black,fill=gray!50] (7,5) circle (0.15cm);
\draw[black,fill=gray!50] (6,1) circle (0.15cm);

\end{tikzpicture}
        \caption{Reflecting the path across the diagonal $x=y$, 
        and ignoring the first and last steps, yields a lattice path below the diagonal (grey) from $(j,0)=(3,0)$ to $(n-1,m)=(10,7)$  with $k=2$ east-north turns.}
         \label{fig:1b}
    \end{subfigure}
    \caption{An $H$-winning path and its reflected lattice path}
    \label{fig:1}
\end{figure}

\begin{lemma} \label{lem:interp2}
For all $0\leq m< n$ we have
$\cm_{m,n}= \mathbb P \left ( \text{\normalfont $H$ wins $|\,s_0=(m,n)$} \right )$.
\end{lemma}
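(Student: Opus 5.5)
Let $h_{m,n}\triangleq \PP(\text{$H$ wins}\mid s_0=(m,n))$ for $(m,n)\in\D$; I will show that $h$ satisfies the recursion \eqref{eq:recursive} and that this recursion has a unique solution.

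\emph{Step 1: $h$ is a solution.} Condition on the first transition out of $(m,n)$. From \eqref{Eq:probP}:
\begin{itemize}
\item the chain jumps directly to $\ts$ with probability $\pi_0^m-\pi_0^n$, and $H$ wins;
\item it jumps to $\st$ with the complementary probability, and $H$ loses;
\item it moves to $(i,j)$ with $0\le i\le m-1$ and $m\le j\le n-1$ with probability $\lip\,\pi_{i+1}^m\pi_{j+1}^n$.
\end{itemize}
First-step analysis for absorption probabilities therefore gives
\[
h_{m,n}=\pi_0^m-\pi_0^n+\sum_{i=0}^{m-1}\sum_{j=m}^{n-1}\lip\,\pi_{i+1}^m\pi_{j+1}^n\,h_{i,j}.
\]
Shifting $i\to i-1$ and $j\to j-1$ turns this into exactly \eqref{eq:recursive}. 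The boundary cases also agree: $h_{n,n}=0=\cm_{n,n}$, since $(n,n)$ is sent to $\st$ with probability $1$. For targets $(i,j)$ with $i<j$ we are in $\D$, and when $i=j=m$ the transition is still covered, since $c_{m,m}=h_{m,m}=0$.

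Two points need checking here. The first is that $P$ is a genuine transition kernel, i.e.\ the probability of jumping to $\st$ is nonnegative; this is the remark after \eqref{Eq:probP}. The second is that the one-step kernel of \eqref{Eq:probP} matches the verbal phase description, although the recursion only needs \eqref{Eq:probP}.

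\emph{Step 2: uniqueness.} Every non-absorbing transition from $(m,n)$ goes to some $(i,j)$ with $i<m$ and $j<n$, so both coordinates strictly decrease. Hence the chain is absorbed after at most $m+1$ steps, and $h$ is well defined. The same observation shows that \eqref{eq:recursive} determines $\cm_{m,n}$ uniquely by induction on $m+n$ (or on $m$), starting from $\cm_{0,n}=1-\pi_0^n=h_{0,n}$ and $\cm_{n,n}=0$. Since $h$ and $\cm$ satisfy the same recursion with the same base values, a strong induction on $m+n$ gives $h_{m,n}=\cm_{m,n}$ for all $0\le m<n$.

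\emph{Main obstacle.} The only real care is the index bookkeeping. One has to handle the diagonal targets $(m,m)$ and the range $j\ge m$ versus $j\ge m+1$ after the shift, and make sure these align with the sums in \eqref{eq:recursive} and the conventions on $\pi_0$. The probabilistic content is just first-step analysis on a chain whose transient part is strictly decreasing.
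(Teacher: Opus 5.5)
Your proposal is correct and is essentially the paper's proof: conditioning on the first jump shows that the absorption probability at $\ts$ satisfies the recursion \eqref{eq:recursive} with the same initial values $1-\pi_0^n$. Uniqueness then follows from the strict decrease of the first coordinate, which the paper leaves implicit and you spell out. One harmless slip: after the shift every target satisfies $i\le m-1<m\le j$, so diagonal targets never arise, and your remark that $i=j=m$ is ``still covered'' is unnecessary.
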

\begin{proof}
    Let $p_{m,n}$ be the probability that the chain is absorbed at the $H$-winning state $\ts$, when starting from $s_0=(m,n)\in\D$. By
conditioning on the first jump it follows that $p_{m,n}$ satisfies exactly the same recursion \eqref{eq:recursive}, with $p_{0,n}=1-\pi_0^n=\cm_{0,n}$, and therefore $\cm_{m,n}=p_{m,n}$ for all $(m,n)\in\D$.
\end{proof}

\subsection{Stratification of \texorpdfstring{$H$}{Lg}-winning paths}
\label{sec:stratification}
By partitioning the set of all $H$-winning paths
into the subsets $\mathcal{P}^{k,j}_{\!m,n}$ of 
paths from $(m,n)\in\D$ to a specific $H$-winning position $(-1,j)$ and which have exactly $k$ unsettled phases, we have
\begin{equation}\label{eq:sumprob}
\cm_{m,n}=\sum_{j=0}^{n-1} \sum_{k=0}^m \PP(\mathcal{P}^{k,j}_{\!m,n}).
\end{equation}
Note that each unsettled phase moves at least one unit down and at least one unit left, so that for any fixed $j$ the number of unsettled phases is at most $k\leq \min\{m,n-1-j\}$, and for larger $k$ the set $\mathcal{P}^{k,j}_{\!m,n}$ is empty.
The crucial observation is that all paths  in  $\mathcal{P}^{k,j}_{\!m,n}$ have exactly the same probability.
\begin{lemma}\label{prop:proba}
     Let $0\leq m<n$ and $0\leq k\leq \min\{m,n-1-j\}$ with $0\le j\le n-1$. Then, the probability of any path in $\mathcal{P}^{k,j}_{\!m,n}$ is exactly $\alpha \,(\alpha^2\lip)^k\,(1\!-\!\alpha)^{n+m-1-j-2k}$. 
\end{lemma}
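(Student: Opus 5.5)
The plan is to decompose a path in $\mathcal{P}^{k,j}_{\!m,n}$ into its $k+1$ phases and multiply the transition probabilities phase by phase. The product should telescope into a single power of $(1-\alpha)$ that depends only on the endpoints $(m,n)$ and $(-1,j)$ and on $k$.

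Write the path as $(m,n)=(i_0,j_0)\to(i_1,j_1)\to\cdots\to(i_k,j_k)$, followed by absorption at $\ts$ at level $j$. Here $0\le i_{t+1}<i_t$ and $i_t\le j_{t+1}<j_t$ for each unsettled phase. By \eqref{Eq:probP}, the unsettled transition $(i_t,j_t)\to(i_{t+1},j_{t+1})$ has probability $\lip\,\pi^{i_t}_{i_{t+1}+1}\pi^{j_t}_{j_{t+1}+1}$. Since $i_{t+1}+1\ge1$ and $j_{t+1}+1\ge1$, both weights are of the form $\alpha(1-\alpha)^{\cdot}$. The transition therefore equals
\[
\lip\,\alpha^2(1-\alpha)^{(i_t-i_{t+1}-1)+(j_t-j_{t+1}-1)}.
\]

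For the final phase I would not use the lumped probability $P((i_k,j_k);\ts)$, since a path in $\mathcal{P}^{k,j}_{\!m,n}$ specifies the landing level $j$. I would instead use the game description. Player $V$ stops at level $j$ with $j\ge i_k$, which has probability $\pi^{j_k}_{j+1}=\alpha(1-\alpha)^{j_k-j-1}$ because $j+1\ge1$. Player $H$ then reaches $-1$, which has probability $\pi^{i_k}_0=(1-\alpha)^{i_k}$. So this phase contributes $\alpha(1-\alpha)^{j_k-1-j+i_k}$. As a check, summing over $j$ from $i_k$ to $j_k-1$ recovers $\pi_0^{i_k}-\pi_0^{j_k}$.

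Multiplying the $k+1$ factors gives $\alpha(\alpha^2\lip)^k(1-\alpha)^E$. The sums of $i_t-i_{t+1}$ and $j_t-j_{t+1}$ telescope to $m-i_k$ and $n-j_k$, so
\[
E=(m-i_k)+(n-j_k)-2k+j_k-1-j+i_k=n+m-1-j-2k.
\]

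The exponent is automatically nonnegative, since each factor's exponent is. The main point needing care is edge behaviour: an unsettled jump may land with $i_{t+1}=0$, and the final $H$ move from $(0,j)$ to $(-1,j)$ stops with probability $1$. Both cases are consistent with $\pi^{0}_0=1$ and $(1-\alpha)^0=1$, including when $\alpha=1$ under the convention $0^0=1$. I would also note that the constraint $k\le\min\{m,n-1-j\}$ is exactly what makes such paths exist. With these conventions checked, the formula holds for every path in $\mathcal{P}^{k,j}_{\!m,n}$.
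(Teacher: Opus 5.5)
Your argument is correct and is essentially the paper's proof. The paper multiplies the Bernoulli factors player by player: $m+1$ horizontal steps with $k$ stops of probability $\alpha$ plus one forced stop, $n-j$ vertical steps with $k+1$ stops, and a factor $\lip$ per unsettled phase. You instead multiply the transition weights phase by phase and telescope the exponents, which is the same bookkeeping.

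One side remark is inaccurate, though it does not affect the lemma. The condition $k\le\min\{m,n-1-j\}$ is necessary but not sufficient for $\mathcal{P}^{k,j}_{\!m,n}\neq\emptyset$. For $k=0$ and $j<m$, player $V$ lands below the diagonal, so the set is empty, which is consistent with $L^{0,j}_{m,n}=0$.
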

\begin{proof} 
In all $H$-winning paths, player $H$ moves from $m$ to $-1$ for a total of $m+1$ steps.
Each unsettled phase includes one failure contributing a factor $\alpha^k$,
 the last step in the final phase fails with probability 1, and the remaining $m-k$ steps are successes. Altogether the horizontal steps contribute a factor $\alpha^k(1\!-\!\alpha)^{m-k}$ to the probability. Similarly, on each path in $\mathcal{P}^{k,j}_{\!m,n}$  player $V$ makes $n-j$ vertical moves with exactly $k+1$ failures (one in each phase, including the final phase),
 contributing a factor $\alpha^{k+1}(1\!-\!\alpha)^{n-j-(k+1)}$.
Finally, each unsettled phase adds a factor $\lip$ for a total of $\lip^k$, so the result follows by multiplying these three factors.
\end{proof}

\subsection{Lattice path counting}\label{sec:latticepaths}
In view of Lemma \ref{prop:proba}, in order to compute $\PP(\mathcal{P}^{k,j}_{\!m,n})$ it remains to count the number of paths in $\mathcal{P}^{k,j}_{\!m,n}$. To this end, we observe that a reflection across the diagonal of $\ZZ^2$ establishes a bijection with a family of simple lattice paths below the diagonal (see Figure~\ref{fig:1b}). This allows us to leverage known results on  enumeration of lattice paths, for which we refer to~\citet{krattenthaler2015}. Below we recall the precise definition and the specific result needed for our purposes. 
\begin{definition}
    A {\em simple lattice path} is a sequence $(P_1,P_2,\ldots, P_N)$ of points in $\ZZ^2$ such that for all $1\leq i<N$ the increments $\Delta_i=P_{i+1}-P_{i}$ are either $(1,0)$ or $(0,1)$, so  that the path takes horizontal or vertical unit steps in the positive direction.
An {\em east-north turn} occurs when the path moves horizontally with $\Delta_i=(1,0)$ followed immediately after by a vertical step $\Delta_{i+1}=(0,1)$.
\end{definition}
\begin{theorem}[{{\cite[Theorem 10.14.1]{krattenthaler2015}}}] \label{thm:kratt}
    Let $a\geq b$ and $c \geq d$. The number of all simple lattice paths from $(a,b)$ to $(c,d)$ staying weakly below the diagonal $x = y$ with exactly $k$ east-north turns is
$$EN_{k}[(a,b);(c,d)]=\binom{c\!-\!a}{k}   \binom{d\!-\!b}{k} - \binom{c\!-b\!+\!1}{k}  \binom{d\!-\!a\!-\!1}{k},$$
with the convention $\binom{n}{k}=0$ when $k$ is not in the range $0\le k \leq n$, and in particular if $n<0\leq k$.
\end{theorem}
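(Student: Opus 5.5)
The statement is quoted from \cite[Theorem 10.14.1]{krattenthaler2015}, so within the paper it is simply invoked. If I had to prove it, I would encode each simple lattice path by its east-north corners. Then I would run a reflection (switching) argument on the pair of corner sequences, not on the path itself. Reflecting a path directly swaps $E$ and $N$ steps and turns east-north turns into north-east turns, so the turn count would not be preserved.

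\emph{Step 1 (unrestricted count).} A path from $(a,b)$ to $(c,d)$ with exactly $k$ east-north turns is determined by its corner points $(x_1,y_1),\dots,(x_k,y_k)$. These are the points reached by an $E$ step and left by an $N$ step. The abscissas satisfy $a<x_1<\dots<x_k\le c$ and the ordinates satisfy $b\le y_1<\dots<y_k<d$. Conversely, any such pair of strictly increasing sequences determines a unique path. Between corners $i$ and $i+1$ the path goes north from $(x_i,y_i)$ to $(x_i,y_{i+1})$ and then east to $(x_{i+1},y_{i+1})$. Before the first corner it goes north from $(a,b)$ to $(a,y_1)$ and then east; after the last corner it goes north to $d$ and then east to $c$. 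Hence the unrestricted count is $\binom{c-a}{k}\binom{d-b}{k}$, which is the first term.

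\emph{Step 2 (diagonal constraint in corner coordinates).} The highest points of each column are the tops of the vertical runs. So the path stays weakly below $y=x$ if and only if
\[
y_1\le x_0,\quad y_2\le x_1,\ \dots,\ y_k\le x_{k-1},\quad y_{k+1}\le x_k,
\]
where I set $x_0\triangleq a$ and $y_{k+1}\triangleq d$. Horizontal runs cause no violation because they only increase $x$. The assumptions $b\le a$ and $c\ge d$ guarantee the endpoints are admissible. The problem is now to count pairs of strictly increasing sequences $X\subseteq\{a+1,\dots,c\}$ and $Y\subseteq\{b,\dots,d-1\}$, each of size $k$, satisfying the interlacing inequalities $y_{i+1}\le x_i$ for $0\le i\le k$.

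\emph{Step 3 (switching involution).} A bad pair is one with some index $i$ such that $y_{i+1}\ge x_i+1$. Take the first such $i$ and exchange tails: the new $x$-sequence is $x_1,\dots,x_{i-1}$ followed by $y_{i+1}-1,\dots,y_k-1$, and the new $y$-sequence is $y_1,\dots,y_i$ followed by $x_i+1,\dots,x_{k-1}+1$, with the endpoint roles of $c$ and $d$ shifted accordingly. This mirrors a reflection across the line $y=x+1$: it maps $(a,b)$ to $(b-1,a+1)$. The target is a bijection between bad pairs and all unrestricted corner configurations of paths from $(b-1,a+1)$ to $(c,d)$ with $k$ east-north turns. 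By Step 1 those number $\binom{c-b+1}{k}\binom{d-a-1}{k}$. Subtracting gives the formula. The convention $\binom{n}{k}=0$ outside $0\le k\le n$ handles the degenerate cases; for example, when $d-a-1<k$ no bad path exists.

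\emph{Main obstacle.} The delicate part is Step 3. I must choose the switching so that strict monotonicity survives in both new sequences, the index ranges come out as exactly $\{b,\dots,c\}$ of size $c-b+1$ and $\{a+1,\dots,d-1\}$ of size $d-a-1$, and the map is invertible. Invertibility should follow because a first violation index can be recovered from the image, as in Gessel--Viennot tail-swapping. I would first verify the bookkeeping on small cases, such as $k=1$, before writing the general argument. If the direct switching becomes messy, the fallback is the Lindstr\"om--Gessel--Viennot lemma: model $X$ and $Y$ as two nonintersecting lattice paths, so the count is a $2\times 2$ determinant whose expansion is exactly the stated difference of products.
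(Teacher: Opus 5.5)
The paper gives no proof of this statement; it only quotes Krattenthaler. Your argument therefore has to stand on its own. Steps 1 and 2 are correct. The gap is Step 3, which carries all the content: the switching you write down is not a bijection onto the claimed set, and the bookkeeping you defer is exactly where it fails.

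As written, your new $x$-sequence has only $k-1$ entries, so $x_k$ is lost. More fundamentally, if the first violation is $i\ge 1$, minimality gives $y_1\le x_0=a$. Your new $y$-sequence keeps $y_1,\dots,y_i$, so it can never lie in $\{a+1,\dots,d-1\}$. Concretely, take $a=b=0$, $c=d=3$, $k=2$ and the bad pair $X=(1,2)$, $Y=(0,2)$, whose first violation is $i=1$. Your map produces the $x$-sequence $(1)$ and the $y$-sequence $(0,2)\not\subseteq\{1,2\}$. There are two further problems:
\begin{itemize}
\item Exchanging tails changes the end of the path, not the start, so it cannot realize $(a,b)\mapsto(b-1,a+1)$.
\item The $\pm1$ shifts imitate the geometric reflection in $y=x+1$, which, as you observed yourself, does not preserve EN turns.
\end{itemize}

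The fix is to exchange \emph{heads}, with \emph{no shifts}, at the first violation $i$:
\[
X'=(y_1,\dots,y_i,x_{i+1},\dots,x_k),\qquad Y'=(x_1,\dots,x_i,y_{i+1},\dots,y_k).
\]
The checks are short.
\begin{itemize}
\item Strict monotonicity at the junctions: $y_i\le x_{i-1}<x_{i+1}$ by minimality of $i$, and $x_i<y_{i+1}$ by the violation at $i$.
\item Ranges: $X'\subseteq\{b,\dots,c\}$ because $b\le a$ and $d\le c$. $Y'\subseteq\{a+1,\dots,d-1\}$ because $x_j\le x_i\le y_{i+1}-1\le d-1$ for $j\le i$, and $y_j\ge y_{i+1}\ge x_i+1\ge a+1$ for $j>i$.
\item For $i=0$ the map is the identity on pairs with $y_1\ge a+1$.
\item Inverse: recover $i$ as the least $j\ge 0$ with $x'_{j+1}>y'_j$, using the conventions $y'_0=a$ and $x'_{k+1}=+\infty$, then swap the heads back. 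The verifications are the same inequalities read backwards.
\item If $d\le a$, there are no bad pairs and $\binom{d-a-1}{k}=0$.
\end{itemize}
No shift is needed because the corner encoding already uses the half-open ranges $(a,c]$ for abscissas and $[b,d)$ for ordinates. That is where $b-1$ and $a+1$ in the reflected starting point come from. Exchanging tails, again without shifts, gives the same map with the two sequences interchanged. It lands in paths from $(a,b)$ to $(d-1,c+1)$, which have the same count.

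Your Lindstr\"om--Gessel--Viennot fallback is a legitimate route, but as stated it only names the method. Choosing the endpoints so that the off-diagonal entries come out as $\binom{c-b+1}{k}$ and $\binom{d-a-1}{k}$ requires the same bookkeeping as above.
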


\subsection{Proof of Theorem \ref{thm:main}}
\begin{proof} The case $n=m$ is immediate, so let us assume $n>m$. All paths in $\mathcal{P}^{k,j}_{\!m,n}$ have an initial vertical step
from $(m,n)$ to $(m,n-1)$ and a final
horizontal step from $(0,j)$ to $(-1,j)$. Ignoring these segments, and reflecting across the diagonal $x=y$, establishes a bijection with the lattice paths from $(j,0)$ to $(n-1,m)$ that remain below the diagonal with  exactly $k$ east-north turns. Theorem~\ref{thm:kratt} with $(a,b)=(j,0)$ and $(c,d)=(n-1,m)$ gives
$$|{\mathcal P}^{k,j}_{\!m,n}|=\binom{m}{k}\!\binom{n\!-\!1\!-\!j}{k} - \binom{n}{k}\!\binom{m\!-\!1\!-\!j}{k}=L^{k,j}_{m,n}.$$ 
Notice that $L^{k,j}_{m,n}$ is a nonnegative integer. This, combined with Lemma \ref{prop:proba}, implies 
\begin{equation}\label{Eq:Pmnkj}
\PP({\mathcal P}^{k,j}_{\!m,n})=L^{k,j}_{m,n}\,\alpha\,(\lip\,\alpha^2)^k(1\!-\!\alpha)^{n+m-1-j-2k}.
\end{equation}
Plugging this in \eqref{eq:sumprob} yields the announced formula \eqref{eq:cmn} for $\cm_{m,n}$. For $n=m+1$ this readily implies
$\|x_m-Tx_m\|=\|x_m-x_{m+1}\|/\alpha\leq\kappa\,\cm_{m,m+1}/\alpha$.
\end{proof}

\noindent{\sc Remark.}
An alternative proof of Theorem \ref{thm:main} consists in checking that the right hand side of \eqref{eq:cmn} satisfies the recursion \eqref{eq:recursive}, or its 
equivalent \eqref{eq:recurrencia_corta}.
However, such algebraic proof would obscure the origin of the formula
for $\cm_{m,n}$. To the best of our knowledge, this formula is new even in the nonexpansive case $\lip=1$ for which it provides a simpler, though equivalent, expression for the one given in \cite[Theorem 4.2]{baillion1996rate}.

\subsection{Representation by hypergeometric functions}
The algebraic identities in Appendix \ref{App:Hypergeom}, which relate Gauss's $\Gauss$ and Appell's $\Fone$ hypergeometric functions,
allow us to express $\cm_{m,n}$ and $\bm_m$ in the following alternative forms.
\begin{proposition}\label{Thm:Alt_cmn} Let $\alpha\in(0,1)$ and set $\zalpha=(\frac{\alpha}{1-\alpha})^2\lip$. Then, for all $0\leq m<n$ we have
\begin{align}
    \cm_{m,n}&=\mbox{$\alpha(1-\alpha)^{m+n-1}\left(n\,\Fone(1\!-\!n;-m,1;2;\zalpha,-\frac{\alpha}{1-\alpha})-m\,\Fone(1\!-\!m;-n,1;2;\zalpha,-\frac{\alpha}{1-\alpha})\right)$},    \label{Eq:cmnNuevo}\\
   \bm_m&= \mbox{$(1-\alpha)^{2m}\Fone(-m;-(m+1),2;2;\zalpha,-\frac{\alpha}{1-\alpha})$}.    \label{Eq:RmNuevo}
\end{align}
\end{proposition}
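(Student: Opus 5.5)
The plan is to start from the explicit formula \eqref{eq:cmn}--\eqref{eq:latticepaths} of Theorem~\ref{thm:main} and to match coefficients with the double power series defining Appell's function,
$\Fone(a;b_1,b_2;c;x,y)=\sum_{k,l\ge0}\frac{(a)_{k+l}(b_1)_k(b_2)_l}{(c)_{k+l}\,k!\,l!}x^ky^l$.
Write $y\triangleq\frac{\alpha}{1-\alpha}$, so that $(1-\alpha)^{-1}=1+y$. First factor $\alpha(1-\alpha)^{m+n-1}$ out of \eqref{eq:cmn}. The generic term then becomes
\[
L^{k,j}_{m,n}\,\zalpha^k\,(1+y)^j ,
\]
because $(\alpha^2\lip)^k(1-\alpha)^{-2k}=\zalpha^k$ and $(1-\alpha)^{-j}=(1+y)^j$. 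The $k$-dependence is already of the right type, so only the sum over $j$ needs to be reorganized into powers of $y$. I treat the two binomial products in $L^{k,j}_{m,n}$ separately.

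For the positive part $\binom{m}{k}\binom{n-1-j}{k}$, substitute $t=n-1-j$. The $j$-sum becomes $\sum_{t=k}^{n-1}\binom{t}{k}(1+y)^{n-1-t}$. The key auxiliary identity I would prove is
\[
\sum_{t=k}^{N-1}\binom{t}{k}(1+y)^{N-1-t}=\sum_{l\ge0}\binom{N}{k+l+1}\,y^l .
\]
To prove it, expand $(1+y)^{N-1-t}$ binomially and exchange the order of summation. The coefficient of $y^l$ is then $\sum_t\binom{t}{k}\binom{N-1-t}{l}=\binom{N}{k+l+1}$, which is the Chu--Vandermonde/hockey-stick convolution. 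A direct Pochhammer computation shows that the $(k,l)$ coefficient of $n\,\Fone(1-n;-m,1;2;\zalpha,-y)$ is exactly $\binom{n}{k+l+1}\binom{m}{k}\zalpha^k y^l$. Indeed, the signs $(-1)^{k+l}$ from $(1-n)_{k+l}$, $(-1)^k$ from $(-m)_k$, and $(-1)^l$ from $(-y)^l$ all cancel, and $(1)_l/l!=1$. Hence the positive part gives the first term of \eqref{Eq:cmnNuevo}.

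The negative part $\binom{n}{k}\binom{m-1-j}{k}$ is handled the same way with $t=m-1-j$ and $N=m$. Here terms with $j>m-1$ vanish by Krattenthaler's convention, so the sum truncates correctly. This produces $m\,\Fone(1-m;-n,1;2;\zalpha,-y)$ and proves \eqref{Eq:cmnNuevo}.

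For \eqref{Eq:RmNuevo}, set $n=m+1$ and divide by $\alpha$. This gives
\[
\bm_m=(1-\alpha)^{2m}\Big[(m+1)\Fone(-m;-m,1;2;\zalpha,-y)-m\,\Fone(1-m;-(m+1),1;2;\zalpha,-y)\Big].
\]
I would again compare coefficients of $\zalpha^k y^l$ rather than invoke contiguous relations. The bracket contributes $\binom{m+1}{k+l+1}\binom{m}{k}-\binom{m}{k+l+1}\binom{m+1}{k}$. Expanding the Pochhammer symbols of $\Fone(-m;-(m+1),2;2;\cdot,\cdot)$ in the same way gives the coefficient $\binom{m}{k+l}\binom{m+1}{k}\frac{l+1}{k+l+1}$. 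Both expressions can be written over factorials: set $s=k+l$ and factor out $\frac{m!\,(m+1)!}{k!\,(m+1-k)!\,(s+1)!\,(m-s)!}$. After this, the identity reduces to a linear relation in $m,k,s$ that should check directly.

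I expect this last binomial identity to be the main obstacle, or at least the only nonroutine step. It is where sign conventions and the boundary cases ($k+l=m$, $k=m+1$, and the zero conventions for binomials) must be handled carefully. If a direct check becomes messy, my fallback is the Appell contiguous relations of Appendix~\ref{App:Hypergeom} that raise $b_2$ from $1$ to $2$.
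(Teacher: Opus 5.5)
Your proposal is correct. For \eqref{Eq:cmnNuevo} it is essentially the paper's argument without the intermediate packaging. The paper first sums over $k$ to get $\Gauss(j+1-n,-m;1;\zalpha)-\mathbbm{1}_{\{j<m\}}\Gauss(j+1-m,-n;1;\zalpha)$. It then applies Lemma~\ref{Le:Sum_2F1} with $t=1/(1-\alpha)$, so that $1-t=-\frac{\alpha}{1-\alpha}$. The proof of that lemma reduces to exactly your identity $\sum_t\binom{t}{k}\binom{N-1-t}{l}=\binom{N}{k+l+1}$, obtained there by Taylor expansion at $t=1$ plus Vandermonde. Your coefficient matching also covers $m=0$ directly, which the paper handles separately via $\cm_{0,n}=1-\pi_0^n$.

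The genuine difference is in \eqref{Eq:RmNuevo}. The paper does not compare coefficients; it quotes the contiguous relation
\[
a\Fone(a+1;b_1,b_2;c;u,v)-(a-b_1-b_2)\Fone(a;b_1,b_2;c;u,v)=b_1\Fone(a;b_1+1,b_2;c;u,v)+b_2\Fone(a;b_1,b_2+1;c;u,v)
\]
from the literature (it is not in the appendix, so your fallback would also need an outside source). The paper uses it with $a=-m$, $b_1=-(m+1)$, $b_2=1$, $c=2$. Since $a-b_1-b_2=0$, this is exactly $\Fone(-m;-(m+1),2;2;\cdot)=(m+1)\Fone(-m;-m,1;2;\cdot)-m\Fone(1-m;-(m+1),1;2;\cdot)$, i.e.\ your bracket.

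Your direct check does go through. After factoring out your common factor, the two bracket terms contribute $m+1-k$ and $m-s$. Their difference is $s+1-k=l+1$, which is the right-hand coefficient. The boundary cases are harmless: every nonzero coefficient has $k\le s\le m$, so $k=m+1$ never occurs. At $s=m$ the factor $m-s=0$ correctly reproduces $\binom{m}{m+1}=0$.

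The paper's route is shorter and identifies the identity as a standard contiguous relation, at the cost of an external reference. Yours is fully self-contained and elementary.
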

\begin{proof} For $m=0$ the formula \eqref{Eq:cmnNuevo} follows directly from $c_{0,n}=1-\pi_0^n$. Let us then consider the case $m\geq 1$. The equality \eqref{Eq:Pmnkj} can be written as 
  $\PP(\mathcal{P}^{k,j}_{\!m,n})=\alpha\,(1\!-\!\alpha)^{n+m-1-j}\,L^{k,j}_{m,n}\,\zalpha^{\hspace{0.2ex}k}$. 
Now, the definition of $\Gauss$ gives
$\Gauss(-a,-b;1;z)=\sum_{k\geq 0}\binom{a}{k}\binom{b}{k}\,z^k$, 
so that substituting $z=\zalpha$ and observing that the negative term in $L_{m,n}^{k,j}$ vanishes for $j\geq m$, we obtain 
$$\sum_{k\geq 0}\PP(\mathcal{P}^{k,j}_{\!m,n})=
\alpha (1\!-\!\alpha)^{m+n-1-j}\left(\Gauss(j+1-n,-m;1;\zalpha)-\mathbbm{1}_{\{j<m\}}\Gauss(j+1-m,-n;1;\zalpha)\right).
$$
Then,
\eqref{Eq:cmnNuevo} follows by plugging this into \eqref{eq:sumprob} and applying twice the identity \eqref{eq:main-identity} in Appendix \ref{App:Hypergeom} with $t=1/(1-\alpha)$, first for $a=n-1$ and then with $a=m-1$ (for $m=0$ the second term vanishes). 

Evaluating \eqref{Eq:cmnNuevo} for $n=m+1$, the identity \eqref{Eq:RmNuevo} is a direct consequence of   the following contiguous relation for $\Fone$
(see for instance \citet[equation (6a)]{Schlosser2013Appell})
\begin{align*}
   a\Fone(a+1;b_1,b_2;c;u,v)-(a-b_1-b_2)\Fone(a;b_1,b_2;c;u,v)&=\\
   b_1\Fone(a;b_1+1,b_2;c;u,v)+b_2\Fone(a;b_1,b_2+1;c;u,v),
\end{align*}
with $a=-m$, $b_1=-(m+1)$, $b_2=1$, and $c=2$.
\end{proof}

\subsection{Comparison with tight error bounds based on optimal transport}
\label{Sec:optimal_transport}
 Krasnosel'skii--Mann iteration is a special case of general Mann schemes (see \cite{mann1953mean}) defined recursively
  from 
$x_0 \in C$ by
\begin{equation*} \tag{$\textsc{m}$} \label{eq:mann}
\mbox{$(\forall m\in\N)\quad x_m=\sum_{i=0}^m\pi^m_i\,Tx_{i-1}$}
\end{equation*}
for a triangular array of averaging scalars 
$\pi^m_i\ge 0$ with $\sum^m_{i=0}\pi^m_i=1$, $\pi^m_m>0$, and $\pi^m_i= 0$ for $i>m$. As before, we use the convention $Tx_{-1}\!= x_0$.

By extending the nonexpansive analysis in \cite{bc18,bravo2022universal}, tight error bounds for Lipschitz maps were obtained in \cite[Appendix A]{bravo_cominetti_lee2026halpern}. For contractions these bounds are as follows.
 Let  $\mathcal{F}_{m,n}$ be the set of transport plans from $\pi^m$ to $\pi^n$, that is, vectors $z=(z_{i,j})_{i=0,\dots, m; j=0,\dots, n}$ with $z_{i,j} \ge 0$ and 
$$\left\{\begin{array}{cl}
    \mbox{$\sum^n_{j=0} z_{i,j}$} = \pi^m_i &\mbox{ for } 0\leq i\le m,\\
        \mbox{$\sum^m_{i=0} z_{i,j}$} = \pi^n_j & \mbox{ for } 0\le j\le n.
\end{array}\right.$$
Starting with $d_{-1,-1}=0$ and $d_{-1,n}=d_{n,-1}=1/\lip$ for all $n\in\N$, define $d_{m,n}$  by the recursive family of optimal transport problems
\begin{equation*}\label{Eq:optrans}
(\forall m,n\in\N)\quad d_{m,n} \triangleq \min_{z \in \mathcal{F}_{m,n}}\sum^m_{i=0}\sum^n_{j=0}  z_{i,j}\,\lip\,d_{i-1,j-1}.
\end{equation*}
Combining these $d_{m,n}$'s, define $\Rtight_m\triangleq\sum^m_{i=0} \pi^m_i\,\lip\,d_{i-1,m}$.
\begin{theorem}[\cite{bravo_cominetti_lee2026halpern}]
    Every sequence $(x_m)_{m\in\N}$ generated by \emph{\eqref{eq:mann}} satisfies $\|x_m-x_n\|\leq \kappa\,d_{m,n}$ and $\|x_m-Tx_m\|\leq\kappa\,\Rtight_m$, for all $m,n\in\N$. These bounds are minimax tight: 
    for every $\lip\in(0,1]$, $\kappa\geq 0$, and triangular sequence $(\pi^n)_{n\in\N}$ of averaging factors, there is   a normed space $(X,\|\cdot\|)$, an $\lip$-Lipschitz map  $T:C\to C$ on a closed convex set $C\subseteq X$ with diameter $\kappa$, and a corresponding Mann sequence $(x_n)_{n\in\N}$ 
     that satisfies all these bounds with equality.
\end{theorem}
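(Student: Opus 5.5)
The plan has two parts: an upper bound proved by induction using the transport plans, and a matching lower-bound construction. I expect the construction to be the main difficulty.

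\emph{Upper bounds.} I would prove $\|x_m-x_n\|\le\kappa\,d_{m,n}$ by induction on $m+n$; the case $m=n=0$ is trivial. Fix $m,n$ and any plan $z\in\mathcal F_{m,n}$. Since the marginals of $z$ are $\pi^m$ and $\pi^n$, we can write
$$x_m-x_n=\sum_{i=0}^m\sum_{j=0}^n z_{i,j}\,(Tx_{i-1}-Tx_{j-1}).$$
I then bound each term by cases:
\begin{itemize}
\item If $i,j\ge1$, the Lipschitz property and the induction hypothesis (note $i-1+j-1<m+n$) give $\|Tx_{i-1}-Tx_{j-1}\|\le\lip\,\kappa\,d_{i-1,j-1}$.
\item If $i=0<j$, then $\|x_0-Tx_{j-1}\|\le\kappa=\kappa\,\lip\,d_{-1,j-1}$. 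This is exactly where the normalization $d_{-1,n}=1/\lip$ is used, and the case $j=0<i$ is symmetric.
\item If $i=j=0$, the term vanishes, consistent with $d_{-1,-1}=0$.
\end{itemize}
Summing gives $\|x_m-x_n\|\le\kappa\sum_{i,j}z_{i,j}\,\lip\,d_{i-1,j-1}$, and minimizing over $z$ yields $\kappa\,d_{m,n}$.

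For the residual, write $x_m-Tx_m=\sum_i\pi^m_i(Tx_{i-1}-Tx_m)$. Applying the same three cases to each term gives $\|x_m-Tx_m\|\le\kappa\,\Rtight_m$.

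\emph{Tightness.} By scaling it suffices to take $\kappa=1$. The strategy is the one used in the nonexpansive case \cite{bc18,bravo2022universal}:
\begin{enumerate}
\item Place the points $y_{-1}=x_0,\ y_0,y_1,\dots$ (playing the role of $Tx_{-1},Tx_0,\dots$) in $X=\ell^\infty$, with coordinates given by Kantorovich dual potentials of the optimal transport problems defining the $d_{m,n}$'s.
\item Set $x_m=\sum_i\pi^m_i y_{i-1}$. Choose the coordinates so that $\|y_{i-1}-y_{j-1}\|_\infty=\lip\,d_{i-1,j-1}$ and $\|x_0-y_j\|=1$ for $j\ge0$.
\item Since $\ell^\infty$ distances are suprema over coordinates, one coordinate per pair $(m,n)$ can be chosen, via Kantorovich--Rubinstein duality for the optimal plan, so that it attains $\|x_m-x_n\|_\infty=d_{m,n}$. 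Adding one coordinate per $m$ similarly attains $\Rtight_m$.
\end{enumerate}
A further condition is then needed on the coordinates, namely $\|y_m-y_n\|\le\lip\|x_m-x_n\|$; this is what makes $T$ below $\lip$-Lipschitz. It has to be built into the induction together with the upper estimates. I would try to guarantee it by checking that every coordinate functional is $1$-Lipschitz with respect to the recursive cost.

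\emph{Defining $T$.} Set $Tx_m:=y_m$ on the set $\{x_m\}$, which is $\lip$-Lipschitz by construction. Extend $T$ to all of $\ell^\infty$ coordinatewise by McShane extension, which preserves the Lipschitz constant in the sup norm. Take $C$ to be the closed convex hull of $\{x_0\}\cup\{y_m\}$, which has diameter $1$ once all pairwise distances are at most $1$. To force $T(C)\subseteq C$, I would either truncate coordinates, or enlarge $C$ to a suitable hyperconvex set (an $\ell^\infty$ box) of diameter $1$ and use its nonexpansive retraction.

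\emph{Main obstacle.} The delicate step is the simultaneous realizability: finding a single configuration in $\ell^\infty$ that attains all the equalities $d_{m,n}$ and $\Rtight_m$ at once, while still satisfying every Lipschitz inequality and the diameter constraint. This requires a careful inductive choice of dual potentials, compatible across all the optimal transport problems.
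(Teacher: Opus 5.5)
Your upper-bound half is correct: expanding $x_m-x_n$ along an arbitrary plan $z\in\mathcal F_{m,n}$, using $\lip\,d_{-1,j}=1$ for the terms containing $x_0$, applying the induction hypothesis to the other terms, and minimizing over $z$ is the standard argument. The residual bound is the same computation against the Dirac mass at $m$. The paper quotes the whole theorem from \cite{bravo_cominetti_lee2026halpern} without proof, so there is no in-paper argument to compare against. For tightness, your $\ell^\infty$/dual-potential plan is sound, but it stops at exactly the step that needs an idea. Moreover, the fix you anticipate (a careful inductive, mutually compatible choice of potentials) is not what is missing.

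\emph{The missing ingredient is the triangle inequality.} Set $\rho_{i,j}\triangleq\lip\,d_{i,j}$ on $\{-1\}\cup\N$, so that $\rho_{-1,-1}=0$ and $\rho_{-1,j}=1$. You need $\rho$ to be a pseudometric with values in $[0,1]$; this is the fact the paper quotes when it says the $d_{m,n}$ form a metric.

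It follows by induction on $N$. Suppose $\rho$ is such a pseudometric on $\{-1,\dots,N-1\}$. Then for $m,n\le N$, $d_{m,n}$ is the Wasserstein-$1$ distance for $\rho$ between $\pi^m$ and $\pi^n$ pushed forward by $i\mapsto i-1$. By the gluing lemma it is therefore a pseudometric with values in $[0,1]$. Multiplying by $\lip\le 1$ and adjoining the point $-1$ at distance $1$ preserves this.

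Without this fact your Kantorovich--Rubinstein step is unjustified. Since $y_{i-1}$ is a single point entering both $x_m$ and $x_n$, you need \emph{one} potential $u$ with $|u(i)-u(j)|\le\rho_{i,j}$ that attains $d_{m,n}$. For a cost violating the triangle inequality, such potentials only reach the transport cost of the induced path metric, which can be strictly smaller than $d_{m,n}$.

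With the metric property, simultaneous realizability is automatic and needs no induction. Take as coordinates:
\begin{itemize}
\item the Fr\'echet functions $u^k(i)=\rho_{i,k}$ for $k\ge -1$;
\item one Kantorovich--Rubinstein potential $u^{m,n}$ per pair $(m,n)$, normalized by $u^{m,n}(-1)=0$ and extended $1$-Lipschitz to all indices.
\end{itemize}
Every coordinate is $1$-Lipschitz for $\rho$. So for any $z\in\mathcal F_{m,n}$, each coordinate of $x_m-x_n$ is at most $\sum_{i,j}z_{i,j}\rho_{i-1,j-1}$, giving $\|x_m-x_n\|_\infty\le d_{m,n}$, with equality at the coordinate $u^{m,n}$. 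The Fr\'echet coordinates give $\|y_i-y_j\|_\infty=\rho_{i,j}$. Taking $k=m$, they also give $\|x_m-y_m\|_\infty=\sum_i\pi^m_i\rho_{i-1,m}=\Rtight_m$.

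Hence $\|y_m-y_n\|_\infty=\lip\,\|x_m-x_n\|_\infty$ holds identically. So $x_m\mapsto y_m$ is well defined and $\lip$-Lipschitz; no separate condition has to be built into an induction.

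Your box option for $C$ then works. Since $\rho\le 1$, each coordinate of the $y_i$ lies in an interval $[a_k,a_k+1]$. Composing the McShane extension with coordinatewise clipping onto $\prod_k[a_k,a_k+1]$ gives $T:C\to C$ on a closed convex set of diameter $1$.
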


The scalars $\Rtight_m$ and $d_{m,n}$ depend solely on $\lip$ and the $\pi^n$'s, and the bounds are valid for every $\lip$-Lipschitz map $T:C\to C$.
As shown in \cite{bravo_cominetti_lee2026halpern}, the $d_{m,n}$'s define a metric in $\N$
with $d_{m,n}\in[0,1]$, and each $d_{m,n}$ admits a {\em simple} optimal transport such that $z_{i,i}=\min\{\pi_i^m,\pi_i^n\}$. 
We supplement this by the following monotonicity and strengthening of the triangle inequality. We omit the proofs, which follow step by step the arguments  for the nonexpansive case  in \cite[Theorems 5.1 and 5.2]{bravo2022universal}. 

\begin{theorem}
Suppose that the averaging factors satisfy the monotonicity condition
    $$(\forall n\geq 1)\quad \mbox{$\pi_n^n>0$ and $0\leq \pi_i^{n}\leq\pi_i^{n-1}$ for $0\le i<n$.}\leqno\mbox{\sc(h)}$$
Then, in the region $m\leq n$ the  $d_{m,n}$'s are nonincreasing in $m$ and nondecreasing in $n$. Moreover, they satisfy the {\em convex quadrangle inequality} 
\begin{equation*}\label{eq:4point}
d_{i,l}+d_{j,k}\leq d_{i,k}+d_{j,l}\qquad   \mbox{ for all $i\leq j\leq k\leq l$.}\leqno \mbox{\sc(q)}
\end{equation*}
\end{theorem}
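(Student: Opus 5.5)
Following \cite[Theorems 5.1 and 5.2]{bravo2022universal}, I would argue by strong induction on $m+n$. The inductive hypothesis is that monotonicity and {\sc(q)} hold for all pairs of indices with smaller sum, and for the shifted values $d_{i-1,j-1}$ that enter the recursion. Two tools are available: the recursion $d_{m,n}=\min_{z\in\mathcal F_{m,n}}\sum z_{i,j}\,\lip\,d_{i-1,j-1}$, and the fact from \cite{bravo_cominetti_lee2026halpern} that an optimal plan can be chosen \emph{simple}, with $z_{i,i}=\min\{\pi^m_i,\pi^n_i\}$. Under {\sc(h)} and $m\le n$ we have $\pi^n_i\le\pi^m_i$ for $i\le m$, so the diagonal keeps $\pi^n_i$. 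The excess $\pi^m_i-\pi^n_i$ for $i\le m$ must then be shipped to columns $j>m$, which carry mass $\pi^n_j$.

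For \textbf{monotonicity in $n$}, I would compare $d_{m,n}$ with $d_{m,n+1}$. Take an optimal simple plan $z$ for $(m,n+1)$. By {\sc(h)}, $\pi^{n+1}_i\le\pi^n_i$ for every $i\le n$, so mass can be rerouted: the column $n+1$ mass $\pi^{n+1}_{n+1}$ is redistributed onto columns $j\le n$ to make up the deficits $\pi^n_j-\pi^{n+1}_j$. This gives a plan $z'$ from $\pi^m$ to $\pi^n$. The cost changes from $d_{i-1,n}$ to $d_{i-1,j-1}$ with $j-1\le n-1<n$, so by the induction hypothesis (monotonicity in the second index) each term does not increase. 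Hence $d_{m,n}\le d_{m,n+1}$. Monotonicity in $m$, namely $d_{m,n}\le d_{m-1,n}$ for $m\le n$, is handled symmetrically: a plan for $(m-1,n)$ is modified by moving row mass onto row $m$, and one uses that the costs $d_{i-1,j-1}$ decrease as $i$ increases toward $j$. Along the way the boundary values $d_{-1,n}=1/\lip$ and $d_{-1,-1}=0$ have to be handled, but they are compatible with both orderings.

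For the \textbf{quadrangle inequality} {\sc(q)}, fix $i\le j\le k\le l$. I would take optimal simple plans $z^{ik}\in\mathcal F_{i,k}$ and $z^{jl}\in\mathcal F_{j,l}$ and construct feasible plans $\hat z\in\mathcal F_{i,l}$ and $\check z\in\mathcal F_{j,k}$ with $\hat z+\check z$ equal to $z^{ik}+z^{jl}$ up to an uncrossing. This is possible because the marginals satisfy $\pi^i+\pi^l+\pi^j+\pi^k$ on both sides. Wherever an exchange pairs entries $(p,q)$ and $(p',q')$ with crossing order, I would replace them with the uncrossed pairs. The induction hypothesis (quadrangle inequality on the shifted indices, applied termwise) then gives
\[
\lip\,d_{p-1,q'-1}+\lip\,d_{p'-1,q-1}\le \lip\,d_{p-1,q-1}+\lip\,d_{p'-1,q'-1}.
\]
Summing yields $d_{i,l}+d_{j,k}\le \sum \hat z\,\lip\, d+\sum\check z\,\lip\, d\le d_{i,k}+d_{j,l}$.

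The main obstacle is this mass-splitting step in {\sc(q)}. The plans must be split so that every exchange is genuinely an ordered quadruple to which the induction hypothesis applies, including degenerate cases where indices coincide or hit the boundary value $-1$. Monotonicity is itself needed to dispose of these degenerate quadruples, since {\sc(q)} with $j=k$ reduces to a triangle-type inequality. The contraction factor $\lip$ enters only as a common multiplicative constant, apart from the boundary value $1/\lip$. So I expect the nonexpansive bookkeeping of \cite{bravo2022universal} to transfer verbatim, once it is checked that $\lip\,d_{-1,n}=1$ plays the same role as before.
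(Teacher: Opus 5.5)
The paper gives no argument of its own beyond deferring to \cite[Theorems 5.1 and 5.2]{bravo2022universal}, so your sketch has to stand on its own. The overall plan (induction, modifying simple optimal plans, uncrossing) is reasonable. Monotonicity in $m$ works, provided the mass moved onto row $m$ is taken from the off-diagonal shipments of a simple plan for $(m-1,n)$: those shipments go to columns $j\ge m$, and $\pi^{m-1}_i-\pi^m_i\le\pi^{m-1}_i-\pi^n_i$ leaves enough of them. Two other steps, however, do not go through as written.

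\emph{Monotonicity in $n$.} Redistributing the mass of column $n+1$ onto all columns $j\le n$ does not lower costs termwise. Monotonicity in the second index compares $d_{i-1,j-1}$ with $d_{i-1,n}$ only when $j\ge i$. For instance, column $0$ has deficit $\alpha(1-\alpha)^n>0$ for \eqref{eq:km_a}. Any share of a row $i\ge 1$'s column-$(n+1)$ mass sent there costs $\ell\,d_{i-1,-1}=1\ge \ell\,d_{i-1,n}$.

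The fix is a two-step rerouting.
\begin{itemize}
\item Start from a simple plan for $(m,n+1)$. Cover each deficit $\delta_t=\pi^n_t-\pi^{n+1}_t$, $t\le m$, on the diagonal from row $t$ itself. To do so, cut $\delta_t$ from row $t$'s off-diagonal shipments; this is possible since $\pi^m_t-\pi^{n+1}_t\ge\delta_t$.
\item A mass count shows that what remains in column $n+1$ equals exactly what the columns $k\in(m,n]$ now lack. Move it there within each row $s\le m$, at cost $\ell\,d_{s-1,k-1}\le \ell\,d_{s-1,n}$. Now $s-1<k-1\le n$, so the induction hypothesis applies.
\end{itemize}

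\emph{Quadrangle inequality.} This is the real gap. The requirement that every exchange be an ordered quadruple is the whole content of \textsc{(q)}, and you give no construction achieving it. Equality of the total marginals only guarantees that $z^{ik}$ and $z^{jl}$ can be glued into \emph{some} $\hat z\in\mathcal F_{i,l}$ and $\check z\in\mathcal F_{j,k}$. It gives no control on orientation.

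Suppose a swapped pair $(p,q)$ from $z^{ik}$ and $(p',q')$ from $z^{jl}$ has $p'<p\le q\le q'$. Nothing in your setup excludes this, since the sources of $z^{jl}$ range over $\{0,\dots,j\}$. Then \textsc{(q)} for the sorted quadruple gives $d_{p'-1,q'-1}+d_{p-1,q-1}\le d_{p'-1,q-1}+d_{p-1,q'-1}$, which is the reverse of what you need. You must exhibit a gluing in which every swapped quadruple satisfies $p\le p'\le q\le q'$. The subcases $p=p'$ and $q=q'$ are trivial, and $p'=q$ is the triangle inequality. This is where the induction hypothesis has to be used, for instance through nested optimal plans for the shifted costs.

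A useful reduction is available by telescoping, $d_{i,l}-d_{i,k}=\sum_{b=k}^{l-1}(d_{i,b+1}-d_{i,b})$. With it, \textsc{(q)} is equivalent to its adjacent case $d_{a,b+1}+d_{a+1,b}\le d_{a,b}+d_{a+1,b+1}$ for $a<b$. That is, the differences $d_{m,n+1}-d_{m,n}$ must be nondecreasing in $m$ for $m\le n$. This is exactly the form in which the analogous property is proved for \eqref{eq:km_a} with $\alpha\ge\frac12$ in Lemma~\ref{lem:increasing-differences1}, and it localizes the construction you need.
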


\vspace{-3ex}
\begin{figure}[ht] 
\centering
\begin{tikzpicture}[thick,scale=0.6,-,shorten >= 1pt,shorten <= 1pt,every node/.style={scale=0.5}]
\begin{scope}[start chain=going right,node distance=15mm]
 \node[on chain,draw,circle] (m)  {$i$};
 \node[on chain,draw, circle] (k)  {$j$};
 \node[xshift=2cm,on chain,draw, circle] (j)  {$k$};
\node[on chain,draw,circle] (n) {$l$};
\end{scope}
\path[every node/.style={font=\sffamily\small}] (m) edge [bend left = 35]   node [fill=white] {$d_{i,l}$}  (n) ;
\path[every node/.style={font=\sffamily\small}] (k) edge [bend left]   node  [fill=white]  {$d_{j,k}$} (j) ;
\path[every node/.style={font=\sffamily\small}] (k) edge [bend right = 35]  node  [near end,sloped, fill=white] {$d_{j,l}$} (n) ;
\path[every node/.style={font=\sffamily\small}] (m) edge [bend right = 35]   node  [near start,sloped,fill=white] {$d_{i,k}$} (j) ;

\end{tikzpicture}
\vspace{-1ex}
\end{figure}

A remarkable consequence of {\sc(q)} is that each $d_{m,n}$ admits an optimal transport that is not only {\em simple} but also {\em nested}, in the sense that there is no flow crossing: there are no $i<j<k<l$ with $z_{i,k}>0$ and $z_{j,l}>0$. Such a nested transport plan is uniquely determined, and can be computed 
by a greedy procedure (see the Inside-Out Algorithm in \citet{baillon1992optimal, bravo2022universal}).

\vspace{2ex}
Now let us consider the optimal transport bounds $d_{m,n}$ for iteration \eqref{eq:km_a}, where $\pi_0^m=(1-\alpha)^m$ and $\pi^m_i=\alpha\,(1-\alpha)^{m-i}$ for $1\le i\le m$.
Since {\sc (h)} is clearly satisfied,
each $d_{m,n}$ has a nested optimal transport. In particular, for  $\alpha\geq\frac12$ and $n>m$ the optimum has the explicit form (see Figure \ref{PQtransports})
\begin{equation}\label{trasporte_optimo}
z_{i,j}=\left\{
\begin{array}{cl}
\pi_i^n&\text{for }i=j\in\{0,\ldots,m\}\\
\pi_i^m-\pi_i^n&\text{for }i\in\{0,\ldots,m-1\}\text{ and }j=n\\
\pi^n_j&\text{for } i=m \text{ and } j\in\{m+1,\ldots,n-1\}\\
\pi^m_m-\sum_{j=m}^{n-1}\pi_j^n&\text{for } i=m \text{ and } j=n\\
0&\text{otherwise}
\end{array}
\right.
\end{equation}
while for $n=m$ we simply have $z_{i,i}=\pi_i^m$ for all $i\in\{0,\ldots,m\}$ with all the other flows equal to 0. Observe that $z_{m,n}=\pi^m_m-\sum_{j=m}^{n-1}\pi_j^n=(2\alpha-1)+\pi_0^{n+1-m}\geq 0$ because $\alpha\geq\frac12$.

\begin{figure}[t]
\centering
\begin{tikzpicture}[thick,-,shorten >= 1pt,shorten <= 1pt,scale=0.7,every node/.style={scale=0.7}]
 \node[draw,circle] (i0) at (0,0.0) {$0$}; \node[draw,circle] (j0) at (5.5,0.0) {$0$};
 \node[] (idots) at (0,-0.7)   {$\vdots$};  \node[] (jdots) at (5.5,-0.7)   {$\vdots$};
 \node[draw,circle] (ii) at (0,-1.7) {$i$}; \node[draw,circle] (ji) at (5.5,-1.7) {$i$};
 \node[] (idots2) at (0,-2.4)   {$\vdots$};  \node[] (jdots2) at (5.5,-2.4)   {$\vdots$};
 \node[draw,circle] (im) at (0,-3.4) {$m$}; \node[draw,circle] (jm) at (5.5,-3.4) {$m$};

 \node[] (jdots3)  at (5.5,-4.1) {$\vdots$};
 \node[draw,circle] (jj) at (5.5,-5.1) {$j$};
 \node[] (jdots4)  at (5.5,-5.8) {$\vdots$};
 \node[draw,circle] (jn) at (5.5,-6.8) {$n$};
 \draw[->] (i0)--(j0);
 \draw[->] (ii)--(ji);
 \draw[->] (im)--(jm);
 \draw[->] (i0)--(jn);
 \draw[->] (ii)--(jn);
 \draw[->] (im)--(jn);
  \draw[->] (im)--(jj);

  \node[draw,circle] (ii0) at (10,0.0) {$0$}; \node[draw,circle] (jj0) at (15.5,0.0) {$0$};
 \node[] (iidots) at (10,-0.7)   {$\vdots$};  \node[] (jjdots) at (15.5,-0.7)   {$\vdots$};
 \node[draw,circle] (iii) at (10,-1.7) {$i$}; \node[draw,circle] (jji) at (15.5,-1.7) {$i$};
 \node[] (iidots2) at (10,-2.4)   {$\vdots$};  \node[] (jjdots2) at (15.5,-2.4)   {$\vdots$};
 \node[draw,circle] (iim) at (10,-3.4) {$m$}; \node[draw,circle] (jjm) at (15.5,-3.4) {$m$};

 \node[] (jjdots3)  at (15.5,-4.1) {$\vdots$};
 \node[draw,circle] (jjj) at (15.5,-5.1) {$j$};
 \node[] (jjdots4)  at (15.5,-5.8) {$\vdots$};
 \node[draw,circle] (jjn) at (15.5,-6.8) {$n$};
 \draw[->] (ii0)--(jj0);
 \draw[->] (iii)--(jji);
 \draw[->] (iim)--(jjm);
 \draw[->] (ii0)--(jjn);
 \draw[->] (iii)--(jjn);
 \draw[->] (iim)--(jjn);
  \draw[->] (iim)--(jjj);
    \draw[->] (ii0)--(jjm);
    \draw[->] (ii0)--(jjj);

    \draw[->] (iii)--(jjm);
    \draw[->] (iii)--(jjj);

\end{tikzpicture}
\caption{\label{PQtransports} The optimal transport $z$ for $d_{m,n}$ when  $\alpha\in[\frac12,1]$ (left) and the feasible transport $\tilde z$ for $\cm_{m,n}$ (right). In both cases the demands $\pi_i^n$ at destinations $i\in \{0,\ldots,m\}$ are shipped at zero cost directly from their twin sources $i$  (horizontal arrows). The transports differ in how they manage the excess supplies $\pi_i^m-\pi_i^n$ at the sources, to meet the demands at  destinations $j\in\{m\!+\!1,\ldots,n\}$.
The feasible transport $\tilde z$ distributes the excesses proportionally to the demands at destination nodes. In contrast, $z$ satisfies all the demands $j\in\{m+1,\ldots,n-1\}$ from source $m$, and all the remaining residual supplies are shipped to the last destination $n$.} 
\end{figure}

Noting that the $\cm_{m,n}$'s correspond to the feasible transport plan
\begin{equation}\tilde z_{i,j}=\left\{
\begin{array}{cl}
\pi_j^n&\text{for }i=j\in\{0,\ldots,m\}\\
\pi_i^m\pi^n_j&\text{for }i\in\{0,\ldots,m\} \text{ and } j\in\{m+1,\ldots,n\}
\\
0&\text{otherwise}
\end{array}
\right.
\end{equation}
it follows inductively that $d_{m,n}\leq \cm_{m,n}$, so the bounds $\cm_{m,n}$ need not be tight. This was already observed in \cite{bc18} for nonexpansive maps, although the difference between $\cm_{m,n}$ and $d_{m,n}$ was shown to vanish as $\alpha\to 1$. The plots in Figure \ref{fig6} suggest that the same phenomenon persists for all $\lip\in(0,1]$, with $\cm_{m,n}$ closely tracking the minimax-tight quantities $d_{m,n}$.
For $\alpha\in[\frac12,1]$, Proposition \ref{cmndmn} provides the following estimate for the absolute gap between the corresponding error bounds
$$0\leq \bm_m-\Rtight_m\leq (1-\alpha)^2(m+1)\lipalpha^{\hspace{0.3ex}m}/\alpha.$$
This is particularly informative when $\alpha$ is close to one, or whenever $(1-\alpha)^2(m+1)/\alpha$ is small. Actually, our numerical computations show substantially smaller gaps over a broad range of parameters, particularly for the optimal parameter $\alpha_\lip(m)$ (see Appendix \ref{AppE_Numerical}). We conjecture that the relative error $\bm_m/\Rtight_m$ is bounded by $\sqrt{3/2}\approx 1.23$
for all $\alpha\in[\frac12,1]$,  $\lip\in[\frac12,1]$, and $m\geq 1$. However, establishing a rigorous upper bound for  $\bm_m/\Rtight_m$ remains as an open question.

\begin{figure}[ht!]
  \centering
  \begin{minipage}{.3\linewidth}
    \centering
    \includegraphics[width=\linewidth]{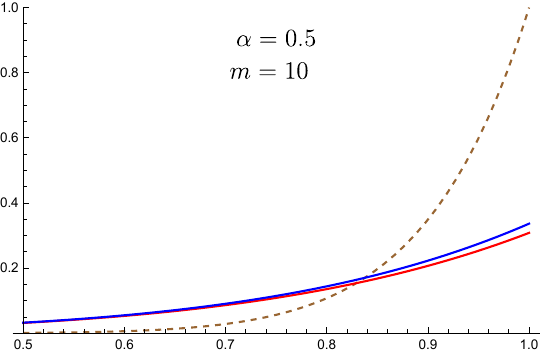}
  \end{minipage}
  \begin{minipage}{.3\linewidth}
    \centering
    \includegraphics[width=\linewidth]{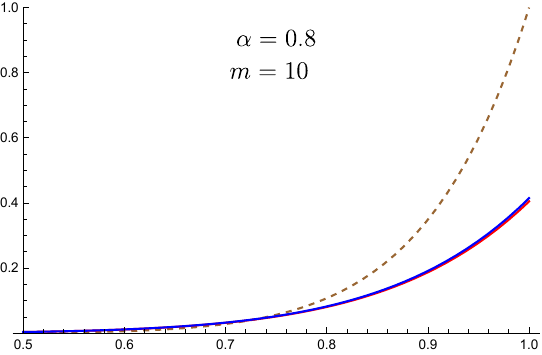}
  \end{minipage}
  \begin{minipage}{.3\linewidth}
    \centering
    \includegraphics[width=\linewidth]{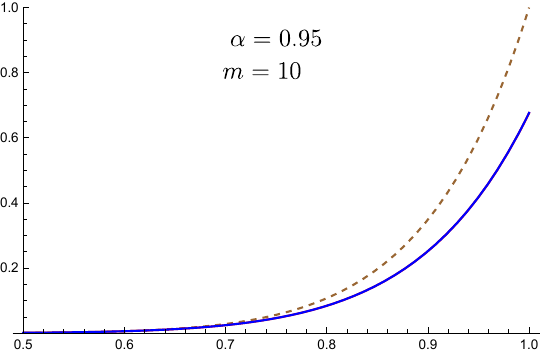}
  \end{minipage}

  \caption{Residual bounds after $m=10$ iterations as a function of $\lip\in[\frac12,1]$, for different relaxation parameters $\alpha$. The brown dashed curve is the normalized
  coarse   Banach--Picard bound $\lip^m\!$. The blue and red curves---barely distinguishable in the center and right panels---correspond, respectively, to the bounds $\bm_m$ and 
  $\Rtight_m$ for \eqref{eq:km_a}. All three curves coincide in the limit  $\alpha\uparrow 1$.}
  \label{fig6}
\end{figure}

\section{Probabilistic and integral representations of \texorpdfstring{$\cm_{m,n}$}{cmn}}
\label{sec:3}
Although the optimal transport bounds $\Rtight_m$ are minimax optimal and therefore
$\Rtight_m\leq \bm_m$, they are defined by an involved recursion and, to the best of our knowledge, no closed form formula is currently available. 
This is the major advantage of the suboptimal $\cm_{m,n}$'s and $\bm_m$'s, which have explicit
analytic expressions.
In this section we explore alternative probabilistic and  integral representations for the $\cm_{m,n}$'s, which provide more tractable expressions that will prove useful in the analysis of the $\bm_m$'s in the next section. 

\subsection{Representation by binomial distributions}\label{sec:3.1}
Let us consider two mutually independent i.i.d. sequences  of Bernoulli variables $B_i\sim\Ber(\alpha)$ and $D_i\sim\Ber(\palpha)$ with $\palpha\triangleq\alpha\,\lip/\lipalpha$, and the counting processes
$$X_n=B_1+\cdots+B_n\qquad; \qquad Y_m=D_1+\cdots+D_m.$$
 In order to derive a simpler expression for $\cm_{m,n}$ we exploit the following identity.

\begin{lemma}\label{L3}
For all $n,k\in\N$ we have
$\PP\!\bigl(X_n>k\bigr)=\alpha\sum_{j=0}^{n-1}\binom{n-1-j}{k}\alpha^k(1\!-\!\alpha)^{n-1-j-k}$.
\end{lemma}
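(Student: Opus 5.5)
The natural route is to condition on the time at which the counting process $X$ reaches level $k+1$, and then reindex the sum. The event $\{X_n>k\}$ is the event $\{X_n\geq k+1\}$, which occurs exactly when the $(k+1)$-th success among $B_1,\dots,B_n$ happens at some time $t\le n$. These events are disjoint for different $t$. So
\[
\PP(X_n>k)=\sum_{t=k+1}^{n}\PP(\text{the $(k+1)$-th success occurs at time }t).
\]
This is a negative binomial decomposition. The $(k+1)$-th success occurs at time $t$ exactly when $B_t=1$ and exactly $k$ of $B_1,\dots,B_{t-1}$ equal one. By independence this has probability
\[
\alpha\binom{t-1}{k}\alpha^{k}(1-\alpha)^{t-1-k}.
\]

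Next I reindex with $j=n-t$, so that $t-1=n-1-j$ and $j$ runs from $0$ to $n-k-1$. This gives
\[
\alpha\sum_{j=0}^{n-1-k}\binom{n-1-j}{k}\alpha^k(1-\alpha)^{n-1-j-k}.
\]
The sum can then be extended to $j\le n-1$. For $j>n-1-k$ we have $n-1-j<k$, so $\binom{n-1-j}{k}=0$ by the convention stated in the Remark after Theorem~\ref{thm:main}. This yields exactly the claimed formula.

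It remains to check the degenerate cases. If $k\ge n$, both sides vanish: the left side because $X_n\le n$, and the right side because every binomial coefficient is zero. If $\alpha=1$, the only nonzero terms have exponent $n-1-j-k=0$, and with the convention $0^0=1$ the formula still holds. Negative powers of $(1-\alpha)$ never appear, since the nonzero terms have $n-1-j\ge k$.

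I expect no real obstacle. The only points needing care are the index shift $t\leftrightarrow n-j$ and the use of the binomial conventions to handle the range of summation and the case $\alpha=1$.
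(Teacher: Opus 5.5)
Your proof is correct and follows essentially the same route as the paper: both reindex the sum (the paper uses $l=n-1-j$, your $t=n-j$) and read each summand as the probability that the $(k+1)$-st success occurs at a given trial, so the sum equals $\PP(X_n>k)$. Your explicit check of the boundary cases ($k\ge n$ and $\alpha=1$) is a small addition that the paper leaves implicit.
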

\begin{proof}
With a change of index $l=n-1-j$ the sum on the right hand side becomes
$\sum_{l=0}^{n-1}\binom{l}{k}\alpha^{k+1}(1-\alpha)^{l-k}$.
The term $\binom{l}{k}\,\alpha^{k+1}(1-\alpha)^{l-k}$ is zero for $l<k$, and for $l\geq k$ is the probability of having
exactly $k$ successes among the first $l$ trials and a success at trial $l+1$.
Thus, denoting $T_{k+1}$ the trial of the $(k+1)$-st success the sum  above is equal to $\PP(T_{k+1}\leq n)$, and the conclusion follows from the standard relation
$\mathbb{P}(T_{k+1}\le n)=\mathbb{P}(X_n>k)$.
\end{proof}

\begin{proposition}\label{Prop4}
For  all $0\leq m\le n$ we have
$\cm_{m,n}
=
\lipalpha^{\hspace{0.3ex}m}\,\PP\bigl(X_n\!>\! Y_m\bigr)
-
\lipalpha^{\hspace{0.2ex}n}\,\PP\bigl(X_m\!>\!Y_n\bigr)$.
\end{proposition}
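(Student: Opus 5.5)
We start from the explicit formula \eqref{eq:cmn}--\eqref{eq:latticepaths} of Theorem~\ref{thm:main} and split $\cm_{m,n}$ into the two sums coming from the positive and negative terms of $L^{k,j}_{m,n}$:
\[
\cm_{m,n}=A_{m,n}-A'_{m,n},\qquad
A_{m,n}=\alpha\sum_{j=0}^{n-1}\sum_{k\ge0}\binom{m}{k}\binom{n-1-j}{k}(\alpha^2\lip)^k(1-\alpha)^{n+m-1-j-2k},
\]
with $A'_{m,n}$ defined in the same way but using $\binom{n}{k}\binom{m-1-j}{k}$. By Krattenthaler's convention the $k$-range can be left unrestricted, and all exponents of $(1-\alpha)$ that actually occur are nonnegative. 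The second sum has the same shape as the first with the roles of $m$ and $n$ exchanged: the exponent $n+m-1-j-2k$ is symmetric in $(m,n)$, and the terms with $j\ge m$ vanish. Hence $A'_{m,n}=A_{n,m}$, where $A_{n,m}$ is the same expression with the outer sum over $j=0,\dots,m-1$. It therefore suffices to prove the single identity
\[
A_{m,n}=\lipalpha^{\,m}\,\PP(X_n>Y_m)\qquad\text{for all } m,n\ge0,
\]
and then apply it with $(m,n)$ and with $(n,m)$.

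To prove this identity, we factor each summand as
\[
\Big[\binom{m}{k}(\alpha\lip)^k(1-\alpha)^{m-k}\Big]\cdot\Big[\alpha\binom{n-1-j}{k}\alpha^k(1-\alpha)^{n-1-j-k}\Big].
\]
The exponents match: $(m-k)+(n-1-j-k)=n+m-1-j-2k$, and the powers of $\alpha$ combine to $\alpha^{2k+1}\lip^k$. Summing the second bracket over $j$ gives $\PP(X_n>k)$ by Lemma~\ref{L3}. For the first bracket we use $\palpha=\alpha\lip/\lipalpha$ and $1-\palpha=(1-\alpha)/\lipalpha$, so that
\[
\binom{m}{k}(\alpha\lip)^k(1-\alpha)^{m-k}=\lipalpha^{\,m}\binom{m}{k}\palpha^k(1-\palpha)^{m-k}=\lipalpha^{\,m}\,\PP(Y_m=k).
\]
Therefore
\[
A_{m,n}=\lipalpha^{\,m}\sum_k\PP(Y_m=k)\,\PP(X_n>k)=\lipalpha^{\,m}\,\PP(X_n>Y_m),
\]
where the last step uses the independence of $X$ and $Y$.

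The main point needing care is the boundary bookkeeping, which we check separately.
\begin{itemize}
\item The case $\alpha=1$: here $0^0=1$ and $1-\palpha=0$ must be handled consistently, and the identity still holds termwise.
\item The degenerate cases $m=0$ or $n=0$: for example $A_{0,n}=\PP(X_n>0)=1-(1-\alpha)^n$, which matches $\cm_{0,n}$ because $\PP(X_0>Y_n)=0$.
\item The justification that the negative term equals $A_{n,m}$ exactly: this uses that $\binom{m-1-j}{k}=0$ for $j\ge m$, so the range $j\le n-1$ reduces to $j\le m-1$, which is precisely the summation range of Lemma~\ref{L3} with $n$ replaced by $m$.
\end{itemize}
The case $m=n$ then gives $0$, consistent with $\cm_{n,n}=0$.
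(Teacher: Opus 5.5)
Your proposal is correct and follows essentially the same route as the paper. Both arguments split $\cm_{m,n}$ into the positive and negative parts of $L^{k,j}_{m,n}$, factor each summand, and evaluate the $j$-sum by Lemma~\ref{L3}. Both then rewrite the remaining factor using $\alpha\lip=\palpha\,\lipalpha$ and $1-\alpha=(1-\palpha)\,\lipalpha$, and condition on $Y_m$. Your observation that the negative part is exactly $A_{n,m}$, once the vanishing terms with $j\ge m$ are dropped, is just a cleaner way of phrasing the paper's ``symmetric argument'' for $B_{m,n}$.
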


\begin{proof} The equations \eqref{eq:cmn}-\eqref{eq:latticepaths} can be written as $\cm_{m,n}=A_{m,n}-B_{m,n}$ with
 \begin{align*}
A_{m,n}&=\mbox{$\alpha\sum_{j=0}^{n-1} \sum_{k=0}^m
\binom{m}{k}\!\binom{n-1-j}{k}(\alpha^2\lip)^k(1\!-\!\alpha)^{n+m-1-j-2k},$}\\
B_{m,n}&=\mbox{$\alpha\sum_{j=0}^{n-1} \sum_{k=0}^m
\binom{n}{k}\!\binom{m-1-j}{k}(\alpha^2\lip)^k(1\!-\!\alpha)^{n+m-1-j-2k}$}.
\end{align*}
Exchanging the order of the sums in $A_{m,n}$ we can rewrite it as
 \begin{align*}
A_{m,n}&=\mbox{$\sum_{k=0}^m\binom{m}{k}(\alpha\lip)^k(1-\alpha)^{m-k} \left(\alpha\sum_{j=0}^{n-1} 
\!\binom{n-1-j}{k}\alpha^k(1\!-\!\alpha)^{n-1-j-k}\right),$}\\
&=\mbox{$\lipalpha^m\sum_{k=0}^m\binom{m}{k}\,\palpha^k(1-\palpha)^{m-k} \;\PP(X_n>k),$}
\end{align*}
where this last equality follows from Lemma \ref{L3}, together with $\alpha\lip=\palpha\,\lipalpha$ and $1-\alpha=
(1-\palpha)\,\lipalpha$. Then, by conditioning on the events $\{Y_m=k\}$, we obtain $A_{m,n}=\lipalpha^m\,\PP(X_n>Y_m)$.

Similarly, for $B_{m,n}$ we observe that $\binom{m-1-j}{k}$ is zero for $k=m$ and  also for $j\geq m$, so one can restrict the inner sum to $m-1$
and extend the outer sum to $n$. Then, a symmetric argument as above yields $B_{m,n}=\lipalpha^n\,\PP(X_m>Y_n)$, which combined with the expression for $A_{m,n}$ proves the result. 
\end{proof}

\subsection{Integral representations}\label{sec:trigo}
Starting from Proposition \ref{Prop4} and using Cauchy's residue theorem, we can represent the $\cm_{m,n}$'s by some specific trigonometric integrals. 

To this end, consider the curve 
$z_{\alpha}(\theta)=1-\alpha+\alpha\,\sigma\,e^{\ii\,\theta}$ in the complex plane. For $\theta\in(-\pi,\pi)$ write $z_{\alpha}(\theta)=r_{\!\alpha}(\theta)e^{\ii\,\varphi_{\!\alpha}(\theta)}$ in polar form 
with $r_{\!\alpha}(\theta)\triangleq|z_{\alpha}(\theta)|$ and $\varphi_{\!\alpha}(\theta)\triangleq\arg(z_{\alpha}(\theta))\in (-\pi,\pi)$ the principal argument, so that
 $r_{\!\alpha}(-\theta)=r_{\!\alpha}(\theta)$ and $\varphi_{\!\alpha}(-\theta)=-\varphi_{\!\alpha}(\theta)$.
 At $\theta=\pi$ we define $r_{\!\alpha}(\pi)$ and $\varphi_{\!\alpha}(\pi)$ by their left limits at $\theta\uparrow \pi$.

 When $\lip=1$, the formulas \eqref{eq:general-corrected}, \eqref{eq:cmmp1_Chebyshev0}, and \eqref{eq:cmmp1_Chebyshev22} are understood as improper integrals at the origin, the singularity being integrable, as established by dominated convergence in the proof.
 
\begin{proposition}\label{prop:int} 
With $r_{\!\alpha}(\theta)$ and $\varphi_{\!\alpha}(\theta)$ as above, for all $0\leq m\le n$ we have 
 \begin{equation}\label{eq:general-corrected}
\cm_{m,n}
=
\lipalpha^{\hspace{0.3ex}m}-\lipalpha^{\hspace{0.2ex}n}
+
\frac{2\sigma}{\pi}
\int_0^\pi
\frac{
r_{\!\alpha}(\theta)^{m+n}
\sin\!\bigl((n\!-\!m)\varphi_{\!\alpha}(\theta)\bigr)\sin\theta
}{
1-2\sigma\cos\theta+\sigma^2
}\,d\theta.
\end{equation}
\end{proposition}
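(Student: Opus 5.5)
The plan is to start from Proposition \ref{Prop4} and write both probabilities as coefficient sums of Laurent polynomials, which can then be extracted by integrating over the unit circle (Cauchy's coefficient formula, i.e.\ the residue theorem). Using $\alpha\lip=\palpha\lipalpha$ and $1-\alpha=(1-\palpha)\lipalpha$, we have
\[
\lipalpha^{m}\,\EE\bigl[w^{X_n-Y_m}\bigr]=f(w)\triangleq(1-\alpha+\alpha w)^n\,(1-\alpha+\alpha\lip/w)^m .
\]
Writing $f(w)=\sum_k a_k w^k$, we get $\lipalpha^m\PP(X_n=Y_m+k)=a_k$, and $\sum_k a_k=f(1)=\lipalpha^m$. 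Since the positive-index tail $\sum_{k\geq1}a_k$ leads to a divergent geometric series when $\sigma\le 1$, I pass to the complement:
\[
\lipalpha^m\PP(X_n>Y_m)=\lipalpha^m-\sum_{j\ge0}a_{-j}.
\]
The same applies to the second term, with $g(w)=(1-\alpha+\alpha w)^m(1-\alpha+\alpha\lip/w)^n$ and coefficients $b_k$. This gives
\[
\cm_{m,n}=\lipalpha^m-\lipalpha^n-\sum_{j\ge0}(a_{-j}-b_{-j}),
\]
which already produces the first two terms of \eqref{eq:general-corrected}.

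Next I substitute $w=\sigma e^{\ii\theta}$. Then $f(\sigma e^{\ii\theta})=z_\alpha(\theta)^n\,\overline{z_\alpha(\theta)}^{\,m}=r_{\!\alpha}(\theta)^{m+n}e^{\ii(n-m)\varphi_{\!\alpha}(\theta)}$, and $g(\sigma e^{\ii\theta})$ is its complex conjugate. Cauchy's formula gives
\[
a_{-j}=\frac{\sigma^{j}}{2\pi}\int_{-\pi}^{\pi}f(\sigma e^{\ii\theta})\,e^{\ii j\theta}\,d\theta .
\]
For $\sigma<1$, summing over $j\ge0$ under the integral (the sum is finite anyway, or the series converges uniformly) gives the kernel $1/(1-\sigma e^{\ii\theta})$. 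Hence
\[
\sum_{j\ge0}(a_{-j}-b_{-j})=\frac{1}{2\pi}\int_{-\pi}^{\pi}\frac{2\ii\,r_{\!\alpha}^{m+n}\sin((n-m)\varphi_{\!\alpha})}{1-\sigma e^{\ii\theta}}\,d\theta .
\]
Writing $1/(1-\sigma e^{\ii\theta})=(1-\sigma\cos\theta+\ii\sigma\sin\theta)/(1-2\sigma\cos\theta+\sigma^2)$, the real part is even in $\theta$ while $\sin((n-m)\varphi_{\!\alpha})$ is odd (since $r_{\!\alpha}$ is even and $\varphi_{\!\alpha}$ is odd), so that contribution vanishes. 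The imaginary part yields an even integrand, and the result is $-\frac{2\sigma}{\pi}\int_0^\pi(\cdots)\,d\theta$. With the outer minus sign this matches \eqref{eq:general-corrected}.

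The main obstacle is the nonexpansive case $\lip=1$, where the kernel is singular at $\theta=0$. I would prove the formula for $\lip<1$ and let $\lip\uparrow1$. The left side $\cm_{m,n}$ is a polynomial in $\lip$ by \eqref{eq:cmn}, so it is continuous in $\lip$. On the right side, near $\theta=0$ the factor $\sin((n-m)\varphi_{\!\alpha}(\theta))$ is $O(\theta)$ uniformly in $\sigma$ near $1$, $\sin\theta=O(\theta)$, and $1-2\sigma\cos\theta+\sigma^2\ge 2\sigma(1-\cos\theta)\gtrsim\theta^2$. The integrand is therefore uniformly bounded, and dominated convergence gives the improper integral at $\lip=1$. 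An alternative route is to take the contour $|w|=\rho\sigma$ with $\rho<1$ and let $\rho\uparrow1$, but the limiting argument in $\lip$ seems cleanest.
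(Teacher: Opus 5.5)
Your proof is correct and follows essentially the paper's route. It starts from Proposition~\ref{Prop4}, evaluates the generating function on the circle of radius $\sigma$, where it becomes $z_\alpha(\theta)^n\overline{z_\alpha(\theta)}^{\,m}$, rationalizes the geometric-series kernel into $(1-\sigma\cos\theta\pm\ii\sigma\sin\theta)/(1-2\sigma\cos\theta+\sigma^2)$, uses parity in $\theta$, and treats $\lip=1$ by continuity and dominated convergence. The only difference is bookkeeping: the paper sums the positive-index coefficients with a finite geometric sum and gets $\lipalpha^m-\lipalpha^n$ as the residue $G(\sigma)$, whereas you pass to the complementary coefficients and use the infinite series, which converges for $\sigma<1$, so the same constant appears as the total mass $f(1)-g(1)$.
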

\begin{proof} The case $n=m$ being trivial, we take $n>m$.
Let us assume first that $\lip\in (0,1)$ and set $g_d\triangleq
\lipalpha^{\hspace{0.3ex}m}\,\PP\bigl(X_n-Y_m=d\bigr)
-
\lipalpha^{\hspace{0.2ex}n}\,\PP\bigl(X_m-Y_n=d\bigr)$ so that $\cm_{m,n}=\sum_{d=1}^ng_d$.
Consider the  meromorphic complex function $G(z)=\sum_{d=-n}^ng_d\,(\sigma/z)^d$. Integrating over the unit circle, Cauchy's residue theorem gives
\begin{align}
\cm_{m,n}&=\frac{1}{2\pi \ii}\int_{|z|=1}\mbox{$
\frac{1}{\sigma}(\sum_{k=0}^{n-1}(z/\sigma)^k$})G(z)\,dz\nonumber\\
&=\frac{1}{2\pi \ii}\int_{|z|=1}\mbox{$
\frac{1-(z/\sigma)^n}{\sigma-z}$}G(z)\,dz\nonumber\\
&=\frac{1}{2\pi \ii}\int_{|z|=1}\mbox{$
\frac{G(z)}{\sigma-z}$}\,dz+\frac{1}{2\pi \ii}\int_{|z|=1}\mbox{$
\frac{(z/\sigma)^n}{z-\sigma}$}G(z)\,dz.\label{eq:integral}
\end{align}
The function $G(z)$ can be expressed in the form
\begin{align*}
G(z)&=\lipalpha^{\hspace{0.3ex}m}\,\EE\big[(\sigma/z)^{X_n-Y_m}\big]-\lipalpha^{\hspace{0.2ex}n}\,\EE\big[(\sigma/z)^{X_m-Y_n}\big]\\
&=\lipalpha^{\hspace{0.3ex}m}\,\EE\big[(\sigma/z)^{X_n}\big]\,\EE\big[(z/\sigma)^{Y_m}\big]-\lipalpha^{\hspace{0.2ex}n}\,\EE\big[(\sigma/z)^{X_m}\big]\,\EE\big[(z/\sigma)^{Y_n}\big]\\
&=\lipalpha^{\hspace{0.3ex}m}\big(1-\alpha+\alpha\,\sigma/z\big)^n\big(1-\palpha+\palpha\, z/\sigma\big)^m-\lipalpha^{\hspace{0.2ex}n}\,\big(1-\alpha+\alpha\,\sigma/z\big)^m\big(1-\palpha+\palpha\, z/\sigma\big)^n\\
&=\big(1-\alpha+\alpha\,\sigma/z\big)^n\big(1-\alpha+\alpha\,\sigma z\big)^m-\big(1-\alpha+\alpha\,\sigma/z\big)^m\big(1-\alpha+\alpha\,\sigma z\big)^n
\end{align*}
where the third line follows since the counting processes are sums of independent Bernoullis, and the fourth by using $\lipalpha(1\!-\!\palpha)=(1\!-\!\alpha)$ and $\lipalpha \,\palpha/\sigma=\alpha\,\sigma$.

Now, the function $H(z)=\mbox{$G(z)
{(z/\sigma)^n}/{(z-\sigma)}$}$ in the second integral in \eqref{eq:integral} has only one simple pole at $\sigma$ with residue $\mathop{\rm Res}(H,\sigma)=G(\sigma)
=\lipalpha^{\hspace{0.3ex}m}-\lipalpha^{\hspace{0.2ex}n}$
so that
\begin{align*}
\cm_{m,n}
&=\frac{1}{2\pi}\int_{-\pi}^{\pi}
\frac{G(e^{\ii\,\theta})}{\sigma-e^{\ii\,\theta}}e^{\ii\,\theta}\,d\theta+\big(\lipalpha^{\hspace{0.3ex}m}-\lipalpha^{\hspace{0.2ex}n}\big).
\end{align*}
Rationalizing the fraction in the first integral we have
$$\frac{e^{\ii\,\theta}}{\sigma-e^{\ii\,\theta}}=\frac{(\sigma e^{\ii\,\theta}-1)}{1-2\sigma\cos\theta+\sigma^2}=\frac{(\ii\,\sigma\sin\theta+\sigma\cos\theta-1)}{1-2\sigma\cos\theta+\sigma^2}.$$
On the other hand, $G(e^{i\theta})={\overline z}^n z^m-{\overline z}^m z^n$ with $z=z_{\alpha}(\theta)=r_{\!\alpha}(\theta)\,e^{\ii\,\varphi_{\!\alpha}(\theta)}$, so that
\begin{align*}
    G(e^{i\theta}) &=-2\ii \,r_{\!\alpha}(\theta)^{m+n}\sin\big((n\!-\!m)\varphi_{\!\alpha}(\theta)\big),
\end{align*}
and therefore 
\begin{align*}
\cm_{m,n}
&=\lipalpha^{\hspace{0.3ex}m}-\lipalpha^{\hspace{0.2ex}n}+\frac{1}{\pi}\int_{-\pi}^{\pi}
\frac{r_{\!\alpha}(\theta)^{m+n}\sin\big((n\!-\!m)\varphi_{\!\alpha}(\theta)\big)(\ii(1\!-\!\sigma \cos\theta)+\sigma\sin\theta)}{1-2\sigma\cos\theta+\sigma^2}
\,d\theta.
\end{align*}
The symmetries $r_{\!\alpha}(-\theta)=r_{\!\alpha}(\theta)$ and $\varphi_{\!\alpha}(-\theta)=-\varphi_{\!\alpha}(\theta)$
imply that the imaginary part in this integral vanishes, while the real part is twice the integral over $[0,\pi]$, which yields 
\eqref{eq:general-corrected}.

Now, the boundary case $\lip=1$ follows from $\lip<1$ since both sides of \eqref{eq:general-corrected} are continuous in $\lip$. This is clear for $\cm_{m,n}$ which is polynomial in $\lip$. The integral on the right is slightly more delicate because the denominator $(1-\sigma)^2+2\sigma(1-\cos\theta)$ vanishes at $\theta=0$ for $\sigma=1$ (recall that $\sigma=\sqrt\lip$). 
However, using $0\leq r_{\!\alpha}(\theta)\leq 1$ and $0\leq\varphi_{\!\alpha}(\theta)\leq \theta$ for $\theta\in (0,\pi)$, we have 
$$2\sigma\left|\frac{r_{\!\alpha}(\theta)^{m+n}\sin\big((n\!-\!m)\varphi_{\!\alpha}(\theta)\big)\sin\theta}{1-2\sigma\cos\theta+\sigma^2}\right|
\le\frac{(n-m)\,\theta\sin\theta}{1-\cos\theta}\leq 2(n-m),$$
so that by dominated convergence
 the integral is continuous at $\sigma=1$,
or equivalently at $\lip=1$.
\end{proof}

\noindent{\sc Remark.} Using the Chebyshev polynomials $U_k(x)$ and the identity
$\sin(k\,x)=U_{k-1}(\cos x)\,\sin x$,
we can rewrite \eqref{eq:general-corrected} in the alternative form (with the convention $U_{-1}(\cdot)\equiv 0$)
\begin{equation*}
\cm_{m,n}
=
\lipalpha^{\hspace{0.3ex}m}-\lipalpha^{\hspace{0.2ex}n}
+
\frac{2\alpha\lip}{\pi}
\int_0^\pi
\frac{
r_{\!\alpha}(\theta)^{m+n-1}
U_{n-m-1}\!\left((1-\alpha+\alpha\sigma\cos\theta)/{r_{\!\alpha}(\theta)}\right)
\sin^2\theta
}{
1-2\sigma\cos\theta+\sigma^2
}\,d\theta.
\end{equation*}
This representation is understood for $r_{\!\alpha}(\theta)>0$. At the exceptional point $\theta=\pi$ and $\alpha=1/(1+\sigma)$, the
integrand is defined by continuous extension. In fact, its value may be assigned arbitrarily at this single point without affecting the integral.

\section{Integral formulas and optimization of the error bounds \texorpdfstring{$\bm_m$}{bm}}
\label{sec:4}

Let us now focus on the contiguous terms
$\cm_{m,m+1}$, which provide the error bounds for the residuals $$\|x_m-Tx_m\|\leq \kappa\,\cm_{m,m+1}/\alpha=\kappa\,\bm_{m}.$$ 
We proceed to establish our second main result Theorem \ref{Prop6}. Recalling the auxiliary parameters
\begin{equation}\label{notation}
    \left\{\begin{array}{l}
    \salpha=(1-\alpha+\alpha\,\sigma)^2,\\
    ~\eta\,=4\alpha(1\!-\!\alpha)\sigma/\salpha,\\
    ~\xi\;=4\sigma/(1\!-\!\sigma)^2,
    \end{array}\right.
\end{equation}
the result states the following three characterizations  
\begin{equation}\label{eq:cmmp1_Chebyshev0}
\bm_{m}
=
(1-\lip)\,\lipalpha^{\hspace{0.3ex}m}
+
\frac{2\lip}{\pi}
\int_0^\pi
\frac{r_{\!\alpha}(\theta)^{2m}\sin^2\theta}{1-2\sigma\cos\theta+\sigma^2}\;d\theta,
\end{equation}
\begin{equation} \label{eq:cmmp1_Chebyshev22}
\bm_{m}=(1-\lip)\,\lipalpha^{\hspace{0.3ex}m}+
\left(\frac{8\lip}{\pi}
\int_0^1
\frac{\sqrt{x(1-x)}\;(1-\eta\,x)^{m}}{(1-\sigma)^2+4\sigma\,x}\,dx\right)\salpha^{\hspace{0.2ex}m},
\end{equation}
\begin{equation}\label{eq:cmmp1_Chebyshev55}
\bm_{m}
=\left\{\begin{array}{ll}
(1-\lip)\,\lipalpha^{\hspace{0.3ex}m}
+
\frac{\lip}{(1-\sigma)^2}\,\Fone\big(\mbox{$\frac{3}{2}$};-m,1;3;\eta,-\xi\big)\,\salpha^{\hspace{0.2ex}m}&\text{ if }\lip<1,\\[2ex]
{}\Gauss\!\left(\mbox{$\frac12$},-m;2;4\alpha(1\!-\!\alpha)\right)&\text{ if }\lip=1.
\end{array}\right.
\end{equation}

\subsection{Proof of Theorem \ref{Prop6}}
The first formula \eqref{eq:cmmp1_Chebyshev0} follows directly from \eqref{eq:general-corrected} by using the identity $\sin\varphi_{\!\alpha}(\theta)=\alpha\,\sigma\sin\theta/r_{\!\alpha}(\theta)$.
Next, \eqref{eq:cmmp1_Chebyshev22} follows with the change of variable $x=\sin^2(\theta/2)$, noting that 
 \begin{equation*}
 r_{\!\alpha}(\theta)^2=(1\!-\!\alpha)^2+2\alpha(1\!-\!\alpha)\sigma\cos\theta+\alpha^2\sigma^2=\big(1-\eta\,\sin^2(\theta/2)\big)\,\salpha,
 \end{equation*}
so that
$r_{\!\alpha}(\theta)^{2m}=(1-\eta\,x)^m\salpha^{\hspace{0.2ex}m}$,  $\sin^2\theta=4x(1-x)$, $\cos\theta=1-2x$, and $d\theta=dx/\sqrt{x(1\!-\!x)}$. 

Now, for $\lip<1$ we can factor the term $(1-\sigma)^2$ in the denominator of \eqref{eq:cmmp1_Chebyshev22}, and then  \eqref{eq:cmmp1_Chebyshev55} follows by matching the integral with
 Euler's  representation of $\Fone(a;b_1,b_2;c;u,v)$ (see \eqref{Eq:Euler_F1} in Appendix \ref{App:Hypergeom}) for $a=\frac32$, $b_1=-m$, $b_2=1$, $c=3$, $u=\eta$, $v=-\xi$, and $\frac{\Gamma(c)}{\Gamma(a)\Gamma(c-a)}=\frac{8}{\pi}$.
Similarly, when $\lip=1$ we have $\lipalpha=\salpha=\sigma=1$ so that \eqref{eq:cmmp1_Chebyshev22} becomes
\begin{align*}
\bm_{m}
&=
\frac{2\sigma}{\pi}\int_0^1 (1-\eta\,x)^mx^{-1/2}(1-x)^{1/2}\,dx,
\end{align*}
and then \eqref{eq:cmmp1_Chebyshev55} follows by matching this integral 
with Euler's representation \eqref{Eq:Euler_2F1} of ${}\Gauss(a,b;c;z)$  for 
$a=\frac12$, $b=-m$, $c=2$, $z=\eta=4\alpha(1\!-\!\alpha)$,
and $\frac{\Gamma(c)}{\Gamma(a)\Gamma(c-a)}=\frac{2}{\pi}$. \hfill$\Box$

\vspace{2ex}
Although \eqref{Eq:RmNuevo} gives a simpler $\Fone$ expression for $\bm_m$ that avoids the extra parameters $\salpha,\eta$ and $\xi$, an advantage of \eqref{eq:cmmp1_Chebyshev22} is that it readily implies the following weaker but simpler upper bounds. Notice that $\eta\leq 1$ since $\salpha-4\alpha(1-\alpha)\sigma=(1-\alpha-\alpha\sigma)^2\geq 0$.
\begin{corollary}\label{prop7}
For all $m\geq 0$ we have $\bm_{m}\le \lipalpha^{\hspace{0.3ex}m}$.
Moreover, with the notations of \emph{Theorem \ref{Prop6}}, for all $\alpha\in(0,1)$ we have the following three alternative upper bounds  (with \eqref{m321} interpreted as $\infty$ if $\lip=1$)
\begin{align}
\label{eq:bound}
\bm_{m}
&\le
(1-\lip)\,\lipalpha^{\hspace{0.3ex}m}
+
{\sigma }{}\Gauss(\mbox{$\frac12$},-m;2;\eta)\,\salpha^{\hspace{0.2ex}m},\\
\label{eq:bound2}
\bm_{m}&\leq
(1-\lip)\,\lipalpha^{\hspace{0.3ex}m}
+
\frac{2\sigma }{\sqrt{\pi\eta}}\frac{\Gamma(m+1)}{\Gamma(m+3/2)}\,\salpha^{\hspace{0.2ex}m},\\
\bm_m&
\leq (1-\lip)\lipalpha^{\hspace{0.3ex}m}+
\frac{4\lip}{\sqrt{\pi}\eta^{3/2}(1-\sigma)^2}
\frac{\Gamma(m+1)}{\Gamma(m+5/2)}\,\salpha^{\hspace{0.2ex}m}.\label{m321}
\end{align}
\end{corollary}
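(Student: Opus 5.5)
The plan is to derive all four inequalities from the integral representations of Theorem \ref{Prop6}: the first from \eqref{eq:cmmp1_Chebyshev0}, the other three from \eqref{eq:cmmp1_Chebyshev22}. In each case we bound the integrand pointwise and then evaluate the resulting Beta-type integral.

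\emph{The bound $\bm_m\le\lipalpha^{\hspace{0.3ex}m}$.} We start from \eqref{eq:cmmp1_Chebyshev0}.
\begin{itemize}
\item By the triangle inequality, $r_{\!\alpha}(\theta)=|1-\alpha+\alpha\sigma e^{\ii\theta}|\le 1-\alpha+\alpha\sigma$, so $r_{\!\alpha}(\theta)^{2m}\le \salpha^{\hspace{0.2ex}m}$.
\item Jensen's inequality for the square gives $\salpha=(1-\alpha+\alpha\sigma)^2\le 1-\alpha+\alpha\sigma^2=\lipalpha$.
\item We then use the classical identity $\frac{2}{\pi}\int_0^\pi \frac{\sin^2\theta}{1-2\sigma\cos\theta+\sigma^2}\,d\theta=1$ for $0\le\sigma<1$. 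It follows from the Poisson-kernel expansion $\frac{1-\sigma^2}{1-2\sigma\cos\theta+\sigma^2}=1+2\sum_{k\ge1}\sigma^k\cos k\theta$ after writing $\sin^2\theta=(1-\cos2\theta)/2$, or from a residue computation. For $\sigma=1$ the integrand is $1+\cos\theta$, which also integrates to $\pi/2$ times $2/\pi$, i.e.\ to $1$.
\end{itemize}
Combining these gives
$$\bm_m\le(1-\lip)\lipalpha^{\hspace{0.3ex}m}+\lip\,\salpha^{\hspace{0.2ex}m}\le(1-\lip)\lipalpha^{\hspace{0.3ex}m}+\lip\,\lipalpha^{\hspace{0.3ex}m}=\lipalpha^{\hspace{0.3ex}m}.$$
This step is the only one that needs the exact value of a nontrivial integral, so I expect it to be the main (mild) obstacle.

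\emph{The three bounds for $\alpha\in(0,1)$.} Here we use \eqref{eq:cmmp1_Chebyshev22}, noting that $0<\eta\le1$, so $1-\eta x\in[0,1]$ on $[0,1]$.
\begin{itemize}
\item For \eqref{eq:bound}, bound the denominator below by $4\sigma x$. The remaining integral is $\frac{2\sigma}{\pi}\int_0^1x^{-1/2}(1-x)^{1/2}(1-\eta x)^m\,dx$. Euler's representation \eqref{Eq:Euler_2F1} with $a=\frac12$, $b=-m$, $c=2$ identifies this as $\sigma\,\Gauss(\frac12,-m;2;\eta)$, using $B(\frac12,\frac32)=\pi/2$.
\item For \eqref{eq:bound2}, continue from the same integral, drop the factor $(1-x)^{1/2}\le1$, and substitute $y=\eta x$. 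This gives $\int_0^1 x^{-1/2}(1-\eta x)^m\,dx\le \eta^{-1/2}\int_0^1y^{-1/2}(1-y)^m\,dy=\eta^{-1/2}B(\frac12,m+1)$. Since $B(\frac12,m+1)=\sqrt\pi\,\Gamma(m+1)/\Gamma(m+\frac32)$, multiplying by $2\sigma/\pi$ yields the stated constant.
\item For \eqref{m321}, take $\lip<1$ (for $\lip=1$ the bound is $\infty$ and there is nothing to prove). Bound the denominator below by $(1-\sigma)^2$ and use $\sqrt{x(1-x)}\le x^{1/2}$. The same substitution gives $\int_0^1x^{1/2}(1-\eta x)^m\,dx\le\eta^{-3/2}B(\frac32,m+1)=\eta^{-3/2}\frac{\sqrt\pi}{2}\frac{\Gamma(m+1)}{\Gamma(m+5/2)}$. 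Multiplying by $8\lip/(\pi(1-\sigma)^2)$ produces $\frac{4\lip}{\sqrt\pi\,\eta^{3/2}(1-\sigma)^2}\frac{\Gamma(m+1)}{\Gamma(m+5/2)}$.
\end{itemize}
In each case the term $(1-\lip)\lipalpha^{\hspace{0.3ex}m}$ and the factor $\salpha^{\hspace{0.2ex}m}$ carry over unchanged from \eqref{eq:cmmp1_Chebyshev22}.
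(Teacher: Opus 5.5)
Your proposal is correct and follows the paper's proof essentially step by step. It uses the same pointwise relaxations of the integrand in \eqref{eq:cmmp1_Chebyshev22}: dropping $(1-\sigma)^2$, then additionally $\sqrt{1-x}$, or dropping $4\sigma x$ and $\sqrt{1-x}$. These are followed by Euler's integral and the substitution $y=\eta x$ with Beta integrals, and your bound $r_{\!\alpha}(\theta)^{2m}\le s_\alpha^{m}$ in \eqref{eq:cmmp1_Chebyshev0} is just the $\theta$-form of the paper's step of dropping $(1-\eta x)^m$. One small slip: for $\sigma=1$ the integrand is $\sin^2\theta/(2-2\cos\theta)=(1+\cos\theta)/2$, not $1+\cos\theta$, though the value $1$ you conclude is right.
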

\begin{proof}
The bound $\bm_m\le\lipalpha^{\hspace{0.3ex}m}$ results from dropping the term $(1-\eta\,x)^m$ in the integral  \eqref{eq:cmmp1_Chebyshev22}, and using  $\frac{8}{\pi}\int_0^1
\frac{\sqrt{x}\,\sqrt{1-x}}{(1-\sigma)^2+4\sigma\,x}\,dx=1$ and  $\salpha\leq\lipalpha$.
If we instead drop $(1-\sigma)^2$ in the denominator we get 
\begin{align}\label{eq:otramas}
\bm_{m}
&\leq
(1-\lip)\,\lipalpha^{\hspace{0.3ex}m}
+
\frac{2\sigma \salpha^{\hspace{0.2ex}m}}{\pi}\int_0^1 (1-\eta\,x)^mx^{-1/2}(1-x)^{1/2}\,dx,
\end{align}
and \eqref{eq:bound} follows from Euler's integral representation of Gauss's hypergeometric $\!\Gauss$ (see \eqref{Eq:Euler_2F1}).
If we further remove the term $(1-x)^{1/2}$ in \eqref{eq:otramas} and
use the substitution \(y=\eta\, x\) we get
\begin{align*}
\bm_{m}
&\leq
(1-\lip)\,\lipalpha^{\hspace{0.3ex}m}
+
\frac{2\sigma \salpha^{\hspace{0.2ex}m}}{\pi\sqrt{\eta}}\int_0^\eta (1-y)^m y^{-1/2}\,d y.
\end{align*}
Since $\eta\leq 1$ this can be relaxed by extending the integration interval to $[0,1]$ which gives the Beta integral $
\mbox{$B\!\left(\frac12,m+1\right)$}
=
\sqrt{\pi}\,{\Gamma(m+1)}/{\Gamma(m+\frac32)}$, and
a direct substitution yields \eqref{eq:bound2}.
Similarly, by dropping the terms $\sqrt{1-x}$ and $4\sigma x$ in \eqref{eq:cmmp1_Chebyshev22}, and using the substitution $y=\eta\, x$, we get
\begin{align*}
\bm_m
&\le(1-\lip)\lipalpha^{\hspace{0.3ex}m}+
\mbox{$\frac{8\lip}{\pi(1-\sigma)^2\eta^{3/2}}\,\salpha^{\hspace{0.2ex}m}$}\!\!
\int_0^\eta\!\!\mbox{$
\sqrt{y}\,(1-y)^m\,dy$}.
\end{align*}
Extending the integral up to 1 we get the Beta integral 
$\frac12\sqrt{\pi}\,
\Gamma(m+1)/\Gamma(m+\frac{5}{2})$, which yields \eqref{m321}.
\end{proof}

\vspace{1ex}
\noindent{\sc Remarks.}\\[0.5ex]
{\em a)} For $\alpha,\lip\in (0,1)$ we have $\salpha<\lipalpha$ so that the first term $(1-\lip)\,\lipalpha^{\hspace{0.3ex}m}$ in all these upper bounds is  dominant for large $m$. However, as \(\lip\to 1\) this term vanishes and the second terms become relevant.\\[1ex]
{\em b)} Using the asymptotically tight inequalities $\frac{\Gamma(m+1)}{\Gamma(m+3/2)}\leq\frac{1}{\sqrt{m}}$ and
$\frac{\Gamma(m+1)}{\Gamma(m+5/2)}\leq \frac{1}{m^{3/2}}$ for $m\geq 1$ (see \citet{wendel}), the second terms in \eqref{eq:bound2} and \eqref{m321} behave as 
 $m^{-1/2}\salpha^{\hspace{0.2ex}m}$ and $m^{-3/2}\salpha^{\hspace{0.2ex}m}$ respectively.  
 While \eqref{m321} exhibits a faster decay,  the factor
$(1-\sigma)^2$ in the denominator degenerates as $\lip$ approaches 1.

\subsection{Optimizing the error bounds}
\label{sec:5}
Let us now consider a strict contraction with $\lip\in(0,1)$, and suppose we fix the number of iterations $m$. We seek to find 
$\alpha=\alpha_\lip(m)\in [0,1]$ that minimizes $\bm_m$, extending the polynomial $\bm_m$ continuously to $\alpha=0$.
Using the identity \eqref{eq:cmmp1_Chebyshev0} and denoting 
$$\sr(\alpha,\theta)\triangleq r_{\!\alpha}(\theta)^2=(1\!-\!\alpha)^2+2\alpha(1\!-\!\alpha)\sigma\cos\theta+\alpha^2\sigma^2,$$
the goal is to minimize 
\begin{equation}\label{eq:Phi-def}
\bm_{m}(\alpha)\triangleq
(1-\lip)\lipalpha^{\hspace{0.3ex}m}
+ \frac{2\lip}{\pi}\int_0^\pi
\frac{\sr(\alpha,\theta)^{m}\sin^2\theta}{1-2\sigma\cos\theta+\sigma^2}\, d \theta.
\end{equation}

\begin{proposition}\label{prop:strict-convex}
For each $m\ge 1$ and $\lip\in (0,1)$ the function $\alpha\mapsto\bm_{m}(\alpha)$ is strictly convex on $[0,1]$. Its unique minimizer $\alpha_\lip(m)\in[0,1]$ is such that $\alpha_\lip(m)=1$ if $\lip\le \tfrac12$ and  $\alpha_\lip(m)\in (\frac12,1)$ for $\lip>\tfrac12$.
\end{proposition}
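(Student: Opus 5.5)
The plan is to work directly with the integral representation \eqref{eq:Phi-def}, which is a polynomial in $\alpha$, so differentiating under the integral sign is harmless. For strict convexity I will examine the two terms separately. The first term is $(1-\lip)\lipalpha^{\hspace{0.3ex}m}=(1-\lip)\bigl(1-\alpha(1-\lip)\bigr)^m$, a power of a positive affine function of $\alpha$, hence convex on $[0,1]$. For the integral term, note that for each fixed $\theta$ the map $\alpha\mapsto\sr(\alpha,\theta)$ is a quadratic with leading coefficient
\[
1-2\sigma\cos\theta+\sigma^2=(1-\sigma)^2+2\sigma(1-\cos\theta)>0,
\]
positive because $\sigma<1$. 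Moreover $\sr(\alpha,\theta)=r_{\!\alpha}(\theta)^2\ge 0$. Writing $\sr'$ and $\sr''$ for derivatives in $\alpha$, we get
\[
\partial_\alpha^2\, \sr^m=m\,\sr^{m-1}\sr''+m(m-1)\,\sr^{m-2}(\sr')^2,
\]
which is $>0$ wherever $\sr>0$. Now $\sr$ vanishes only at $\theta=\pi$, $\alpha=1/(1+\sigma)$, a null set. So the integrand is strictly convex in $\alpha$ for almost every $\theta$, and integrating against the positive weight $\sin^2\theta/(1-2\sigma\cos\theta+\sigma^2)$ gives strict convexity of $\bm_m$ on $[0,1]$. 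Hence the minimizer $\alpha_\lip(m)$ exists and is unique.

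To locate the minimizer I will compute $\bm_m'$ at the two points $\alpha=1$ and $\alpha=\tfrac12$. We have
\[
\partial_\alpha \sr=-2(1-\alpha)+2(1-2\alpha)\sigma\cos\theta+2\alpha\sigma^2 .
\]
\emph{At $\alpha=1$.} Here $\sr=\lip$ and $\partial_\alpha \sr=2\sigma^2-2\sigma\cos\theta$. I will rewrite the latter as $(1-2\sigma\cos\theta+\sigma^2)-(1-\sigma^2)$. Then the needed integrals reduce to $\int_0^\pi\sin^2\theta\,d\theta=\pi/2$ and the Poisson-type identity
\[
\int_0^\pi \frac{\sin^2\theta}{1-2\sigma\cos\theta+\sigma^2}\,d\theta=\frac{\pi}{2}\quad(\sigma<1).
\]
This identity is exactly the normalization $\frac{8}{\pi}\int_0^1\frac{\sqrt{x(1-x)}}{(1-\sigma)^2+4\sigma x}\,dx=1$ already used in Corollary~\ref{prop7}, after the substitution $x=\sin^2(\theta/2)$. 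Together these give the derivative of the integral term as $m\lip^{m+1}$. The first term contributes $-m(1-\lip)^2\lip^{m-1}$. Therefore
\[
\bm_m'(1)=m\,\lip^{m-1}\bigl(\lip^2-(1-\lip)^2\bigr)=m\,\lip^{m-1}(2\lip-1).
\]
If $\lip\le\tfrac12$ this is $\le0$, and by convexity the minimum over $[0,1]$ is attained at $\alpha=1$. If $\lip>\tfrac12$ it is $>0$, so $\alpha_\lip(m)<1$.

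\emph{At $\alpha=\tfrac12$.} Here $\partial_\alpha\sr=\sigma^2-1<0$ for every $\theta$ and $\sr\ge0$, so the derivative of the integral term is $\le 0$. The derivative of the first term is $-m(1-\lip)^2\lipalpha^{m-1}<0$. Hence $\bm_m'(\tfrac12)<0$, and convexity forces $\alpha_\lip(m)>\tfrac12$. Combining the two endpoint computations gives $\alpha_\lip(m)\in(\tfrac12,1)$ when $\lip>\tfrac12$.

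The only genuinely delicate point is evaluating the $\theta$-integral in $\bm_m'(1)$ exactly, since the sign change at $\lip=\tfrac12$ depends on getting the constant precisely. The decomposition of $\partial_\alpha\sr$ into the denominator minus $(1-\sigma^2)$, combined with the normalization identity from Corollary~\ref{prop7}, is designed to produce that constant. Everything else is a routine sign check.
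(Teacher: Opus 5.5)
Your proposal is correct and follows essentially the same route as the paper. Strict convexity of the integral term comes from the positive-leading-coefficient quadratic $\sr(\alpha,\theta)$: you check it with an explicit second derivative, where the paper invokes composition with the increasing convex map $t\mapsto t^m$. The minimizer is then located from $\bm_m'(\tfrac12)<0$, using $\partial_\alpha\sr(\tfrac12,\theta)=\lip-1$, and from the exact value $\bm_m'(1)=m\,\lip^{m-1}(2\lip-1)$. Your splitting of $\partial_\alpha\sr(1,\theta)$ as the denominator minus $(1-\sigma^2)$ is a tidy variant. It needs only the identity $\int_0^\pi \sin^2\theta/(1-2\sigma\cos\theta+\sigma^2)\,d\theta=\pi/2$, rather than the paper's two classical integrals.
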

\begin{proof}
Clearly $\alpha\mapsto(1-\lip)\lipalpha^{\hspace{0.3ex}m}$ is convex on $[0,1]$. Also, since $t\mapsto t^m$ is convex and strictly increasing on $[0,\infty)$,  its composition with the nonnegative strictly convex quadratic $\sr(\alpha,\theta)$  is strictly convex. This  implies that the integral in \eqref{eq:Phi-def} is strictly convex in $\alpha$ as well, and therefore the same holds for  $\alpha\mapsto\bm_{m}(\alpha)$. This ensures the existence of a unique minimizer $\alpha_\lip(m)\in [0,1]$.

We claim that $\bm'_{m}(\frac12)<0$ so that $\alpha_\lip(m)>\frac12$,
whereas $\bm'_{m}(1)=m\,\lip^{m-1}(2\lip-1)$ and therefore $\alpha_\lip(m)<1$ if and only if $\lip>\frac12$. To establish these claims, we differentiate under the integral sign 
\begin{equation*}\label{eq:Phi-prime}
\bm'_{m}(\alpha)
=-m(1-\lip)^2 \lipalpha^{\hspace{0.2ex}m-1}
+\frac{2m\lip}{\pi}\!\!\int_0^\pi
\frac{\sr(\alpha,\theta)^{m-1}\partial_\alpha\sr(\alpha,\theta)\sin^2\theta}{1-2\sigma\cos\theta+\sigma^2}\, d \theta,
\end{equation*}
which is justified by dominated
convergence: the denominator in \eqref{eq:Phi-def} is bounded below by $(1-\sigma)^2$, while $\sr(\alpha,\theta)$ and its $\alpha$-derivative are
uniformly bounded on $[0, 1]\times [0, \pi]$.

For $\alpha=\frac12$ we have 
$\partial_\alpha\sr(\frac12,\theta)=\lip-1<0$, hence  both terms in this sum are negative and $\bm'_{m}(\frac12)<0$.
At $\alpha=1$ one has $\lipalpha=\lip$, $\sr(1,\theta)=\lip$, 
$\partial_\alpha\sr(1,\theta)=2(\lip-\sigma\cos\theta)$,
so that
\begin{align*}
\bm'_{m}(1)
&=-m(1-\lip)^2\lip^{m-1}
+\frac{4m\lip^{m}}{\pi}
\int_0^\pi\frac{(\lip-\sigma\cos\theta)\sin^2\theta}{1-2\sigma\cos\theta+\sigma^2}\, d \theta,
\end{align*}
and using the classical identities
\begin{equation}\label{eq:I0}
\int_0^\pi \frac{\sin^2\theta}{1-2\sigma\cos\theta+\sigma^2}\, d \theta=\frac{\pi}{2}\quad;\quad
\int_0^\pi \frac{\cos\theta\,\sin^2\theta}{1-2\sigma\cos\theta+\sigma^2}\, d \theta=\frac{\pi\sigma}{4}
\end{equation}
we obtain
$\bm'_{m}(1)=m\,\lip^{m-1}(2\lip-1)$ as announced.
\end{proof}

\subsubsection{Asymptotics of $\alpha_\lip(m)$}
Focusing on the range $\lip\in (\frac12,1)$, since $\bm_{m}(\alpha)$ is polynomial in $\alpha$ it follows that $\alpha_\lip(m)$ is the unique root in $(0,1)$ of the
polynomial $\bm'_{m}(\alpha)$.
Although this root can be approximated numerically, 
below we analyze two asymptotic regimes of independent interest.

The first asymptotic is when the number of iterations $m$ is fixed and $\lip\approx 1$.
At $\lip=1$ the first term in \eqref{eq:Phi-def} disappears and 
$\sr(\alpha,\theta)=1-2\alpha(1\!-\!\alpha)(1-\cos\theta)$, hence $\bm_{m}(\alpha)$
only depends on $\alpha(1\!-\!\alpha)$ and its minimum is at $\alpha=1/2$. For $\lip\uparrow 1$, the uniform convergence  of the polynomials $\bm_m(\alpha)$ on $[0,1]$, together with uniqueness of the limiting minimizer, implies that
$\alpha_\lip(m)\longrightarrow \frac12
$ as $\lip\uparrow 1$.

The second asymptotic is when $\lip$ is fixed and the number of iterations $m$ is large. This regime is more delicate and shows a different picture. Our numerical experiments suggested $\alpha_\lip(m)\approx 1-c/m$ for some constant $c$ that depends on $\lip$. This can be made precise using the function
\begin{equation}\label{eq:profile-Psi}
\Psi_\lip(c)
\triangleq (1-\lip)\,e^{c\,(1-\lip)/\lip}
+\frac{2\lip}{\pi}\int_0^\pi
\frac{e^{2c\,(\cos\theta/\sigma-1)}\sin^2\theta}{1-2\sigma\cos\theta+\sigma^2}
\, d \theta.
\end{equation}

\begin{proposition}
For $\lip\in(\tfrac12,1)$ the function $\Psi_\lip(\cdot)$ has a unique minimizer $c^*_\lip\in[0,\infty)$, and we have
\begin{align}
\label{eq:min-profile}
\bm_{m}(\alpha_\lip(m))&=\lip^m\Psi_\lip(c^*_\lip)+ O(\lip^m/m),\\
\alpha_\lip(m)&=1-{c^*_\lip}/{m}+O(1/m^{3/2}),\label{eq:cm-def}
\end{align}
with asymptotic constants that depend on the fixed contraction factor $\lip$.
\end{proposition}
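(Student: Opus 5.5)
The approach is to rescale $\alpha=1-c/m$ and show that $\lip^{-m}\bm_m(1-c/m)$ converges to $\Psi_\lip(c)$, with enough uniformity in $c$ to transfer minimizers and minimum values.

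\textbf{Step 1: expansion of the two terms.} With $\alpha=1-c/m$ we have $\lipalpha=\lip+(1-\lip)c/m$. Hence
\[
\lipalpha^m=\lip^m\bigl(1+(1-\lip)c/(\lip m)\bigr)^m=\lip^m e^{c(1-\lip)/\lip}\bigl(1+O(c^2/m)\bigr).
\]
Similarly,
\[
\sr(\alpha,\theta)=\lip+2(c/m)(\sigma\cos\theta-\lip)+O(c^2/m^2),
\]
so that
\[
\sr^m=\lip^m\exp\bigl(2c(\cos\theta/\sigma-1)\bigr)\bigl(1+O(c^2/m)\bigr),
\]
uniformly in $\theta\in[0,\pi]$. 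The denominator is bounded below by $(1-\sigma)^2>0$, so the integral converges uniformly in $\theta$. This gives $\bm_m(1-c/m)=\lip^m\bigl(\Psi_\lip(c)+O((1+c^2)e^{Kc}/m)\bigr)$ uniformly for $c$ in compacts.

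\textbf{Step 2: properties of $\Psi_\lip$.} Convexity of $\Psi_\lip$ is immediate, since it is a sum of exponentials in $c$ with positive weights. Strict convexity comes from the integral term. Coercivity also holds: the first term grows like $e^{c(1-\lip)/\lip}$, and this suffices.

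For the derivative at zero, differentiating gives
\[
\Psi_\lip'(0)=(1-\lip)^2/\lip+\frac{4\lip}{\pi}\int_0^\pi\frac{(\cos\theta/\sigma-1)\sin^2\theta}{1-2\sigma\cos\theta+\sigma^2}\,d\theta.
\]
By the identities \eqref{eq:I0}, this equals $(1-\lip)^2/\lip+2\lip(1/2-1/2)$? Computing carefully, the integral is $\pi/4-\pi/2=-\pi/4$, so
\[
\Psi_\lip'(0)=(1-\lip)^2/\lip-\lip=(1-2\lip)/\lip<0 \quad\text{for } \lip>\tfrac12.
\]
This is consistent with $\bm'_m(1)=m\lip^{m-1}(2\lip-1)$ together with the sign flip from $\alpha=1-c/m$. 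Hence $c^*_\lip\in(0,\infty)$ exists and is unique.

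\textbf{Step 3: localize the minimizer.} We need to show that $m(1-\alpha_\lip(m))$ stays bounded. Since $\bm_m$ is convex in $\alpha$, it suffices to check that $\bm_m'(1-C/m)>0$ for a large fixed $C$ and all large $m$. This follows from the derivative analogue of Step 1: $\lip^{-m}\,d/dc\,[\bm_m(1-c/m)]\to\Psi'_\lip(c)$ uniformly on compacts. For $C$ large we have $\Psi'_\lip(C)>0$.

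We must also exclude $\alpha$ far from 1, for example $\alpha\le 1-C/m$. Convexity handles this once we know the sign of the derivative at $1-C/m$. Then $c_m\triangleq m(1-\alpha_\lip(m))\in[0,C]$, and uniform convergence of the convex functions gives $c_m\to c^*_\lip$ and \eqref{eq:min-profile}. For the value estimate, $\min \bm_m=\lip^m(\Psi_\lip(c^*)+O(1/m))$: this comes from taking the uniform $O(1/m)$ error on $[0,C]$ and comparing the minima.

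\textbf{Step 4: rate for the minimizer (main obstacle).} The statement \eqref{eq:cm-def} requires $|c_m-c^*|=O(m^{-1/2})$. Set $f_m(c)=\lip^{-m}\bm_m(1-c/m)$. Since $\Psi_\lip''(c^*)>0$, quadratic growth gives
\[
\Psi(c_m)-\Psi(c^*)\le 2\sup_{[0,C]}|f_m-\Psi|=O(1/m),
\]
and hence $|c_m-c^*|^2\lesssim 1/m$. This yields the $m^{-3/2}$ error in $\alpha$. The delicate part is making the Step 1 error terms uniform, both for $c\in[0,C]$ and for the derivatives. I would handle this with explicit inequalities of the form $|\log(1+x)-x|\le x^2$ applied to $\sr/\lip-1=O(c/m)$.
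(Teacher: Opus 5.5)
Your proposal is correct and follows essentially the paper's proof. Both use the same rescaling $f_m(c)=\lip^{-m}\bm_m(1-c/m)$, a uniform $O(1/m)$ approximation of $\Psi_\lip$ on a compact interval, a comparison of minima, and strong convexity of $\Psi_\lip$ to get $|c_{\lip}(m)-c^*_\lip|=O(m^{-1/2})$.

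The only real variation is the localization step. The paper avoids derivatives: it bounds $f_m$ below by the strongly convex quadratic $q_\lip$ and uses $f_m(0)=\Psi_\lip(0)=1$ to trap all minimizers in an explicit interval $[0,\bar c]$. Note also a sign slip in your Step 3: in the $\alpha$ variable the condition you need is $\bm'_m(1-C/m)<0$, not $>0$. This is equivalent to the condition $f_m'(C)>0$ that you actually verify.
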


\begin{proof}Consider the strictly convex functions $f_m(c)\triangleq\lip^{-m}\,\bm_{m}(1-c/m)$ defined for $c\in[0,m]$. For $\alpha=1-c/m$ we have 
\begin{align}
\lip^{-m}(1-\lip)\,\lipalpha^{\hspace{0.2ex}m}&=(1-\lip)\left(1+\frac{c\,(1-\lip)/\lip}{m}\right)^m,\label{c1}\\
\lip^{-m}\sr(\alpha,\theta)^m&
=\left(1+\frac{2c\,(\cos\theta/\sigma-1)}{m}+\frac{c^2(1-2\sigma\cos\theta+\sigma^2)/\lip}{m^2}\right)^m,\label{c2}
\end{align}
from which it follows that $f_m(c)$ converges pointwise towards $\Psi_\lip(c)$ for all $c\geq 0$.
Notice that $\Psi_\lip(\cdot)$ is strictly convex since this holds for 
 $c\mapsto (1-\lip)e^{c(1-\lip)/\lip}$ while the integral term is convex.

Ignoring the integral term in $f_{m}(\cdot)$ (which is positive) and considering only \eqref{c1}, the inequality  $(1+\frac{x}{m})^m\geq q(x)\triangleq 1+x+\frac{1}{4}x^2$ for $x\geq 0$
 and $m\geq 2$, implies that the functions $f_m(\cdot)$ for $m\geq 2$, as well as the limit $\Psi_\lip(\cdot)$, are uniformly bounded from below by the strongly convex quadratic $q_\lip(c)\triangleq (1-\lip)\,q(c(1-\lip)/\lip)$.
On the other hand, for $\alpha=1$ we have $\lipalpha=\lip$ and $\sr(1,\theta)=\lip$, so that
$$
f_m(0)=\lip^{-m}\,\bm_{m}(1)=
(1-\lip)
+ \frac{2\lip}{\pi}\int_0^\pi
\frac{\sin^2\theta}{1-2\sigma\cos\theta+\sigma^2}\, d \theta = 1,
$$
and therefore $\Psi_\lip(0)=1$ as well. It follows that the unique minimizers $c^*_{\lip}$ of $\Psi_\lip(\cdot)$ and $c_{\lip}(m)$ of $f_m(\cdot)$ for $m\geq 2$, are all located in the interval $[0,\bar c]$ with $\bar c=\frac{2\lip}{1-\lip}(\frac{1}{\sqrt{1-\lip}}-1)$ the positive root of $q_\lip(\bar c)=1$. 
 
As a matter of fact, differentiating \eqref{eq:profile-Psi} and using the identities \eqref{eq:I0} we get $\Psi_{\lip}'(0)=(1-2\lip)/\lip<0$ and therefore $c^*_{\lip}>0$. Also, the minimizer of $f_m(\cdot)$ is precisely $c_{\lip}(m)=m(1-\alpha_\lip(m))$ which is strictly positive by Proposition \ref{prop:strict-convex}, and we have  $\bm_{m}(\alpha_\lip(m))=\lip^m f_m(c_{\lip}(m))$.

The formulas \eqref{c1}-\eqref{c2} involve expressions of the 
form $(1+\frac{x}{m})^m$ with $x$ confined to a bounded interval $[-a,a]$ (that depends on $\lip$). Since $|(1+\frac{x}{m})^m-e^x|\leq a^2 e^{2a}/m$ for all $m\geq 2a$ and $x\in[-a,a]$, we can find a constant $M$ such that 
$|f_m(c)-\Psi_\lip(c)|\leq M/m$ for all $m\geq M$ and $c\in[0,\bar c]$.
Evaluating at $c_{\lip}(m)$ and $c^*_\lip$, and  using their  optimality, we get
\begin{align*}
   f_m(c_{\lip}(m))&\le f_m(c^*_\lip) \le \Psi_\lip(c^*_\lip)+\mbox{$\frac{M}{m}$},\\
    \Psi_\lip(c^*_\lip)&\le \Psi_\lip(c_{\lip}(m))\le f_m(c_{\lip}(m))+\mbox{$\frac{M}{m}$}.
\end{align*}
It follows that $|f_m(c_{\lip}(m))-\Psi_\lip(c^*_\lip)|\le M/m$, which implies \eqref{eq:min-profile}. Also, 
$0\leq\Psi_\lip(c_{\lip}(m))-\Psi_\lip(c^*_\lip)\leq\frac{2M}{m}$ and, since $\Psi_\lip(\cdot)$ is strongly convex on $[0,\bar c]$, we obtain $|c_{\lip}(m)-c^*_\lip|=O(1/\sqrt{m})$ which yields \eqref{eq:cm-def}.
\end{proof}

\begin{proposition}\label{p12}
    As $\lip\uparrow 1$ we have $c_{\lip}^*(1-\lip)/\lip\to \frac12$, so that $c_{\lip}^*\sim \frac{\lip}{2(1-\lip)}$.
\end{proposition}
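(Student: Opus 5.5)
Set $\varepsilon\triangleq 1-\lip$ and rescale the variable of $\Psi_\lip$ by $c=u\lip/\varepsilon$, so that $u=c(1-\lip)/\lip$ is the quantity in the statement. Define $g_\lip(u)\triangleq \varepsilon^{-1/2}\,\Psi_\lip(u\lip/\varepsilon)$ for $u>0$. This is a positive multiple of $\Psi_\lip$ composed with a linear change of variable, so it is strictly convex, and its unique minimizer is $u^*_\lip=c^*_\lip(1-\lip)/\lip$. The plan is to show that $g_\lip(u)\to g(u)\triangleq e^u/\sqrt{\pi u}$ locally uniformly on $(0,\infty)$ as $\lip\uparrow1$. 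Since $g'(u)/g(u)=1-\frac{1}{2u}$, the limit $g$ is strictly convex with unique minimizer $u=\frac12$.

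\emph{Step 1: the first term.} It equals $\varepsilon^{-1/2}(1-\lip)e^{u}=\varepsilon^{1/2}e^u$, which tends to $0$ uniformly on compacts.

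\emph{Step 2: the integral term.} Write the exponent as
\[
\tfrac{2c}{\sigma}(\cos\theta-\sigma)=\tfrac{2c(1-\sigma)}{\sigma}-\tfrac{4c}{\sigma}\sin^2(\theta/2).
\]
With $c=u\lip/\varepsilon$ and $(1-\sigma)/\varepsilon=1/(1+\sigma)$, the first part equals $2u\sigma/(1+\sigma)\to u$. Next apply the substitution $x=\sin^2(\theta/2)$ used in the proof of Theorem~\ref{Prop6}. The integral term of $\Psi_\lip$ becomes
\[
e^{2u\sigma/(1+\sigma)}\,\frac{8\lip}{\pi}\int_0^1\frac{\sqrt{x(1-x)}\,e^{-4ux\sigma/\varepsilon}}{(1-\sigma)^2+4\sigma x}\,dx .
\]
Now scale $x=\varepsilon y$. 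The denominator is $\varepsilon\,(\varepsilon/(1+\sigma)^2+4\sigma y)$ and the numerator is $\varepsilon\sqrt{y}\sqrt{1-\varepsilon y}$, with $dx=\varepsilon\,dy$. Hence the integral equals
\[
\varepsilon^{1/2}\int_0^{1/\varepsilon}\frac{\sqrt{y}\sqrt{1-\varepsilon y}\,e^{-4u\sigma y}}{\varepsilon/(1+\sigma)^2+4\sigma y}\,dy .
\]
For $u$ in a compact subset of $(0,\infty)$, the integrand is dominated by $e^{-2uy}/(2\sqrt y)$ (using $\sigma\ge\frac12$), which is integrable. Dominated convergence, uniform in $u$ on compacts, then gives the limit $\int_0^\infty \frac{e^{-4uy}}{4\sqrt y}\,dy=\frac{\sqrt\pi}{8\sqrt u}$. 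Multiplying by $\frac{8}{\pi}e^{u}$ yields $g(u)=e^u/\sqrt{\pi u}$, as claimed.

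\emph{Step 3: localization of the minimizer.} Fix $0<a<\frac12<b$, so that $g(\frac12)<\min\{g(a),g(b)\}$. By uniform convergence on $[a,b]$, for $\lip$ close to $1$ we have $g_\lip(\frac12)<\min\{g_\lip(a),g_\lip(b)\}$. Convexity of $g_\lip$ then forces its minimizer $u^*_\lip$ into $(a,b)$, with no need to control tails near $0$ or $\infty$. Applying the same argument on arbitrarily small neighborhoods of $\frac12$ gives $u^*_\lip\to\frac12$. This is exactly $c^*_\lip(1-\lip)/\lip\to\frac12$, and hence $c^*_\lip\sim\frac{\lip}{2(1-\lip)}$.

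The main obstacle is Step 2, specifically obtaining a domination that is uniform in both $\lip$ and $u$ near the singular endpoint $y\to0$. The bound $\sqrt y/(4\sigma y)\le 1/(2\sqrt y)$ handles this, and uniformity in $u$ only requires $u\ge a>0$. Uniform rather than pointwise convergence also follows from this, because the integrand is continuous and monotone in $u$ (alternatively, pointwise convergence of convex functions on an open interval is automatically locally uniform).
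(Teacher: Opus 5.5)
Your proposal is correct and follows the paper's proof step by step. Both use the rescaling $(1-\lip)^{-1/2}\Psi_\lip(y\lip/(1-\lip))$, the substitutions $x=\sin^2(\theta/2)$ and $x=(1-\lip)y$, dominated convergence to $e^y/\sqrt{\pi y}$, and the convexity trapping argument with $a<\frac12<b$; there, pointwise convergence at those three points already suffices, so your uniformity discussion is optional. One harmless slip: after $x=\varepsilon y$ the numerator is $\varepsilon^{1/2}\sqrt{y}\sqrt{1-\varepsilon y}$, not $\varepsilon\sqrt{y}\sqrt{1-\varepsilon y}$, although the prefactor $\varepsilon^{1/2}$ you end up with is right.
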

\begin{proof}
Denoting $y^*_{\lip}\triangleq c^*_{\lip}(1-\lip)/\lip$ we must show that $y^*_{\lip}\to\frac12$ as $\lip\uparrow 1$. 
To this end we observe that $y^*_{\lip}$
is the unique minimizer of  the rescaled function
$\Phi_\lip(y)\triangleq \frac{1}{\sqrt{1-\lip}}\Psi_\lip(y\,\lip/(1-\lip))$.

Recalling that  $\sigma=\sqrt{\lip}$ and using the change of variable $x=\sin^2(\theta/2)$ in \eqref{eq:profile-Psi}, we can write
    $$\Phi_\lip(y)=\sqrt{1-\lip}\,e^{y}
+\frac{8\lip}{\pi\sqrt{1-\lip}}e^{2y\sigma/(1+\sigma)}\int_0^1
\frac{e^{-4y\sigma\, x/(1-\lip)}\sqrt{x(1-x)}}{(1-\sigma)^2+4\sigma x}
\, d x,$$
and then a second change of variable $u=x/(1-\lip)$ gives
$$\Phi_\lip(y)=\sqrt{1-\lip}\,e^y+\frac{8\lip}{\pi}e^{2y\sigma/(1+\sigma)}\int_0^{1/(1-\lip)}
\frac{e^{-4y\sigma\,u}\sqrt{ u(1-(1-\lip)u)}}{(1-\sigma)/(1+\sigma)+4\sigma  u}
\, d u.$$
For $\lip\geq\frac12$ we have $\sigma \ge 1/\sqrt{2}$ so the integrand is bounded by $\exp(-2 y u)/\sqrt{2u}$. For $y>0$ the latter is integrable on $(0,\infty)$, so by dominated convergence we get $\Phi_\lip(y)\to \Phi_1(y)\triangleq e^y/\sqrt{\pi y}$ as $\lip\uparrow 1$.

The function $\Phi_1(\cdot)$ is strictly convex and diverges to  infinity at 0 and $\infty$, with its minimum attained at $y=\frac12$. Thus,
for $0<a<\frac12<b$ we have $\Phi_1(a)>\Phi_1(\frac12)$ and $\Phi_1(b)>\Phi_1(\frac12)$, and by pointwise convergence we also have $\Phi_\lip(a)>\Phi_\lip(\frac12)$ and $\Phi_\lip(b)>\Phi_\lip(\frac12)$ for all $\lip$ close to 1. Hence, since $\Phi_\lip(\cdot)$ is convex its minimizer $y^*_{\lip}$ is trapped in $[a,b]$, and we conclude  by letting $a\uparrow\frac12$ and $b\downarrow\frac12$.
\end{proof}

\section{Inexact Krasnosel'skii--Mann  iterations}\label{sec:6}
In this final section we analyze the case of inexact Krasnosel'skii--Mann iterations in which  the operator values can only be computed up to an additive error $e_m$, that is 
\begin{equation*} \tag{$\textsc{ikm}$} \label{eq:ikm_a}
 (\forall m\in\N)\qquad   x_{m+1}=(1-\alpha)\,x_{m}+\alpha\,(Tx_m+e_m).
\end{equation*}
We implicitly assume that $y_m\triangleq Tx_m+e_m\in C$, so  the iterates remain in the domain $C$. 
The nonexpansive case was studied in \citet{BravoCominettiPavezSigne2019}, so throughout this section we assume  $0<\lip<1$.

\vspace{1ex}
\noindent{\sc Remark.} To bypass the restrictive assumption  $y_m\in C$,   in Hilbert spaces with $C$  closed and convex, one can consider the projected iteration $x_{m+1}=(1-\alpha)\,x_m+\alpha\,\tilde y_m$ with $\tilde y_m=\Pi_C(y_m)$. This can be written as
$x_{m+1}=(1-\alpha)\,x_m+\alpha\,(Tx_m+\tilde e_m)$ with $\tilde e_m\triangleq\tilde y_m-Tx_m$ so that $Tx_m+\tilde e_m=\tilde y_m\in C$. Moreover, since $Tx_m\in C$ and $\Pi_C(\cdot)$ is nonexpansive, we have 
$\|\tilde e_m\|\le\|e_m\|$. Beyond the Hilbert setting, one may consider an approximate projection $\tilde y_m\in C$  with 
$\|\tilde y_m-y_m\|\leq \mathop{\rm dist}(y_m,C)+\tilde\varepsilon_m$. Since $\mathop{\rm dist}(y_m,C)\leq \|y_m-Tx_m\|=\|e_m\|$, a triangle inequality then gives 
$\|\tilde e_m\|\leq 2\|e_m\|+\tilde\varepsilon_m$.  Slight variations of these ideas were previously developed in \citet{BravoCominettiPavezSigne2019}.

\begin{proposition}
Let  $\varepsilon_m\geq 0$ such that $\|e_m\|\leq \kappa\,\varepsilon_m$. Set $\rew_i=\varepsilon_i/\lip$ and define inductively
$$\tilde \cm_{m,n}=\mbox{$\pi_0^m-\pi_0^n + \sum_{i=1}^m\sum_{j=m+1}^n\lip\,\pi_i^m \pi_j^n(\tilde \cm_{i-1,j-1}+\rew_{i-1}+\rew_{j-1})$.}$$
Then, for all $0\leq m<n$ we have $\|x_m-x_n\|\leq\kappa\,\tilde \cm_{m,n}$.
\end{proposition}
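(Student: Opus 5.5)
The proof will mirror the derivation of the recursion \eqref{eq:recursive} in \S\ref{sec:recursive}, now applied to the inexact iterates. Set $y_m=Tx_m+e_m\in C$ and adopt the convention $y_{-1}=x_0$. Unrolling \eqref{eq:ikm_a} gives the convex-combination representation
$$x_m=\sum_{i=0}^m\pi_i^m\,y_{i-1},$$
with the same Krasnosel'skii--Mann weights $\pi_0^m=(1-\alpha)^m$ and $\pi_i^m=\alpha(1-\alpha)^{m-i}$. I will argue by strong induction on $n$, assuming $\|x_i-x_j\|\le\kappa\,\tilde\cm_{i,j}$ for all $0\le i<j<n$. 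The base case is $\tilde\cm_{n,n}=0$, together with the symmetric convention used for pairs of equal indices.

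For $m<n$, the identity $\pi_i^m-\pi_i^n=\pi_i^m\sum_{j=m+1}^n\pi_j^n$ (for $0\le i\le m$) is used exactly as before to write
$$x_m-x_n=\sum_{i=0}^m\sum_{j=m+1}^n\pi_i^m\pi_j^n\,(y_{i-1}-y_{j-1}).$$
The terms with $i=0$ involve $x_0-y_{j-1}$. Since $x_0$ and $y_{j-1}$ both lie in $C$, each such term is bounded by $\kappa$. Their total weight is $\pi_0^m\sum_{j>m}\pi_j^n=\pi_0^m-\pi_0^n$, which produces the first term of $\tilde\cm_{m,n}$.

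For $i\ge1$ (so $j\ge2$), the triangle inequality and the Lipschitz property of $T$ give
$$\|y_{i-1}-y_{j-1}\|\le\lip\,\|x_{i-1}-x_{j-1}\|+\|e_{i-1}\|+\|e_{j-1}\|.$$
The induction hypothesis applies to the pair $(i-1,j-1)$, since $i-1<j-1<n$. Combined with $\|e_k\|\le\kappa\varepsilon_k=\kappa\lip\rew_k$, this yields
$$\|y_{i-1}-y_{j-1}\|\le\kappa\lip\bigl(\tilde\cm_{i-1,j-1}+\rew_{i-1}+\rew_{j-1}\bigr).$$
Summing these bounds with the weights $\pi_i^m\pi_j^n$ reproduces exactly the defining recursion for $\tilde\cm_{m,n}$, which closes the induction.

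The only delicate point is the $i=0$ term. There I use the coarse diameter bound $\|x_0-y_{j-1}\|\le\kappa$, which relies on the standing assumption $y_m\in C$; this is why no error term $\rew$ appears in $\pi_0^m-\pi_0^n$. Everything else is routine bookkeeping identical to the exact case.
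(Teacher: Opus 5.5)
Your proof is correct and follows the paper's argument essentially step for step. Both use the representation $x_m=\sum_{i=0}^m\pi_i^m y_{i-1}$ with $y_{-1}=x_0$, the diameter bound $\|x_0-y_{j-1}\|\le\kappa$ for the $i=0$ terms (valid because $y_{j-1}\in C$), and for $i\ge1$ the estimate $\|y_{i-1}-y_{j-1}\|\le\ell\,\|x_{i-1}-x_{j-1}\|+\|e_{i-1}\|+\|e_{j-1}\|$ combined with strong induction on $n$. Your remark about $\tilde c_{n,n}=0$ and an equal-index convention is harmless but unnecessary: the recursion only calls pairs with $i-1<j-1$, and the induction hypothesis is vacuous at $n=1$.
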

\begin{proof}
Denoting $y_{-1}=x_0$ we have $x_m=\sum_{i=0}^m\pi_i^m\,y_{i-1}$ for all $m\in\N$. The proof follows inductively. Suppose that $\|x_i-x_j\|\leq\kappa\,\tilde \cm_{i,j}$ for all $0\leq i<j\le n-1$. Then, for 
$0\leq m<n$ we get
 \begin{align*}
    \|x_m-x_n\|&=\|\mbox{$\sum_{i=0}^m\pi_i^m\,y_{i-1}-\sum_{j=0}^n\pi_j^n\,y_{j-1}$}\|\\
     &=\|\mbox{$\sum_{i=0}^m\sum_{j=m+1}^n\pi_i^m \pi_j^n(y_{i-1}-y_{j-1})$}\|\\
&\leq\kappa\,\mbox{$\pi_0^m \sum_{j=m+1}^n\pi_j^n + \kappa \sum_{i=1}^m\sum_{j=m+1}^n\pi_i^m \pi_j^n\,(\lip\tilde \cm_{i-1,j-1}+\varepsilon_{i-1}+\varepsilon_{j-1})$}\\
&=\kappa\left(\mbox{$\pi_0^m -\pi_0^n + \sum_{i=1}^m\sum_{j=m+1}^n\lip\,\pi_i^m \pi_j^n(\tilde \cm_{i-1,j-1}+\rew_{i-1}+\rew_{j-1})$}\right)\\
&=\kappa\,\tilde \cm_{m,n}.
\end{align*}

\vspace{-4ex}
\end{proof}
The bounds $\tilde \cm_{m,n}$ can be interpreted as the total expected reward
for the Markov chain in \S{}\ref{sec:2}, where now at each transient state $(i,j)$ with $0\leq i< j$   the horizontal player $H$ collects a reward $\rew_i$ and the vertical player $V$ gets $\rew_j$,  with  an additional  reward of 1 at every $H$-winning position $(-1,j)$.
Notice that, starting from $(m,n)$ with $m<n$, the chain can only visit transient 
states $(i,j)$ such that $0\leq i<m$ and $i< j< n$. Adapting the arguments in Sections \ref{sec:stratification} and \ref{sec:latticepaths}, and noting that the final jump  to $(i,j)$ involves an additional factor $\alpha\lip$ that 
is absent in a jump towards $(-1,j)$, the probability of visiting $(i,j)$ when starting from $(m,n)$ is (for alternative formulas see Appendix \ref{App:Alt_pmnij})
\begin{align}
p_{m,n}^{i,j}&=\sum_{k\geq 0}L_{m,n}^{k,i,j}\;(\lip\,\alpha^2)^{k+1}\,(1\!-\!\alpha)^{n+m-i-j-2k-2},\label{pmnij1}\\
L_{m,n}^{k,i,j}&=\binom{n\!-\!1\!-\!j}{k} \!\!\binom{m\!-\!1\!-\!i}{k}-\binom{n\!-\!1\!-\!i}{k}\!\! \binom{m\!-\!1\!-\!j}{k}.\label{pmnij2}
\end{align}
The sum in \eqref{pmnij1} can be restricted to $k<\min\{n-j,m-i\}$ since for larger $k$ one has 
$L_{m,n}^{k,i,j}=0$. This also avoids negative powers of $(1-\alpha)$ that are undefined when $\alpha=1$.

The expected reward $\tilde \cm_{m,n}$ starting from $(m,n)$ decomposes as $\tilde \cm_{m,n}=c^W_{m,n}+c^H_{m,n}+c^V_{m,n}$ where
$c^W_{m,n}$ is the terminal reward at states $(-1,j)$, and $c^H_{m,n}, c^V_{m,n}$  the expected transient rewards, that is
\begin{align*}
c^W_{m,n}&= \mathbb P \left ( \text{\normalfont $H$ wins $|\,s_0=(m,n)$} \right )=\cm_{m,n},\\
c^H_{m,n}&=\mbox{$\sum_{i=0}^{m-1}\PP(H_{m,n}^i)\,\rew_i$},\\
c^V_{m,n}&=\mbox{$\sum_{j=1}^{n-1}\PP(V_{m,n}^j)\,\rew_j$},
\end{align*}
with $\PP(H_{m,n}^i)=\sum_{j=i+1}^{n-1}p_{m,n}^{i,j}$ 
the probability of visiting column $i$,
and $\PP(V_{m,n}^j)=\sum_{i=0}^{j-1}p_{m,n}^{i,j}$  
the probability of visiting row $j$. Notice that the sum 
$c^V_{m,n}$ starts from $j=1$ because $\PP(V_{m,n}^0)=0$. 

Building on the results for absorption probabilities in Section \ref{sec:3}, we obtain the following characterization for the probability of $H^i_{m,n}$.
\begin{lemma}\label{L4}
   For all $(m,n)\in \D$ and $0\le i< m$ we have $\PP(H_{m,n}^i)= \alpha\lip\, \cm_{m-1-i,n-1-i}$.
\end{lemma}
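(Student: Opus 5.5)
The idea is to avoid summing $p_{m,n}^{i,j}$ over $j$ directly. Instead I will identify the event $H^i_{m,n}$ with an $H$-winning event for a shifted chain, and then invoke Lemma \ref{lem:interp2}. The chain is translation invariant up to the boundary: the transition probabilities $P((m,n);(i,j))=\lip\,\pi^m_{i+1}\pi^n_{j+1}$ for $0\le i<m\le j<n$ depend only on the differences $m-1-i$ and $n-1-j$, since $\pi^m_{i+1}=\alpha(1-\alpha)^{m-1-i}$. The boundary enters only through the special stopping rule at $i=-1$, where $\pi_0^m=(1-\alpha)^m$ carries no factor $\alpha$.

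First I describe the event of visiting column $i$. The chain visits column $i$ exactly when, during some phase, player $H$ (after $V$ has landed weakly above the diagonal) stops at position $i$ and the subsequent $\Ber(\lip)$ draw is $1$. I compare this with the chain started from $(m',n')\triangleq(m-1-i,\,n-1-i)$, i.e.\ the picture translated by $i+1$ units left and down. The two chains are coupled step by step through the same Bernoulli sequences.
\begin{itemize}
\item A phase of the original chain from $(a,b)$ with $a>i$ corresponds to a phase of the shifted chain from $(a-1-i,b-1-i)$.
\item $V$ landing below the diagonal is preserved under the shift. The shifted $V$ reaching $-1$ corresponds to the original $V$ reaching $i$, which lies below the diagonal as long as $a>i$, so both chains end in a $V$-win.
\item The original $H$ stopping at position $i$ (a failure, probability factor $\alpha$) corresponds to the shifted $H$ reaching $-1$, i.e.\ an $H$-win in the shifted chain.
\end{itemize}
This correspondence must be stated carefully when $V$ lands at heights $j\ge a$, to confirm that the diagonal condition $j\ge a$ is translation invariant.

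The key point is the mismatch at the end. In the shifted chain, reaching $-1$ happens with the product of continuation probabilities $(1-\alpha)^{a'}$ with $a'=a-1-i$, and the final step is forced. In the original chain, stopping at $i$ requires those same $a-1-i$ continuation steps, then one additional failure (factor $\alpha$), and then the unsettled draw $B=1$ (factor $\lip$) needed to actually visit $(i,j)$. Before that final phase, all phases have identical weights in both chains, since they only involve $\pi^{\cdot}_{\cdot+1}$ factors and $\lip$. Summing over paths then gives $\PP(H^i_{m,n})=\alpha\lip\,\PP(H\text{ wins}\mid s_0=(m',n'))$, and Lemma \ref{lem:interp2} turns this into $\alpha\lip\,\cm_{m-1-i,n-1-i}$. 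One also checks that column $i$ can be visited at most once, since each unsettled phase moves $H$ strictly left, so the probability of visiting equals a single sum over first-visit paths.

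As a cross-check, I would also verify the identity combinatorially. Summing \eqref{pmnij1}--\eqref{pmnij2} over $j$ from $i+1$ to $n-1$, and substituting $j'=j-1-i$, should reproduce \eqref{eq:cmn} for $(m',n')$ with the extra factor $\alpha\lip$. The binomials match: $\binom{n-1-j}{k}=\binom{n'-1-j'}{k}$, $\binom{m-1-i}{k}=\binom{m'}{k}$, $\binom{n-1-i}{k}=\binom{n'}{k}$, and $\binom{m-1-j}{k}=\binom{m'-1-j'}{k}$. The powers also match: $(\lip\alpha^2)^{k+1}(1-\alpha)^{n+m-i-j-2k-2}=\alpha\lip\cdot\alpha(\lip\alpha^2)^k(1-\alpha)^{n'+m'-1-j'-2k}$. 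This gives an essentially mechanical second proof.

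The main obstacle is justifying \eqref{pmnij1} itself, which the paper only sketches. For that reason I would present the coupling argument as primary and the index substitution as confirmation.
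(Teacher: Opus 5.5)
Your proposal is correct and follows the paper's own argument. The paper likewise identifies $H^i_{m,n}$ with the $H$-winning event of the chain shifted by $i+1$ units and started at $(m-1-i,n-1-i)$, notes that the only discrepancy is the extra factor $\alpha\lip$ on the final jump into column $i$, and then invokes Lemma~\ref{lem:interp2}.

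Your coupling spells out details the paper leaves implicit: the transition weights are translation invariant away from the boundary, the diagonal test is shift invariant, and $V$-absorption at heights $\le i$ corresponds to shifted absorption at $-1$. Your index-substitution check against \eqref{pmnij1}--\eqref{pmnij2} is valid, but it is not an independent proof, since the paper derives \eqref{pmnij1} by the same shift argument.
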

\begin{proof}
The event $H^i_{m,n}$ is the same as the $H$-winning event in Lemma \ref{lem:interp2} with column $i$ instead of $-1$, the only difference being the extra factor $\alpha\lip$  in the final  jump to $(i,j)$. Hence, a shift in the indices establishes the stated formula for  $\PP(H_{m,n}^i)$.
\end{proof}

Appendices \ref{App:ExpectReward} and \ref{App:Alt_pmnij} provide formulas to estimate $\PP(V_{m,n}^j)$. Specifically, Proposition \ref{P11} shows that in the contiguous case $n=m+1$ we have $\PP(V_{m,m+1}^j)\leq\PP(H_{m,m+1}^j)$ for all $0\le j\leq m-1$.
 Exploiting these facts we derive the following error bound for the residuals of \eqref{eq:ikm_a}.

\begin{theorem}\label{Thm12} Assume $0<\lip<1$ and suppose that $Tx_m+e_m\in C$ so  
that the sequence  $(x_m)_{m\in\N}$ generated by \emph{\eqref{eq:ikm_a}} is well defined and remains in the domain $C$. Suppose that $\|e_m\|\leq \kappa\,\varepsilon_m$ and let $\bm_m$ be as in \eqref{eq:cmmp1_Chebyshev55} (or equivalently \eqref{eq:cmmp1_Chebyshev5}). Then, for all $m\in\N$ we have 
\begin{align}\label{eq:conv}
    \|x_m-Tx_m\|
    &\leq\kappa\,\big(\mbox{$\bm_{m}+ 2\alpha\sum_{i=0}^{m-1}\!\bm_{m-1-i}\,\varepsilon_{i}+2\varepsilon_m$}\big).
\end{align}
\end{theorem}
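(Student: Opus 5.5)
We need to bound $\|x_m-Tx_m\|$. Since $x_{m+1}-x_m=\alpha(Tx_m+e_m-x_m)$, we have $\|x_m-Tx_m\|\le \|x_m-x_{m+1}\|/\alpha+\|e_m\|$. The preceding proposition gives $\|x_m-x_{m+1}\|\le \kappa\,\tilde \cm_{m,m+1}$. With the reward decomposition $\tilde \cm_{m,m+1}=c^W_{m,m+1}+c^H_{m,m+1}+c^V_{m,m+1}$, this yields
\begin{equation*}
\|x_m-Tx_m\|\le \kappa\Big(\tfrac{1}{\alpha}\big(\cm_{m,m+1}+c^H_{m,m+1}+c^V_{m,m+1}\big)+\varepsilon_m\Big).
\end{equation*}
The first term is exactly $\bm_m$ by definition. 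It then remains to bound the two transient reward terms by $2\alpha\sum_{i<m}\bm_{m-1-i}\varepsilon_i$ plus a contribution at index $m$, which will be absorbed into the $2\varepsilon_m$ term. The form $2\varepsilon_m$ means only one extra $\varepsilon_m$ may arise from these terms.

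For the horizontal rewards, Lemma \ref{L4} with $n=m+1$ gives $\PP(H^i_{m,m+1})=\alpha\lip\,\cm_{m-1-i,m-i}=\alpha^2\lip\,\bm_{m-1-i}$. Since $\rew_i=\varepsilon_i/\lip$, we get
\begin{equation*}
\tfrac1\alpha c^H_{m,m+1}=\alpha\sum_{i=0}^{m-1}\bm_{m-1-i}\varepsilon_i.
\end{equation*}

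For the vertical rewards, $c^V_{m,m+1}=\sum_{j=1}^{m}\PP(V^j_{m,m+1})\rew_j$, where rows run up to $j=n-1=m$. For $1\le j\le m-1$, Proposition \ref{P11} gives $\PP(V^j_{m,m+1})\le \PP(H^j_{m,m+1})$, so these terms contribute at most another $\alpha\sum_{i\le m-1}\bm_{m-1-i}\varepsilon_i$. This produces the factor $2\alpha$. The exceptional row $j=m$ must be treated separately. A transient state $(i,m)$ with $i<m$ is reached only if the first phase has $V$ stop immediately at height $m$. For that first jump to $(i,m)$, $\sum_i p^{i,m}_{m,m+1}=\lip\sum_{i=0}^{m-1}\pi^m_{i+1}\pi^{m+1}_{m+1}=\lip\,\alpha(1-\pi_0^m)\le\alpha\lip$; this is the only route to row $m$, since phases strictly decrease $j$. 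Hence $\PP(V^m_{m,m+1})\rew_m\le\alpha\varepsilon_m$, and dividing by $\alpha$ gives at most $\varepsilon_m$. Adding this to the $\varepsilon_m$ from the error term yields $2\varepsilon_m$, which gives \eqref{eq:conv}.

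The main obstacle is the vertical reward comparison, Proposition \ref{P11}, which we take as given. The remaining care is the index bookkeeping: the horizontal sum runs over $0\le i\le m-1$, while the vertical sum runs over $1\le j\le m$. Only the top row escapes the comparison, and it is handled by the direct first-jump estimate above.
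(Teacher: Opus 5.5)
Your proof is correct and follows the paper's argument essentially step for step. The shared steps are the triangle inequality $\|x_m-Tx_m\|\le\|x_{m+1}-x_m\|/\alpha+\|e_m\|$, the reward decomposition of $\tilde c_{m,m+1}$, Lemma~\ref{L4} for the horizontal rewards, Proposition~\ref{P11} for rows $j\le m-1$, and the separate top-row bound $\alpha\lip(1-(1-\alpha)^m)\le\alpha\lip$. You derive that top-row bound explicitly from the first jump, whereas the paper just states it; otherwise the only difference is that the paper handles $m=0$ separately, while your argument covers it because all the transient sums are empty there.
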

\begin{proof} For $m=0$ this holds trivially since $\|x_0-Tx_0\|\leq\kappa$ and $\bm_0=1$, so let us assume $m\geq 1$. Using \eqref{eq:ikm_a} and the error estimate $\|e_m\|\leq\kappa\,\varepsilon_m$, a triangle inequality gives
\begin{align}\label{ikmbound}
    \|x_m-Tx_m\|&\leq\|x_{m+1}-x_m\|/\alpha+\|e_m\|\leq\kappa\,(\tilde \cm_{m,m+1}/\alpha+\varepsilon_m),
\end{align}
with $\tilde \cm_{m,m+1}$  the total expected reward starting from $(m,m+1)$, which is given by 
\begin{align*}
\tilde \cm_{m,m+1}&=\mbox{$\cm_{m,m+1}+\sum_{i=0}^{m-1}\PP(H_{m,m+1}^{i})\,\rew_i+\sum_{j=1}^{m}\PP(V_{m,m+1}^{j})\,\rew_j$}
\end{align*}
where $\rew_i=\varepsilon_i/\lip$.
Using 
$\cm_{k,k+1}=\alpha\,\bm_{k}$ and Lemma \ref{L4}, we get $\PP(V^j_{m,m+1})\leq\PP(H_{m,m+1}^{j})=\alpha^2\lip \bm_{m-1-j}$ for all $0\le j<m$. This, together with $\PP(V_{m,m+1}^{m})=\alpha\lip(1-(1-\alpha)^m)\leq\alpha\lip$, implies
\begin{align*}
\tilde \cm_{m,m+1}
&\le\mbox{$\alpha\,\bm_{m}+\alpha^2\bm_{m-1}\,\varepsilon_0+ \sum_{i=1}^{m-1}2\alpha^2\,\bm_{m-1-i}\,\varepsilon_i+\alpha\,\varepsilon_m$}.
\end{align*}
The term $\alpha^2\,\bm_{m-1}\,\varepsilon_{0}$ can be included as term $i=0$ in the convolution sum (multiplied by 2), and then plugging this into \eqref{ikmbound} yields \eqref{eq:conv}.
\end{proof}

\noindent{\sc Remark.} 
The estimate \eqref{eq:conv} can be  sharpened by retaining the $i=0$ term  as $\alpha \bm_{m-1}\varepsilon_0$. For simplicity, we absorbed this term in the sum  multiplied by 2, which gives a slightly weaker bound.

\vspace{2ex}
To illustrate the use of Theorem \ref{Thm12}, we consider below perturbations $e_n$ with  a power law decay. 
\begin{proposition}
Assume $0<\lip<1$ and suppose that $\|e_m\|\leq \kappa\,\varepsilon_m$ for $\varepsilon_m=b/(m+1)^{a}$ with $a,b>0$. Let $\nu=1+\alpha\gamma/\delta^{a+1}$ with $\gamma=2^a e\,(1+a\,\Gamma(a))$ and $\delta=\alpha(1-\lip)$. Then, for  $m\geq 1$ we have
$$\|x_m-Tx_m\|\leq\mbox{$\kappa(\bm_m+ 2b\,\nu/m^a)$}.$$
\end{proposition}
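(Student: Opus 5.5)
The plan is to specialize Theorem \ref{Thm12} to $\varepsilon_i=b/(i+1)^a$, to replace every $\bm_k$ inside the convolution by the geometric bound from Corollary \ref{prop7}, and then to estimate the resulting discrete convolution of an exponential with a power law by splitting the sum at $m/2$.

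\emph{Reduction.} Theorem \ref{Thm12} gives
\[
\|x_m-Tx_m\|\le\kappa\Big(\bm_m+2\alpha\sum_{i=0}^{m-1}\bm_{m-1-i}\,\varepsilon_i+2\varepsilon_m\Big).
\]
The last term satisfies $2\varepsilon_m=2b/(m+1)^a\le 2b/m^a$, which accounts for the summand $1$ in $\nu$. For the convolution we use $\bm_k\le\lipalpha^{k}$ from Corollary \ref{prop7}. Since $\lipalpha=1-\delta$ with $\delta=\alpha(1-\lip)\in(0,1)$, this gives $\bm_k\le e^{-\delta k}$. It therefore suffices to prove
\[
S_m\triangleq\sum_{i=0}^{m-1}e^{-\delta(m-1-i)}(i+1)^{-a}\le \frac{\gamma}{\delta^{a+1}m^a},
\]
because then $2\alpha b\,S_m\le 2b\,\alpha\gamma/(\delta^{a+1}m^a)$, and adding $2b/m^a$ yields exactly $2b\nu/m^a$.

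\emph{Splitting $S_m$.} Write $S_m=S_m'+S_m''$ according to whether $i+1\ge m/2$ or $i+1<m/2$.

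In $S_m'$ we have $(i+1)^{-a}\le 2^a m^{-a}$, and the exponential factors sum to at most a full geometric series:
\[
\sum_{k\ge0}e^{-\delta k}=\frac{1}{1-e^{-\delta}}\le \frac{e^{\delta}}{\delta}\le\frac e\delta .
\]
Here the middle inequality is $e^{\delta}-1\ge\delta$, and the last uses $\delta\le1$. Hence $S_m'\le 2^a e/(\delta m^a)$, and since $\delta\le1$ this is at most $2^a e/(\delta^{a+1}m^a)$.

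In $S_m''$ we have $m-1-i=m-(i+1)>m/2$. Bound $(i+1)^{-a}\le1$ and sum the exponentials over $k=m-1-i>m/2-1$. This gives $S_m''\le e^{-\delta(m/2-1)}/(1-e^{-\delta})\le e\,e^{-\delta m/2}/\delta$ (the factor $e^{\delta}\le e$ is absorbed by the same estimate as above). Then apply the elementary inequality $e^{-x}\le\Gamma(a+1)\,x^{-a}$ for $x>0$, which follows from $x^a/\Gamma(a+1)\le e^x$, i.e. from the maximum of $x^ae^{-x}$ being at most $\Gamma(a+1)$. With $x=\delta m/2$ this gives
\[
S_m''\le \frac{2^a e\,a\Gamma(a)}{\delta^{a+1}m^a}.
\]
Adding the two pieces gives $S_m\le 2^a e(1+a\Gamma(a))/(\delta^{a+1}m^a)=\gamma/(\delta^{a+1}m^a)$, as required.

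\emph{Expected obstacle.} The only delicate point is the bookkeeping of constants, so that they land exactly on $\gamma=2^ae(1+a\Gamma(a))$. This concerns the off-by-one shift $m/2-1$ in the exponent of $S_m''$ and the inequality $x^a e^{-x}\le\Gamma(a+1)$. If the latter is not clean for small $a$, I would instead invoke the bound $\sup_{x>0}x^ae^{-x}=(a/e)^a\le\Gamma(a+1)$, which holds for all $a>0$. The boundary cases are harmless: when $m=1$ only $S_m'$ contributes, and indices with $i+1=m/2$ exactly are included in $S_m'$.
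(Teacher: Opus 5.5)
Your proof is correct and arrives at exactly the paper's constant $\gamma=2^ae\,(1+a\Gamma(a))$, but by a different route.

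\textbf{How the paper argues.} The paper does not split the convolution. After the change of index $k=m-1-i$, it writes the error term as $2\alpha b\,m^{-a}y_m$ with $y_m=\sum_{k=0}^{m-1}\bm_k\,(m/(m-k))^a$. It then uses the single pointwise inequality $m/(m-k)\le k+1$, valid for $0\le k\le m-1$, to remove $m$ altogether, giving $y_m\le\sum_{k\ge0}e^{-\delta k}(k+1)^a$. Finally it compares this series with an integral via $e^{-\delta k}\le e\,e^{-\delta x}$ and $(k+1)^a\le 2^a(1+x^a)$ on $[k,k+1]$. This yields $y_m\le 2^ae\,(1/\delta+\Gamma(a+1)/\delta^{a+1})$.

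\textbf{Where the constants come from.} In the paper, $2^a$, $e$ and $\Gamma(a+1)$ come from the sum-to-integral comparison and a Gamma integral. In your proof they come from $(m/2)^{-a}$, from the factor $e^{\delta}$ in $1/(1-e^{-\delta})\le e^{\delta}/\delta$, and from $\sup_{x>0}x^ae^{-x}\le\Gamma(a+1)$.

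\textbf{What each route buys.} The paper's argument is shorter and needs no case distinction. Your split is the standard one for an exponential-times-power-law convolution, and it is more informative. Before your final weakenings it gives $m^aS_m\le 2^ae/\delta+e\,m^ae^{-\delta m/2}/\delta$. This shows that the factor $\delta^{-(a+1)}$ is only needed to control a tail term decaying exponentially in $\delta m$. In the paper, by contrast, the step $m/(m-k)\le k+1$ already produces the $m$-independent quantity $\sum_{k\ge0}e^{-\delta k}(k+1)^a$, which is of order $\delta^{-(a+1)}$ as $\delta\to0$.

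\textbf{Correction in $S_m''$.} Do not weaken to $k>m/2-1$. You already observed that $k=m-(i+1)>m/2$, and this gives
$S_m''\le e^{-\delta m/2}/(1-e^{-\delta})\le e^{\delta}e^{-\delta m/2}/\delta\le e\,e^{-\delta m/2}/\delta$.
Starting instead from $e^{-\delta(m/2-1)}/(1-e^{-\delta})$ costs an extra factor $e^{\delta}$. The inequality you wrote, $e^{-\delta(m/2-1)}/(1-e^{-\delta})\le e\,e^{-\delta m/2}/\delta$, is false for $\delta$ near $1$, e.g.\ $\delta=0.9$, which is attained with $\alpha=1$ and $\lip=0.1$.

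\textbf{The inequality $(a/e)^a\le\Gamma(a+1)$.} This holds for all $a>0$ by the one-liner $\Gamma(a+1)\ge\int_a^\infty x^ae^{-x}\,dx\ge a^ae^{-a}$.
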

\begin{proof}
With the change of index $k=m-1-i$ in \eqref{eq:conv} and denoting $y_m\triangleq \sum_{k=0}^{m-1} \bm_{k}\big(\frac{m}{m-k}\big)^{a}$, it suffices to show that
  $y_m\leq\gamma/\delta^{a+1}$.
From Corollary \ref{prop7} we have $\bm_k\le \lipalpha^{\hspace{0.2ex}k}\leq e^{-\delta k}$, which together with $\frac{m}{m-k}\le k+1$ for $0\leq k\leq m-1$, implies
$$y_m\le\mbox{$\sum_{k=0}^{\infty}e^{-\delta k}(k+1)^a$}.$$
Now, for $x\in[k,k+1]$ we have $e^{-\delta k} \le e\,e^{-\delta x}$ and $(k+1)^a\leq 2^a(1+x^a)$, so that
$$y_m
\le 2^ae\int_0^\infty \!\!\! e^{-\delta x}(1+x^a) dx=2^a e\left(1/\delta+\Gamma(a\!+\!1)/\delta^{a+1}\right).$$
Since $\delta<1$, we conclude
$y_m\leq 2^a e\,(1+a\,\Gamma(a))/\delta^{a+1}\!=\gamma/\delta^{a+1}$ as was to be proved.
\end{proof}

\noindent{\bf Acknowledgements.} The authors are grateful for the support provided by FONDECYT grant No. 1241805.

\vspace{2ex}
\noindent{\bf Disclosure.} This paper builds on several ideas developed in our previous work on Krasnosel'ski\u{\i}--Mann iterations for nonexpansive maps
\cite{csv14,bc18,bravo2024stochastic,BravoCominettiPavezSigne2019,bravo2022universal,bravo_cominetti_lee2026halpern,contreras2023optimal}, in which some connections with Markov chains had already been exploited. The key formula \eqref{eq:cmn} was initially discovered through experimentation: we used {\em Mathematica} to generate the polynomials arising from the recursion \eqref{eq:recursive} and then submitted their coefficients to the {\em On-Line Encyclopedia of Integer Sequences} (OEIS). This search suggested a connection with Dyck paths and ultimately led us to Krattenthaler's work on lattice paths \cite{krattenthaler2015}, which paved the way for the precise formulation and proof of Theorem \ref{thm:main}.
The connection with binomial distributions presented in \S\ref{sec:3.1} was identified later with the assistance of {\em ChatGPT}. The proof of Proposition \ref{Prop4}, however, as well as all subsequent analysis, is entirely our own. We take full responsibility for the content of the manuscript.

\appendix

\section{Basic facts on hypergeometric functions}\label{App:Hypergeom}

Let us recall Gauss's and Appell's hypergeometric functions which are given by the series
\begin{align}
    \Gauss(a,b;c;z)&\triangleq \sum_{k=0}^\infty\frac{(a)_k(b)_k}{(c)_k}\frac{z^k}{k!},\label{2F1_series}\\
    \Fone(a;b_1,b_2;c;u,v)&\triangleq \sum_{k=0}^\infty\sum_{l=0}^\infty\frac{(a)_{k+l}(b_1)_k(b_2)_l}{(c)_{k+l}}\frac{u^k}{k!}\frac{v^l}{l!},\label{F1_series}
\end{align}
with $(x)_k=\prod_{i=0}^{k-1}(x+i)$ the rising Pochhammer symbols. 

In general, $a,b,b_1,b_2,c,z,u,v$ are complex numbers and the convergence of the series is assured  for $|z|<1$, and $|u|<1$ and $|v|<1$ respectively.  The  $\Gauss$ series terminates when either $a$ or $b$ are nonpositive integers, which gives a well-defined polynomial for all $z\in\mathbb{C}$. For $\Fone$, the double series terminates  when $a$ is a nonpositive integer, and also if both $b_1$ and $b_2$ are nonpositive integers. Outside their disks of convergence, $\Fone$ and $\Gauss$ are understood by analytic continuation.

In the formulas  \eqref{eq:cmmp1_Chebyshev5} and \eqref{eq:cmmp1_Chebyshev55} the relevant continuation is fixed unambiguously by Euler’s integrals.
We recall that, for real parameters with $c>a>0$, Euler’s integral representations are
\begin{align}
{}\Gauss(a,b;c;z)&=\frac{\Gamma(c)}{\Gamma(a)\Gamma(c-a)}\int_0^1 \frac{x^{a-1}(1-x)^{c-a-1}}{(1-z\,x)^{b}}\;dx,\label{Eq:Euler_2F1}\\
\Fone(a;b_1,b_2;c;u,v)&=
\frac{\Gamma(c)}{\Gamma(a)\Gamma(c-a)}
\int_0^1
\frac{x^{a-1}(1-x)^{c-a-1}}{(1-u\, x)^{b_1}(1-v\, x)^{b_2}}\;dx,\label{Eq:Euler_F1}
\end{align}
which hold if the denominators stay positive on $[0,1]$:  $z<1$ in \eqref{Eq:Euler_2F1}, and $u<1$ and $v<1$ in \eqref{Eq:Euler_F1}. The endpoint $z=1$ in \eqref{Eq:Euler_2F1} requires the additional  integrability condition $c-a-b>0$, with similar conditions for \eqref{Eq:Euler_F1} when $u=1$ and/or $v=1$.
For complex parameters, further branch and argument conditions apply.

\vspace{1ex}
While there exist many  relations between $\Fone$ and $\Gauss$, the next lemma presents a specific identity, used in proving Proposition \ref{Thm:Alt_cmn},  which we could not find  in classical references.

\begin{lemma}\label{Le:Sum_2F1}
For all $a\in\N$ and $b,z,t\in\mathbb{C}$ we have
\begin{equation}\label{eq:main-identity}
\sum_{j=0}^{a} {}\Gauss(j-a,b;1;z)\;t^j
=
(a+1)\Fone(-a;b,1;2;z,1-t).
\end{equation}
\end{lemma}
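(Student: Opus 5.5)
Both sides of \eqref{eq:main-identity} are polynomials in $(z,t)$ whose coefficients are polynomials in $b$. On the left this holds because $j-a$ is a nonpositive integer for $0\le j\le a$; on the right, because the first parameter $-a$ of $\Fone$ is a nonpositive integer. So I would prove the identity by comparing coefficients of the terminating series \eqref{2F1_series}--\eqref{F1_series}. No analytic continuation is needed, and the result then holds for all complex $b,z,t$.

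\emph{Left-hand side.} Using $(j-a)_k/k!=(-1)^k\binom{a-j}{k}$, I would write
$$\sum_{j=0}^{a}\Gauss(j-a,b;1;z)\,t^j=\sum_{k\ge 0}(-1)^k\frac{(b)_k}{k!}\,z^k\sum_{j=0}^{a}\binom{a-j}{k}t^j .$$

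\emph{Right-hand side.} With $b_2=1$ we have $(1)_l/l!=1$. The prefactor combines as
$$(a+1)\frac{(-a)_{k+l}}{(2)_{k+l}}=(-1)^{k+l}\frac{(a+1)!}{(a-k-l)!\,(k+l+1)!}=(-1)^{k+l}\binom{a+1}{k+l+1}.$$
Since $(-1)^l(1-t)^l=(t-1)^l$, this gives
$$(a+1)\Fone(-a;b,1;2;z,1-t)=\sum_{k\ge0}(-1)^k\frac{(b)_k}{k!}\,z^k\sum_{l\ge0}\binom{a+1}{k+l+1}(t-1)^l .$$

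\emph{Reduction.} The two expansions have the same factor $(-1)^k(b)_k z^k/k!$ in front. It therefore suffices to prove, for each fixed $k\ge0$, the one-variable polynomial identity
$$\sum_{j=0}^{a}\binom{a-j}{k}t^j=\sum_{l\ge0}\binom{a+1}{k+l+1}(t-1)^l .$$
Substitute $t=1+s$ and expand $t^j=\sum_l\binom{j}{l}s^l$. The coefficient of $s^l$ on the left is then $\sum_{j=0}^{a}\binom{a-j}{k}\binom{j}{l}$, and this is the only step needing a real identity.

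\emph{Key step.} The sum $\sum_{j=0}^{a}\binom{a-j}{k}\binom{j}{l}$ equals $\binom{a+1}{k+l+1}$ by the upper-index Chu--Vandermonde identity. I would justify it combinatorially: choose a $(k+l+1)$-subset of $\{0,\dots,a\}$ and condition on its $(l+1)$-st smallest element $j$. There are then $\binom{j}{l}$ ways to choose the $l$ elements below $j$ and $\binom{a-j}{k}$ ways to choose the $k$ elements above $j$.

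This is exactly the coefficient on the right, which completes the argument. The only real care needed is sign and index bookkeeping with the Pochhammer symbols. I do not expect any genuine obstacle; the Vandermonde step is the crux but is classical.
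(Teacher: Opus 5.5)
Your proposal is correct and follows essentially the same route as the paper. Both expand the terminating series via $(j-a)_k/k!=(-1)^k\binom{a-j}{k}$ and $(a+1)(-a)_{k+l}/(k+l+1)!=(-1)^{k+l}\binom{a+1}{k+l+1}$, reduce for each fixed $k$ to a one-variable polynomial identity, and conclude by expanding at $t=1$ and applying Vandermonde's identity; the only difference is that you justify $\sum_{j}\binom{j}{l}\binom{a-j}{k}=\binom{a+1}{k+l+1}$ combinatorially, where the paper simply cites it.
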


\begin{proof}
Since $j-a$ in the sum on the left of \eqref{eq:main-identity} is a nonpositive integer, the Gauss hypergeometric series  is a finite sum and then, using 
the equality $(j-a)_k/k!=(-1)^k\binom{a-j}{k}$, we get
\begin{align*}
\sum_{j=0}^{a} {}\Gauss(j-a,b;1;z)\,t^j
&=\sum_{j=0}^{a} \mbox{$\left(\sum_{k=0}^{a-j}\frac{(j-a)_k(b)_k}{k!}\frac{z^k}{k!}\right)$}t^j
=
\sum_{k=0}^{a}\mbox{$ (b)_k\frac{(-z)^k}{k!}\left(\sum_{j=0}^{a-k}
\binom{a-j}{k}t^j\right)$}.
\end{align*}
Now, expanding the right hand side of \eqref{eq:main-identity}, and using the identities $(1)_l=l!$, $(2)_{k+l}=(k+l+1)!$, and
$(a+1)\,(-a)_{k+l}/(k+l+1)!=(-1)^{k+l}\binom{a+1}{k+l+1}$, after rearranging terms we have
\begin{align*}
(a+1)\Fone(-a;b,1;2;z,1-t)
&=\sum_{k=0}^a\mbox{$(b)_k\frac{(-z)^k}{k!}\left(\sum_{l=0}^{a-k}\binom{a+1}{k+l+1}(t-1)^l\right)$}.
\end{align*}
Thus, it remains to prove that
$$
\mbox{$\sum_{j=0}^{a-k}\binom{a-j}{k}t^j
=
\sum_{l=0}^{a-k}
\binom{a+1}{k+l+1}(t-1)^l.$}
$$
Denoting $P(t)$ the polynomial on the left, its Taylor expansion at $t=1$ is
\begin{equation*}
    P(t)=\mbox{$\sum_{l=0}^{a-k}\frac{1}{l!}P^{(l)}(1)(t-1)^l$}\quad\text{with}\quad
\mbox{$\frac{1}{l!}P^{(l)}(1)=
\sum_{j=l}^{a-k}\binom{j}{l}\binom{a-j}{k}$}.
\end{equation*}
The conclusion follows since Vandermonde's identity
shows that the latter sum is exactly 
$\binom{a+1}{k+l+1}$.
\end{proof}

\section{Probability estimates for expected rewards}\label{App:ExpectReward}

From Lemma \ref{L4} we have $\PP(H_{m,n}^i)=\alpha\lip\,\cm_{m-1-i,n-1-i}$ for $0\leq i<m<n$, which is equivalent to
\begin{equation}\label{pdoti}
\PP(H_{m,n}^i)=\alpha\lip\,\left[ \lipalpha^{\hspace{0.2ex}m-1-i}\,\PP(X_{n-1-i}>Y_{m-1-i})-\lipalpha^{\hspace{0.2ex}n-1-i}\,\PP(X_{m-1-i}>Y_{n-1-i})\right].
\end{equation}
where $(X_n)_{n\geq 1}$ and $(Y_m)_{m\geq 1}$ are the Bernoulli counting processes from Section \ref{sec:3.1}.
Adopting  the (asymmetric) conventions $X_l\equiv-\infty$ and $Y_l\equiv+\infty$ for $l<0$,
this formula is also valid for $i\geq m$.

As in Section \ref{sec:3} we could derive explicit formulas for $\PP(V^j_{m,n})$, involving more complicated combinations of hypergeometric functions. However, in the contiguous case $(m,m+1)$ we can show that $\PP(V^j_{m,m+1})$
is bounded above by $\PP(H^j_{m,m+1})$, which is enough for our purposes.
To this end, we first establish two auxiliary lemmas.

\begin{lemma}\label{L5}
For all $m,j,k\in \mathbb{N}$ we have
\begin{equation}\label{eq:ultima}
\mbox{$\alpha \sum_{i=0}^{j-1} \binom{m-1-i}{k}\alpha^k(1-\alpha)^{m-1-i-k}$}=
\PP(X_m> k)-\PP(X_{m-j}>k).
\end{equation}
\end{lemma}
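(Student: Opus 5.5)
The plan is to reduce \eqref{eq:ultima} to Lemma \ref{L3} by splitting the sum into two tail-type sums, each of which Lemma \ref{L3} evaluates directly.

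Lemma \ref{L3} applied with $n=m$ gives
$$\PP(X_m>k)=\alpha\sum_{i=0}^{m-1}\binom{m-1-i}{k}\alpha^k(1-\alpha)^{m-1-i-k}.$$
The sum in \eqref{eq:ultima} is the partial sum of this expression over $0\le i\le j-1$. I will first treat the case $j\le m$. Then the sum over $i$ from $0$ to $m-1$ splits into the indices $i\le j-1$ and the indices $j\le i\le m-1$.

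For the tail $j\le i\le m-1$, I substitute $i'=i-j$, so that $i'$ runs over $0,\dots,m-j-1$. Since $m-1-i=(m-j)-1-i'$, the tail becomes
$$\alpha\sum_{i'=0}^{m-j-1}\binom{m-j-1-i'}{k}\alpha^k(1-\alpha)^{m-j-1-i'-k}.$$
By Lemma \ref{L3} with $n=m-j$, this equals $\PP(X_{m-j}>k)$. Subtracting the tail from the full sum gives \eqref{eq:ultima}. The degenerate case $j=m$ is consistent, because the tail is empty and $\PP(X_0>k)=0$.

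For $j>m$, the extra indices $i\ge m$ contribute nothing, since then $m-1-i<0$ and $\binom{m-1-i}{k}=0$ by Krattenthaler's convention. The left side therefore reduces to $\PP(X_m>k)$. This is consistent with the right side once we use the convention $X_l\equiv-\infty$ for $l<0$ from the start of this appendix, under which $\PP(X_{m-j}>k)=0$.

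I also need to check that the negative powers of $(1-\alpha)$ cause no trouble. The binomial coefficient vanishes whenever the exponent $m-1-i-k$ would be negative, so every such term is zero. The only genuine point of care is therefore the index range and convention in the case $j>m$; everything else is a direct reindexing and two applications of Lemma \ref{L3}.
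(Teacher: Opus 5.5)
Your proposal is correct and follows essentially the same route as the paper: write $\PP(X_m>k)$ and $\PP(X_{m-j}>k)$ via Lemma~\ref{L3}, reindex the second sum by $i\mapsto i+j$ so that it becomes the tail $j\le i\le m-1$ of the first, and subtract. The one difference is that you also treat the boundary cases $j=m$ and $j>m$ explicitly, using the vanishing-binomial convention on the left and $\PP(X_{m-j}>k)=0$ on the right; the paper leaves these cases implicit, so your version is slightly more complete.
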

\begin{proof} For $k>m-1-i$ we have $\binom{m-1-i}{k}=0$ and the corresponding summand vanishes. We adopt the same convention when $\alpha=1$, in which case the negative powers $(1-\alpha)^{m-1-i-k}$ would otherwise be undefined.
Now, from Lemma \ref{L3} we have
\begin{align*}
\PP\!\bigl(X_m>k\bigr)&=\alpha\mbox{$\sum_{i=0}^{m-1}\binom{m-1-i}{k}\alpha^k(1\!-\!\alpha)^{m-1-i-k}$},\\
\PP\!\bigl(X_{m-j}>k\bigr)&=\alpha\mbox{$\sum_{i=0}^{m-1-j}\binom{m-1-i-j}{k}\alpha^k(1\!-\!\alpha)^{m-1-j-i-k}$}\\
&=\alpha\mbox{$\sum_{i=j}^{m-1}\binom{m-1-i}{k}\alpha^k(1\!-\!\alpha)^{m-1-i-k}$}.
\end{align*}
Subtracting both equalities we obtain \eqref{eq:ultima}.
\end{proof}

\begin{lemma}\label{L77}
Let $X\sim \Bin(n,\alpha)$ and $Y\sim \Bin(n,r)$  independent with $\palpha=\alpha\lip/\lipalpha$ and  $n\in\N$.
Then
\[
        \PP(X\!=\!Y\!-\!1)=\lip\,\PP(X\!=\!Y\!+\!1).
\]
\end{lemma}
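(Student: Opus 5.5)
The plan is a direct computation with binomial probabilities, summing over the common value of $X$ and $Y$. Write $\beta=1-\alpha$ and note $1-\palpha=(1-\alpha)/\lipalpha$ and $\palpha=\alpha\lip/\lipalpha$. Then
\begin{align*}
\PP(X\!=\!Y\!-\!1)&=\sum_{k}\binom{n}{k}\alpha^k(1-\alpha)^{n-k}\binom{n}{k+1}\palpha^{k+1}(1-\palpha)^{n-k-1},\\
\PP(X\!=\!Y\!+\!1)&=\sum_{k}\binom{n}{k+1}\alpha^{k+1}(1-\alpha)^{n-k-1}\binom{n}{k}\palpha^{k}(1-\palpha)^{n-k}.
\end{align*}
Both sums have the same binomial factor $\binom{n}{k}\binom{n}{k+1}$ in the $k$-th term. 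This suggests comparing them term by term rather than manipulating the full distributions.

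The key step is to show that the ratio of the $k$-th terms is independent of $k$. Dividing the first summand by the second gives
$$\frac{\palpha\,(1-\alpha)}{\alpha\,(1-\palpha)}.$$
Substituting $\palpha=\alpha\lip/\lipalpha$ and $1-\palpha=(1-\alpha)/\lipalpha$ turns this into $\lip$. The ratio is therefore exactly $\lip$ for every $k$, and summing over $k$ yields $\PP(X\!=\!Y\!-\!1)=\lip\,\PP(X\!=\!Y\!+\!1)$.

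The only care needed is at the boundary $\alpha=1$, where $1-\alpha=0$ and the ratio above is $0/0$. There I would avoid division and instead check the termwise identity in multiplied form: $\palpha(1-\alpha)\cdot(\text{common factor})=\lip\,\alpha(1-\palpha)\cdot(\text{common factor})$. This is a polynomial identity in $\alpha$ and so holds at $\alpha=1$ by continuity. The same form also covers the degenerate terms where a power $(1-\alpha)^{n-k-1}$ or $(1-\palpha)^{n-k-1}$ would appear with exponent $-1$ when $k=n$, since those terms vanish anyway because $\binom{n}{k+1}=0$. I don't expect a real obstacle: the only nontrivial point is noticing that the relation $\lipalpha=1-\alpha+\alpha\lip$ makes the odds ratio $\frac{\palpha/(1-\palpha)}{\alpha/(1-\alpha)}$ equal to $\lip$.
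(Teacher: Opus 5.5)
Your proposal is correct and follows the paper's proof: both compare the summands $\PP(X=k,Y=k+1)$ and $\PP(X=k+1,Y=k)$ for $0\le k\le n-1$, note that they share the factor $\binom{n}{k}\binom{n}{k+1}$, and show their ratio $r(1-\alpha)/(\alpha(1-r))$ equals $\lip$. The only difference is at $\alpha=1$. The paper simply notes that both sides vanish there, since $r=1$ and $X=Y=n$. Your multiplied form also works, and in fact $r(1-\alpha)=\lip\,\alpha(1-r)$ holds exactly for every $\alpha$, so you do not need a continuity argument.
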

\begin{proof}
If $n=0$ or $\alpha=1$ both sides are 0, so we may assume $n\geq 1$ and $\alpha\in (0,1)$. For \(0\leq k\leq n-1\),
\begin{align*}
\PP(X=k,Y=k+1)&=\mbox{$\binom{n}{k}\binom{n}{k+1}\alpha^k(1-\alpha)^{n-k}r^{k+1}(1-r)^{n-k-1}$}, \\
\PP(X=k+1,Y=k)&=\mbox{$\binom{n}{k+1}\binom{n}{k}\alpha^{k+1}(1-\alpha)^{n-k-1}
r^k(1-r)^{n-k}$},
\end{align*}
and therefore, since $\palpha/(1-\palpha)=\lip\,\alpha/(1-\alpha)$, we obtain
$$ \frac{\PP(X=k,Y=k+1)}{\PP(X=k+1,Y=k)}=\frac{r(1-\alpha)}{\alpha(1-r)}= \lip.
$$
Summing over \(k=0,\ldots,n-1\) gives $\PP(X\!=\!Y\!-\!1)=\lip\,\PP(X\!=\!Y\!+\!1)$.
\end{proof}

We may now proceed to prove the announced domination.
\begin{proposition}\label{P11}
For every $0\le j\le m-1$ we have $       \PP(V^j_{m,m+1})\leq \PP(H^j_{m,m+1})$.
\end{proposition}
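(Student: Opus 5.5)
The plan is to compute both probabilities in closed form in terms of the counting processes $X_n$, $Y_m$ of \S\ref{sec:3.1}, and then to reduce the difference $\PP(H^j_{m,m+1})-\PP(V^j_{m,m+1})$ to a quantity whose sign is settled by Lemma \ref{L77}.

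\textbf{Step 1: the $H$ side.} Lemma \ref{L4} (equivalently \eqref{pdoti}) gives, with $N\triangleq m-j\ge 1$,
$$\PP(H^j_{m,m+1})=\alpha\lip\,\cm_{N-1,N}=\alpha\lip\bigl[\lipalpha^{N-1}\PP(X_N>Y_{N-1})-\lipalpha^{N}\PP(X_{N-1}>Y_N)\bigr].$$

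\textbf{Step 2: the $V$ side.} I start from
$$\PP(V^j_{m,m+1})=\sum_{i=0}^{j-1}p^{i,j}_{m,m+1},$$
with $p^{i,j}_{m,m+1}$ given by \eqref{pmnij1}--\eqref{pmnij2}, where
$$L^{k,i,j}_{m,m+1}=\binom{N}{k}\binom{m-1-i}{k}-\binom{m-i}{k}\binom{N-1}{k}.$$
I then exchange the sums over $i$ and $k$.
\begin{itemize}
\item For the first product I factor out $\binom{N}{k}(\alpha\lip)^{k}(1-\alpha)^{N-k}$ and apply Lemma \ref{L5} to the inner sum over $i$. This gives $\PP(X_m>k)-\PP(X_{m-j}>k)$.
\item For the second product I factor out $\binom{N-1}{k}(\alpha\lip)^k(1-\alpha)^{N-1-k}$ and apply Lemma \ref{L5} with $m$ replaced by $m+1$. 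This gives $\PP(X_{m+1}>k)-\PP(X_{m+1-j}>k)$.
\item Exactly as in the proof of Proposition \ref{Prop4}, I convert the binomial weights using $\alpha\lip=\palpha\lipalpha$ and $1-\alpha=(1-\palpha)\lipalpha$, and condition on $Y$.
\end{itemize}
This should yield
$$\PP(V^j_{m,m+1})=\alpha\lip\Bigl[\lipalpha^{N}\bigl(\PP(X_m>Y_N)-\PP(X_N>Y_N)\bigr)-\lipalpha^{N-1}\bigl(\PP(X_{m+1}>Y_{N-1})-\PP(X_{N+1}>Y_{N-1})\bigr)\Bigr].$$
The indices need to be checked carefully, in particular the extra factor $\alpha\lip$ of the final jump.

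\textbf{Step 3: compare.} I write $X_m=X_N+X'_j$ and $X_{m+1}=X_{N+1}+X'_j$, with $X'_j\sim\Bin(j,\alpha)$ independent of everything else. Each bracket $\PP(X_m>Y_N)-\PP(X_N>Y_N)$ then becomes $\PP(Y_N-X'_j<X_N\le Y_N)$, which is a sum over levels $l=0,\dots,j-1$ of point probabilities $\PP(X_N=Y_N-l,\;X'_j>l)$; the analogous expression holds for the $(N+1,N-1)$ pair. The difference $H-V$ is then a combination of these terms together with the two terms from Step 1. My aim is to regroup it as a sum of nonnegative pieces.

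The main obstacle is exactly this regrouping. What I would try first:
\begin{itemize}
\item Peel off one step via $X_{N+1}=X_N+B_{N+1}$ and $Y_N=Y_{N-1}+D_N$, so that every term involves the pair $(X_N,Y_N)$, or the pair $(X_{N-1},Y_{N-1})$, at equal sample sizes.
\item The dominant discrepancy should then collapse to terms of the form $\PP(X_N=Y_N-1)$ and $\PP(X_N=Y_N+1)$, weighted by $\lipalpha$ and by $\alpha,\palpha$.
\item Lemma \ref{L77} gives $\PP(X=Y-1)=\lip\,\PP(X=Y+1)$ at equal sizes, and combined with $\lip\le 1$ this fixes the sign.
\end{itemize}
If a direct term-by-term regrouping fails, a fallback is induction on $j$. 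The case $j=0$ is trivial, since $\PP(V^0)=0$. The increment from $j$ to $j+1$ adds a single extra level of $X'$, and this reduces the required inequality to one instance of Lemma \ref{L77} (with sample size $N-1$ or $N$) plus the monotonicity $\lipalpha\le 1$.
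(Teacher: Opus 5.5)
Your Steps 1 and 2 are correct and coincide with the paper's reduction; your $N$ is the paper's $b=m-j$. The two formulas you obtain for $\PP(H^j_{m,m+1})$ and $\PP(V^j_{m,m+1})$ are exactly the ones derived there from Lemma~\ref{L5} and the conditioning argument of Proposition~\ref{Prop4}.

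\textbf{Step 3 is not carried out, so the inequality is not proved.} That step holds the entire content of the proof, and you only list what you would try. The missing computation is as follows. Put $X=X_{N-1}$ and $Y=Y_{N-1}$, and keep $M=X_m$ as a single variable. Condition each of the six probabilities on its one or two extra Bernoulli steps: $X_N=X+B_N$, $Y_N=Y+D_N$, $X_{N+1}=X+B_N+B_{N+1}$, $X_{m+1}=M+B_{m+1}$. Using $\lipalpha(1-\palpha)=1-\alpha$ and $\lipalpha\palpha=\alpha\lip$, every term in $\PP(X>Y)$ and $\PP(X=Y)$ cancels, and
\[
\frac{\PP(H^j_{m,m+1})-\PP(V^j_{m,m+1})}{\alpha\lip\,\lipalpha^{N-1}}
=\alpha\bigl[\PP(M=Y)+(1-\lip)\PP(M>Y)+\lip\,\PP(M=Y+1)\bigr]+\alpha^2\bigl[\lip\,\PP(X=Y+1)-\PP(X=Y-1)\bigr].
\]
Lemma~\ref{L77} at sample size $N-1$ makes the second bracket vanish identically. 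The first bracket is manifestly nonnegative.

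This is not what your sketch predicts. Lemma~\ref{L77} gives an exact cancellation; it is not combined with $\lip\le 1$ to fix a sign. The terms coming from $X_m$ and $X_{m+1}$ do not collapse onto $\pm1$ point probabilities. They survive as a nonnegative block, and $\lip\le1$ enters only through $1-\lip\ge 0$.

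\textbf{Your level decomposition can also be closed, but it needs an ingredient you do not mention.} Set $p_d=\PP(X-Y=d)$ and $q_l=\PP(X'_j>l)$. Then
\[
\frac{\PP(V^j_{m,m+1})}{\alpha^2\lip\,\lipalpha^{N-1}}=\sum_{l\ge0}q_l\bigl[\alpha\lip\, p_{-l}-(1-\alpha)p_{-l-1}+(1-\alpha)\lip\, p_{-l+1}-\alpha\, p_{-l-2}\bigr],
\qquad
\frac{\PP(H^j_{m,m+1})}{\alpha^2\lip\,\lipalpha^{N-1}}=(1-\lip)\PP(X>Y)+p_0+\lip\, p_1 .
\]
The coefficients of $p_{-k}$ for $k\ge 2$ are nonpositive only because $l\mapsto q_l$ is nonincreasing (together with $\lip\le1$). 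After substituting $p_{-1}=\lip\,p_1$ from Lemma~\ref{L77}, the difference consists of nonnegative terms. For example, the $p_1$ coefficient becomes $\lip[1-\alpha\lip q_1-(1-\alpha)\lip q_2]$.

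\textbf{Your fallback induction cannot work as described.} For fixed $N$, $\PP(H^j_{m,m+1})=\alpha\lip\,\cm_{N-1,N}$ does not depend on $j$. Meanwhile $\PP(V^j_{m,m+1})$ is nondecreasing in $j$. By the first display, its increment is $\alpha^3\lip\,\lipalpha^{N-1}\bigl[\lip\,\PP(M=Y+1)-\PP(M=Y-1)\bigr]$ with $M=X_{N+j}$. This is $\ge0$ by a log-concavity refinement of Lemma~\ref{L77}. So the case $j$ gives no information about the case $j+1$. The induction step would have to re-prove the full inequality, not reduce to a single instance of Lemma~\ref{L77}.
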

\begin{proof}
For $j=0$ this is trivial since $\PP(V^0_{m,m+1})=0$, so let us assume $j\geq 1$.
From \eqref{pmnij1}-\eqref{pmnij2} we can write $\PP(V_{m,m+1}^j)=\alpha\lip(A-B)$ with
\begin{align*}
     A&=\mbox{$\alpha\sum_{k\geq 0}\binom{m-j}{k}(\alpha\lip)^{k}(1-\alpha)^{m-j-k}\sum_{i=0}^{j-1}\binom{m-1-i}{k}\alpha^{k}(1-\alpha)^{m-1-i-k}$},\\
     B&=\mbox{$\alpha\sum_{k\geq 0}\binom{m-1-j}{k}(\alpha\lip)^{k}(1-\alpha)^{m-1-j-k}\sum_{i=0}^{j-1}\binom{m-i}{k}\alpha^{k}(1-\alpha)^{m-i-k}$}.
\end{align*}
Then, using Lemma \ref{L5} and proceeding as in the proof of Proposition \ref{Prop4}, we obtain
\begin{align*}
     A &=\mbox{$\sum_{k\geq 0}\binom{m-j}{k}(\alpha\lip)^{k}(1-\alpha)^{m-j-k}\big(\PP(X_m>k)-\PP(X_{m-j}>k)\big)$}\\
       &=\lipalpha^{\hspace{0.2ex}m-j}\mbox{$\big(\PP(X_m> Y_{m-j})-\PP(X_{m-j}>Y_{m-j})\big)$},\\[1ex]
     B &=\mbox{$\sum_{k\geq 0}\binom{m-1-j}{k}(\alpha\lip)^{k}(1-\alpha)^{m-1-j-k}\big(\PP(X_{m+1}> k)-\PP(X_{m+1-j}>k)\big)$},\\
       &=\lipalpha^{\hspace{0.2ex}m-1-j}\mbox{$\big(\PP(X_{m+1}> Y_{m-1-j})-\PP(X_{m+1-j}>Y_{m-1-j})\big)$}.
\end{align*}
Setting $b=m-j$, the previous formula for $\PP(V^j_{m,m+1})$ and \eqref{pdoti} for $\PP(H^j_{m,m+1})$, give
\begin{align*}
\PP(V^j_{m,m+1})&=\alpha\lip\left[\lipalpha^{\hspace{0.2ex}b} \left(\PP(X_m\!>\!Y_b)-\PP(X_b\!>\!Y_b)\right)-
 \lipalpha^{\hspace{0.2ex}b-1} \left(\PP(X_{m+1}\!>\!Y_{b-1})-\PP(X_{b+1}\!>\!Y_{b-1})\right)\right],\\
\PP(H^j_{m,m+1})&=\alpha\lip\left[\lipalpha^{\hspace{0.2ex}b-1}\PP(X_b\!>\!Y_{b-1})-\lipalpha^{\hspace{0.2ex}b}\PP(X_{b-1}\!>\!Y_b)\right].
\end{align*}

Let us show that $\Delta\triangleq\big(\PP(H^j_{m,m+1})-\PP(V^j_{m,m+1})\big)/(\alpha\lip\, \lipalpha^{\hspace{0.2ex}b-1})$
is nonnegative. Denoting for simplicity $X=X_{b-1}$, $M= X_m$, and $Y=Y_{b-1}$, we have
\begin{align*}
\Delta&=
\PP(X_b\!>\!Y)-\lipalpha\,\PP(X\!>\!Y_b)-\lipalpha\,\PP(M\!>\!Y_b)+\lipalpha\,\PP(X_b\!>\!Y_b)+\PP(X_{m+1}\!>\!Y)-\PP(X_{b+1}\!>\!Y).
\end{align*}
Using the identities $X_b=X+B_b$ and $Y_b=Y+D_b$ we can expand
\begin{align*}
\PP(X_b\!>\!Y)& =\PP(X\!>\!Y)+\alpha\,\PP(X\!=\!Y),\\
\PP(X\!>\!Y_b)&=\PP(X\!>\!Y)-\palpha\,\PP(X\!=\!Y\!+\!1),\\
\PP(M\!>\!Y_b)&=\PP(M\!>\!Y)-\palpha\,\PP(M\!=\!Y\!+\!1).
\end{align*}
Similarly,
\begin{align*}
\PP(X_b\!>\!Y_b)
&=\PP(X\!>\!Y_b)+\alpha\,\PP(X\!=\!Y_b) \\
&=\PP(X\!>\!Y)-\palpha\,\PP(X\!=\!Y\!+\!1)+\alpha(1-\palpha)\,\PP(X\!=\!Y)+\alpha\, \palpha\,\PP(X\!=\!Y\!+\!1) \\
&=\PP(X\!>\!Y)+\alpha(1-\palpha)\PP(X\!=\!Y)-\palpha(1-\alpha)\PP(X\!=\!Y\!+\!1),\\[1ex]
\PP(X_{m+1}\!>\!Y)&=\PP(M\!>\!Y)+\alpha\,\PP(M\!=\!Y).
\end{align*}
Finally, since $X_{b+1}=X+Z$ where $Z\triangleq B_b+B_{b+1}$
satisfies
$\PP(Z\!=\!0)=(1\!-\!\alpha)^2$, $\PP(Z\!=\!1)=2\alpha(1\!-\!\alpha)$ and
$\PP(Z\!=\!2)=\alpha^2$, we get
$$\PP(X_{b+1}\!>\!Y)=\PP(X\!>\!Y)+\alpha(2-\alpha)\PP(X\!=\!Y)+\alpha^2\PP(X\!=\!Y\!-\!1).
$$
Substituting all these identities into the expression of $\Delta$ and recalling that  $\lipalpha=1-\alpha+\alpha\,\lip$, and $\palpha=\alpha\,\lip/\lipalpha$ so that $\alpha\,\lipalpha\,(1-r)=\alpha(1-\alpha)$, the expression of $\Delta$ simplifies to
\begin{align*}
\Delta &=\alpha\big[\PP(M\!=\!Y) +(1-\lip)\,\PP(M\!>\!Y)  +\lip\,\PP(M\!=\!Y\!+\!1)\big]+\alpha^2\big[\lip\,\PP(X\!=\!Y\!+\!1)-\PP(X\!=\!Y\!-\!1)\big].
\end{align*}
To conclude, we note that the first bracket is nonnegative, while Lemma \ref{L77}  shows that the expression in the second bracket vanishes. 
\end{proof}

\section{Alternative expressions for \texorpdfstring{$p_{m,n}^{i,j}$}{pmnij}}\label{App:Alt_pmnij}

Consider the Markov chain in \S{}\ref{Sec:2.2}, and let  $p_{m,n}^{i,j}$ be the probability that, starting from $(m,n)\in \D$, the chain visits a state $(i,j)\in \D$ with $0\le i<m$ and $i< j<n$. As noted in \S \ref{sec:6}, this 
is given by \eqref{pmnij1}-\eqref{pmnij2} where
the 
sum \eqref{pmnij1} is finite since $L_{m,n}^{k,i,j}=0$ for $k\geq\min\{n-j,m-i\}$. 
For $\alpha\in(0,1)$, adapting the proof of Proposition \ref{Thm:Alt_cmn}, with $\zalpha=(\frac{\alpha}{1-\alpha})^2\lip$ and
  $A_{m,n}^{i,j}\triangleq(1\!-\!\alpha)^{m+n-i-j-2}$, we get
\begin{equation*}
p_{m,n}^{i,j}=
\mbox{$\alpha^2\lip A_{m,n}^{i,j}\left(\Gauss(j+1-n,i+1-m;1;\zalpha)-\mathbbm{1}_{\{j<m\}}\Gauss(i+1-n,j+1-m;1;\zalpha)\right)$}.
\end{equation*}
Alternatively, we can express $p_{m,n}^{i,j}$ in terms of binomial distributions, valid for all $\alpha\in(0,1]$.
\begin{proposition}Let $\pp=1-\alpha+\alpha\,\sigma$, $\pq=\alpha\sigma/\pp$, and let 
$M_a\sim\Bin(a,\pq)$
and $N_b\sim\Bin(b,\pq)$ denote independent binomial variables. Then for $0\le m<n$, $0\le i<m$, and $i<j<n$, we have
\begin{equation}\label{Eq:p_mn_ij}
p_{m,n}^{i,j}=
\alpha^2\lip\pp^{m+n-i-j-2}\left(\PP(M_{n-1-j}=N_{m-1-i})-\mathbbm{1}_{\{j<m\}}\,\PP(M_{n-1-i}=N_{m-1-j})\right).
\end{equation}
\end{proposition}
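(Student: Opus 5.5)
Starting from \eqref{pmnij1}-\eqref{pmnij2}, I will treat the two terms of $L^{k,i,j}_{m,n}$ separately. Each gives a Vandermonde-type sum $\sum_k \binom{a}{k}\binom{b}{k} x^k$, and I will identify that sum with the probability that two independent binomials coincide. The key elementary identity, for independent $M_a\sim\Bin(a,\pq)$ and $N_b\sim\Bin(b,\pq)$, is
\begin{equation*}
\PP(M_a=N_b)=\sum_{k\ge0}\binom{a}{k}\binom{b}{k}\,\pq^{2k}(1-\pq)^{a+b-2k}.
\end{equation*}
This follows by conditioning on the common value $k$, using the independence of $M_a$ and $N_b$. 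The convention $\binom{a}{k}=0$ outside $0\le k\le a$ makes the sum finite and avoids negative powers.

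\textbf{Step 1: rewrite the weight.} With $\pp=1-\alpha+\alpha\sigma$ and $\pq=\alpha\sigma/\pp$, we have $1-\pq=(1-\alpha)/\pp$. Hence
\[
(\lip\alpha^2)^{k}(1-\alpha)^{N-2k}=\pp^{N}\,\pq^{2k}(1-\pq)^{N-2k},\qquad N=n+m-i-j-2,
\]
because $(\alpha\sigma)^{2k}=\lip^k\alpha^{2k}$ and the powers of $\pp$ combine to $\pp^{2k}\pp^{N-2k}=\pp^N$. Factoring the remaining $\lip\alpha^2$ out of $(\lip\alpha^2)^{k+1}$ gives
\[
p^{i,j}_{m,n}=\alpha^2\lip\,\pp^{N}\sum_k L^{k,i,j}_{m,n}\,\pq^{2k}(1-\pq)^{N-2k}.
\]
This step is valid for all $\alpha\in(0,1]$. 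When $\alpha=1$ we get $\pq=1$ and only terms with exponent zero survive, consistent with the convention $0^0=1$; nothing is divided by $1-\alpha$.

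\textbf{Step 2: the first term.} For $\binom{n-1-j}{k}\binom{m-1-i}{k}$, set $a=n-1-j$ and $b=m-1-i$. Then $a+b=N$, so the identity above gives exactly $\PP(M_{n-1-j}=N_{m-1-i})$.

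\textbf{Step 3: the second term.} For $\binom{n-1-i}{k}\binom{m-1-j}{k}$, set $a=n-1-i$ and $b=m-1-j$. Again $a+b=N$, which yields $\PP(M_{n-1-i}=N_{m-1-j})$ whenever $m-1-j\ge0$, i.e.\ $j<m$. When $j\ge m$, the binomial $\binom{m-1-j}{k}$ vanishes for every $k\ge0$, which accounts for the indicator $\mathbbm{1}_{\{j<m\}}$.

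\textbf{Expected obstacle.} The only delicate point is bookkeeping: checking that the exponents match for $\alpha=1$ and for the range of $k$. Since the same $N$ appears in both terms and the conventions already truncate the sums, I expect no real difficulty.
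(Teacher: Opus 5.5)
Your proposal is correct and follows essentially the same route as the paper. Both rewrite $(\alpha^2\lip)^k(1-\alpha)^{N-2k}$ as $\pp^{N}\pq^{2k}(1-\pq)^{N-2k}$ using $1-\pq=(1-\alpha)/\pp$, and then recognize each of the two terms of $L^{k,i,j}_{m,n}$ as $\PP(M_a=N_b)$ with $a+b=m+n-i-j-2$ (the indicator $\mathbbm{1}_{\{j<m\}}$ and the case $\alpha=1$ are handled correctly, since nonzero terms always have $k\le\min\{a,b\}$).
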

\begin{proof} Since $\pq^{2k}(1-\pq)^{a+b-2k}=\pp^{-(a+b)}(\alpha^2\lip)^k(1-\alpha)^{a+b-2k}$ we have
\begin{align*}
   \PP(M_a=N_b)&=\mbox{$\sum_{k\geq 0}\binom{a}{k}\binom{b}{k}\pq^{2k}(1-\pq)^{a+b-2k}$}\\
    & =\mbox{$\pp^{-(a+b)}\,\sum_{k\geq 0}\binom{a}{k}\binom{b}{k}(\alpha^2\lip)^k(1-\alpha)^{a+b-2k}$}.
\end{align*}
Using this identity twice, first with 
$a=n-1-j, b=m-1-i$, and then $a=n-1-i, b=m-1-j$,
and noting that in both cases $a+b=m+n-i-j-2$,
it follows that \eqref{Eq:p_mn_ij} coincides with \eqref{pmnij1}.
\end{proof}

From \eqref{Eq:p_mn_ij} we can derive formulas for $p_{m,n}^{i,j}$ in terms of trigonometric integrals. Namely,
denoting $f(\theta)\triangleq 1-\pq+\pq\, e^{\ii\,\theta}$, the characteristic function of $M_a-N_b$ is
$\EE\big[e^{\ii\,\theta (M_a-N_b)}\big]=f(\theta)^af(-\theta)^b$
so that using the inversion formula (see {\em e.g.} \citet[p.~511]{feller1971vol2}) we have
$$\PP(M_a=N_b)=\frac{1}{2\pi}\int_{-\pi}^\pi f(\theta)^af(-\theta)^b\,d\theta.$$
Then, setting $z_{\alpha}(\theta)\triangleq \pp\,f(\theta)=1-\alpha+\alpha\,\sigma\, e^{\ii\,\theta}=r_{\!\alpha}(\theta)e^{\ii\,\varphi_{\!\alpha}(\theta)}$ as in \S{}\ref{sec:trigo}, the identity 
\eqref{Eq:p_mn_ij} becomes
\begin{align*}
    p_{m,n}^{i,j}&=
\frac{\alpha^2\lip}{2\pi}\!\int_{-\pi}^\pi\left(z_{\alpha}(\theta)^{n-1-j}z_\alpha(-\theta)^{m-1-i}-\mathbbm{1}_{\{j<m\}}\,z_{\alpha}(\theta)^{n-1-i}z_\alpha(-\theta)^{m-1-j}\right)\,d\theta\\
    &=\frac{\alpha^2\lip}{\pi}\!\!\int_{0}^\pi r_{\!\alpha}(\theta)^{n+m-i-j-2}\big[ \cos((n\!-\!m\!+\!i\!-\!j)\varphi_{\!\alpha}(\theta))-\mathbbm{1}_{\{j<m\}}\,\cos((n\!-\!m\!+\!j\!-\!i)\varphi_{\!\alpha}(\theta))\big]\,d\theta.
\end{align*}

\section{Iteration  \texorpdfstring{\eqref{eq:km_a}}{kmconv} in convex metric spaces} \label{App:ConvexMetricSpace}

Convex metric spaces are a generalization of Busemann convex spaces, which themselves contain all CAT(0) spaces, including $\RR$-trees and Hadamard manifolds.  Specifically, a metric space $(X,d)$ is said to be a {\em convex metric space} in the sense of \citet{Takahashi1970}, when there is a map $W:X\times X\times [0,1]\to X$
such that for all points $x,y,z\in X$ and $t\in[0,1]$
\begin{equation}\label{conv}
d(W(x,y,t),z)\leq (1-t)\,d(x,z)+t\, d(y,z).
\end{equation}
Using this inequality with $z=x$ and $z=y$ we get
\begin{align}
d(W(x,y,t),x)&\leq t\, d(y,x),\label{W1}\\
d(W(x,y,t),y)&\leq (1-t)\, d(x,y),\label{W2}
\end{align}
and the triangle inequality $d(x,y)\leq d(x,W(x,y,t))+d(W(x,y,t),y)$ implies that \eqref{W1} and \eqref{W2}  must hold with equality.  Thus $W(x,y,t)$ is located at the proportional distances from $x$ and $y$, and in particular $W(x,y,0)=x$ and $W(x,y,1)=y$. 

The map $W$ plays the role of convex combinations in regular normed spaces.
For this reason, and for notational convenience, we denote $W(x,y,t)$ as $(1-t)\,x\oplus t\,y$.
A subset $C\subseteq X$ is called {\em $W$-convex} if $(1-t)\,x\oplus t\,y\in C$ for all $x,y\in C$ and $0\leq t\leq 1$.

We will show that Theorem \ref{thm:main} is valid on convex metric spaces. Let $C$ be $W$-convex with finite diameter $\mathop{\rm diam}(C)\leq\kappa$, and $T:C\to C$ an $\lip$-Lipschitz map with $\lip\in(0,1]$. Given an initial point $x_0\in C$ and a (possibly) nonconstant sequence $\alpha_m\in(0,1]$ with $\alpha_0=1$, we consider the iteration
\begin{equation}\label{kmconv}
(\forall m\in\N)\qquad x_{m+1}=(1-\alpha_{m+1})\,x_{m}\oplus\alpha_{m+1}\,Tx_{m},
\end{equation}

 The following adapts  \citet[Proposition 4.4]{foglia_colao_2025_cat0} to convex metric spaces and nonconstant $\alpha_m$'s. We denote
 $\pi_i^m=\alpha_i\prod_{k=i+1}^m(1-\alpha_k)$ for $0\leq i\le m$, and use the convention $Tx_{-1}=x_0$.
  
\begin{proposition}\label{P19}
For all $0\le m\le n$ we have $d(x_m,x_n)\leq\sum_{i=0}^m\sum_{j=m+1}^n\pi_i^m\pi_j^n\,d(Tx_{i-1},Tx_{j-1})$.
\end{proposition}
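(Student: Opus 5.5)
The plan is to mimic the normed-space computation of \S\ref{sec:recursive}, replacing the algebraic rewriting of $x_m-x_n$ as a combination of differences by a coupling argument based on the convexity inequality \eqref{conv}. The natural tool is induction on $n$ (and, for fixed $n$, on $m$), using that each iterate is a nested geodesic average of the points $Tx_{-1}=x_0,Tx_0,\dots,Tx_{m-1}$ with weights $\pi^m_i$.

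\textbf{Step 1: a one-point inequality.} I would first show by induction on $m$ that for every $z\in X$,
\begin{equation*}
d(x_m,z)\le \sum_{i=0}^m \pi_i^m\, d(Tx_{i-1},z).
\end{equation*}
For $m=0$ it is trivial since $\pi^0_0=1$ (recall $\alpha_0=1$). For the inductive step, \eqref{conv} gives $d(x_{m+1},z)\le(1-\alpha_{m+1})d(x_m,z)+\alpha_{m+1}d(Tx_m,z)$. Plugging in the inductive hypothesis and using $\pi_i^{m+1}=(1-\alpha_{m+1})\pi_i^m$ for $i\le m$ and $\pi^{m+1}_{m+1}=\alpha_{m+1}$ closes the induction.

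\textbf{Step 2: the two-point bound.} Fix $m<n$. For $k\ge m$, I would prove the claim with $x_n$ replaced by $x_k$, by induction on $k$, as
\begin{equation*}
d(x_m,x_k)\le \sum_{i=0}^m\sum_{j=m+1}^k \pi_i^m\pi_j^k\, d(Tx_{i-1},Tx_{j-1}).
\end{equation*}
The base case $k=m$ reads $0\le 0$. For the step, \eqref{conv} with $z=x_m$ gives
\begin{equation*}
d(x_{k+1},x_m)\le(1-\alpha_{k+1})\,d(x_k,x_m)+\alpha_{k+1}\,d(Tx_k,x_m).
\end{equation*}
Bound the first term by the inductive hypothesis. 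Bound the second by Step 1 with $z=Tx_k$, which gives $\sum_i\pi_i^m d(Tx_{i-1},Tx_k)$. Since $\sum_{j=m+1}^{k}\pi_j^{k}\cdot(1-\alpha_{k+1})=\sum_{j=m+1}^{k}\pi_j^{k+1}$ and the new term carries weight $\alpha_{k+1}=\pi^{k+1}_{k+1}$ at index $j=k+1$, the two pieces combine into the double sum with $k+1$ in place of $k$.

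\textbf{Main obstacle.} The main difficulty is conceptual rather than computational. In a normed space one writes $x_m-x_n=\sum_{i\le m}\sum_{j>m}\pi^m_i\pi^n_j(Tx_{i-1}-Tx_{j-1})$, which relies on the identity $\pi_i^m-\pi_i^n=\pi_i^m\sum_{j>m}\pi_j^n$. Here no such linear identity is available, so the cancellation of the common "diagonal" mass must be obtained implicitly. The induction on $k$ achieves this: the contribution of $x_m$ to itself is never expanded, and it is absorbed automatically by the factor $(1-\alpha_{k+1})$. I expect the only delicate point to be the weight bookkeeping in the combining step, namely checking that $\pi_j^{k+1}=(1-\alpha_{k+1})\pi_j^k$ for $j\le k$, which follows directly from the product formula for $\pi_i^m$.

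Finally, combining the bound with the Lipschitz property and the bound $d(x_0,Tx_{j-1})\le\kappa$ reproduces recursion \eqref{eq:recursive}. This gives $d(x_m,x_n)\le\kappa\,\cm_{m,n}$ exactly as before.
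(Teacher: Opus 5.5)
Your proof is correct and follows essentially the paper's argument. It uses the same two ingredients: the convexity inequality \eqref{conv} applied in the outer index with $z=x_m$, and the one-point bound $d(x_m,z)\le\sum_{i=0}^m\pi_i^m\,d(Tx_{i-1},z)$ proved by induction on $m$. The only difference is that the paper first establishes $d(x_n,x_m)\le\sum_{j=m+1}^n\pi_j^n\,d(Tx_{j-1},x_m)$ and substitutes the one-point bound with $z=Tx_{j-1}$ at the end, whereas you make that substitution inside each induction step.
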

\begin{proof}
Let us first show inductively that for fixed $m$ we have 
\begin{equation}\label{part}
(\forall n\geq m)\qquad d(x_n,x_m)\leq\sum_{j=m+1}^n\pi_j^n\,d(Tx_{j-1},x_m).
\end{equation}
This is clear for $n=m$ since both sides are 0. Using  \eqref{conv} and \eqref{kmconv},  an induction step then gives 
\begin{align*}
    d(x_{n+1},x_m)&\leq (1-\alpha_{n+1})\,d(x_n,x_m)+\alpha_{n+1}\,d(Tx_n,x_m)\\
    &\leq \mbox{$ (1-\alpha_{n+1})\sum_{j=m+1}^n\pi_j^n\, d(Tx_{j-1},x_m)+\alpha_{n+1}\,d(Tx_n,x_m)$}\\
    &=\mbox{$\sum_{j=m+1}^{n+1}\pi_j^{n+1} d(Tx_{j-1},x_m)$}.
\end{align*}
Analogously, an induction in $m$ shows that for fixed $z\in X$ one has
$$d(z,x_m)\leq\sum_{i=0}^m\pi_i^m\,d(z,Tx_{i-1}).$$
Applying this with $z=Tx_{j-1}$ and plugging it  into \eqref{part} yields the conclusion.
\end{proof}

We may now derive the announced extension of Theorem \ref{thm:main} to convex metric spaces.

\begin{corollary}
Consider the iteration \eqref{kmconv}
with $\alpha_m\equiv\alpha$ for all $m\geq 1$.
Let $\cm_{m,n}$ and $\bm_m$  be defined recursively as in \emph{\S\ref{sec:recursive}}. Then, for all $0\le m\le n$ we have
$d(x_m,x_n)\leq\kappa\,\cm_{m,n}$ 
and $d(x_m,Tx_m)\leq \kappa\,\bm_m$.
\end{corollary}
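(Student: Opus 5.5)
The plan is to reduce everything to Proposition~\ref{P19}, using an induction on $n$ that copies the normed-space derivation of the recursion \eqref{eq:recursive} in \S\ref{sec:recursive}. With $\alpha_0=1$ and $\alpha_m\equiv\alpha$ for $m\geq1$, the weights $\pi_i^m=\alpha_i\prod_{k=i+1}^m(1-\alpha_k)$ become $\pi_0^m=(1-\alpha)^m$ and $\pi_i^m=\alpha(1-\alpha)^{m-i}$ for $1\le i\le m$. These are exactly the weights used to define $\cm_{m,n}$, and the iteration \eqref{kmconv} coincides with \eqref{eq:km_a} written with geodesic averages. Note also that $x_0=W(x_{-1},Tx_{-1},1)$ is consistent with the convention $Tx_{-1}=x_0$.

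I would argue by strong induction on $n$, assuming $d(x_i,x_j)\le\kappa\,\cm_{i,j}$ for all $0\le i\le j<n$. The case $m=n$ is trivial because $\cm_{n,n}=0$. For $m<n$, Proposition~\ref{P19} gives
$$d(x_m,x_n)\le\sum_{i=0}^m\sum_{j=m+1}^n\pi_i^m\pi_j^n\,d(Tx_{i-1},Tx_{j-1}).$$
I would split this sum according to $i$.

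For $i=0$, we have $Tx_{-1}=x_0$ and $Tx_{j-1}\in C$, so $d(x_0,Tx_{j-1})\le\mathop{\rm diam}(C)\le\kappa$. The $i=0$ terms are therefore bounded by $\kappa\,\pi_0^m\sum_{j=m+1}^n\pi_j^n$. Since $\sum_{j=0}^n\pi_j^n=1$ and $\pi_j^m-\pi_j^n=\pi_j^m\sum_{k=m+1}^n\pi_k^n$ for $j\le m$, this quantity equals $\kappa(\pi_0^m-\pi_0^n)$. Both of these facts are purely algebraic and were already used in \S\ref{sec:recursive}.

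For $i\ge1$, the Lipschitz property gives $d(Tx_{i-1},Tx_{j-1})\le\lip\,d(x_{i-1},x_{j-1})$. Here $i-1<j-1\le n-1$, so the induction hypothesis applies and yields $\lip\,\kappa\,\cm_{i-1,j-1}$. Summing the two contributions reproduces exactly the right-hand side of \eqref{eq:recursive} multiplied by $\kappa$, which gives $d(x_m,x_n)\le\kappa\,\cm_{m,n}$.

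For the residual I need a substitute for the normed identity $\|x_m-Tx_m\|=\|x_m-x_{m+1}\|/\alpha$. This comes from the equality case of \eqref{W1} noted in the appendix: since $x_{m+1}=W(x_m,Tx_m,\alpha)$, we have $d(x_{m+1},x_m)=\alpha\,d(Tx_m,x_m)$. Hence $d(x_m,Tx_m)=d(x_m,x_{m+1})/\alpha\le\kappa\,\cm_{m,m+1}/\alpha=\kappa\,\bm_m$.

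The only step needing care is this use of equality in \eqref{W1}, together with making sure the indexing convention $\alpha_0=1$ produces precisely the weights $\pi_i^m$ of \S\ref{sec:recursive}. Both are routine checks, so I do not expect a real obstacle.
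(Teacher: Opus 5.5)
Your proof is correct and follows essentially the same route as the paper. You rebuild the recursion \eqref{eq:recursive} from the inequality of Proposition~\ref{P19}, using the diameter bound for the $i=0$ terms and the Lipschitz property plus induction for $i\ge 1$, and then you use the equality case of \eqref{W1} to get $d(x_m,x_{m+1})=\alpha\,d(x_m,Tx_m)$; your checks that $\alpha_0=1$ gives the weights of \S\ref{sec:recursive} and that $\pi_0^m\sum_{j=m+1}^n\pi_j^n=\pi_0^m-\pi_0^n$ simply make explicit what the paper leaves implicit.
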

\begin{proof} 
The  estimate $d(x_m,x_n)\leq\kappa\,\cm_{m,n}$  follows recursively exactly as in \S\ref{sec:recursive} by using the inequality in Proposition \ref{P19}, with the coarse bound $d(x_0,Tx_{j-1})\leq\kappa$ for the terms with $i=0$, and  the Lipschitz inequality $d(Tx_{i-1},Tx_{j-1})\leq\lip\,d(x_{i-1},x_{j-1})$ for the remaining terms.

Next, since \eqref{W1} holds with equality, by taking $x=x_m$, $y = T x_m$ and $t =\alpha$, it follows that
$d(x_m,x_{m+1})=\alpha\,d(x_m,Tx_m)$,
and therefore $d(x_m,Tx_m)\leq\kappa\,\cm_{m,m+1}/\alpha=\kappa\,\bm_m$.
\end{proof}

\section{Comparison with tight error bounds}\label{App:Compare_dmn}

In this section we compare the $c_{m,n}$'s with the tight optimal transport bounds $d_{m,n}$ in \S\ref{Sec:optimal_transport}. This extends the result for $\lip=1$ in \cite[Proposition 3.1]{bc18}. We keep $\lip\in (0,1]$ but consider $\alpha\in[\frac12,1]$ in which case the optimal transports 
are given by \eqref{trasporte_optimo}. Hence, $d_{m,m}=0$ and $d_{0,n}=1-\pi_0^n$, while for $n>m\geq 1$ we get
\begin{equation}\label{eq:dmn}
d_{m,n} = \pi_0^m\!-\pi_0^n+\sum_{i=1}^{m-1}\lip(\pi_i^m\!-\pi_i^n)d_{i-1,n-1}+\!\!\!\sum_{j=m+1}^{n-1}\!\!\!\lip\pi_j^nd_{m-1,j-1}+\mbox{$\lip(\pi_m^m-\sum_{j=m}^{n-1}\pi_j^n)$}d_{m-1,n-1}.
\end{equation}
Similarly to the $\cm_{m,n}$'s, this recursion can be interpreted as absorption probabilities for an alternative Markov chain where $\st,\ts$ are absorbing with 
    $Q(\st; \st)=Q(\ts; \ts)=1$, and the transitions from a state $(m,n)\in\D$ 
    are now $Q((m,m);\st)=1$ if $n=m$, while for $n>m$ we have
\begin{equation*}\label{Eq:probQ}
\begin{cases}
   Q((m,n); (i-1,n-1))=\lip \,(\pi_{i}^m -\pi_{i}^n)&  \text{for }~~i=1,\ldots,m-1\\
   Q((m,n); (m-1,j-1))=\lip \,\pi_{j}^n&  \text{for }~~j=m+1,\ldots,n-1\\
     Q((m,n); (m-1,n-1))=\lip \,(\pi_m^m-
   \sum_{j=m}^{n-1}\pi_j^n)& \\
      Q((m,n); \ts)=\pi_{0}^m -\pi_{0}^n&\\
   Q((m,n);\st)=\mbox{$1-(\pi_{0}^m -\pi_{0}^n)-\lip\sum_{i=1}^{m}(\pi_i^m-\pi_{i}^n)$}.&
\end{cases}
\end{equation*}
All formulas involving $m-1$ are for $m \ge 1$, while $m = 0$ is a
separate boundary case.
In fact, for $m=0$ the first 3 cases above are void so that
$Q((0,n); \st)=\pi_{0}^n$ and $Q((0,n); \ts)=1-\pi_0^n$. Note that the latter
 coincide with the transitions of the $P$-chain \eqref{Eq:probP} in Section \ref{Sec:2.2}. Similarly to the $P$-chain one can check that all the $Q$ entries are nonnegative and every row adds up to 1. Also, for $n=m$ we have $Q((m,m);\st)=1=P((m,m);\st)$.

Now, in every step, both chains  jump to a new transient state in $\D$ with strictly smaller coordinates, or are absorbed at $\st$ or $\ts$. Hence,
starting from $(m,n)$ absorption occurs after at most $m+1$ steps.
Denoting $e=(e_s)_{s\in \mathcal{S}}$  with $e_{\ts }=1$ and $e_s=0$ for $s\ne\ts$, and setting $c^{(k)}=P^ke$ and $d^{(k)}=Q^ke$,
we have that  $c^{(k)}_{m,n}$ and $d^{(k)}_{m,n}$ are the probabilities that starting from $(m,n)$ the chains are absorbed at $\ts$ by time $k\in\N$, and therefore
$\cm_{m,n}=c^{(m+1)}_{m,n}$ and $d_{m,n}=d^{(m+1)}_{m,n}$\!.
In order to estimate $\cm_{m,n}-d_{m,n}$, we first establish two technical lemmas.
For simplicity hereafter we denote $\beta\triangleq 1-\alpha$.
\begin{lemma}
\label{lem:increasing-differences1}
If $\alpha\in[\frac12,1]$ then $0\leq d^{(k)}_{h,l}-d^{(k)}_{h,j} \leq d^{(k)}_{i,l}-d^{(k)}_{i,j}$ for all $0\le h\le i\le j\le l$ and  $k\in\N$.
\end{lemma}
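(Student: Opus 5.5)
We argue by induction on $k$, using $d^{(k+1)}=Qd^{(k)}$, i.e.
\[
d^{(k+1)}_{m,n}=\pi_0^m-\pi_0^n+\sum_{i=1}^{m-1}\lip(\pi_i^m-\pi_i^n)d^{(k)}_{i-1,n-1}+\sum_{j=m+1}^{n-1}\lip\pi_j^nd^{(k)}_{m-1,j-1}+\lip\Big(\pi_m^m-\sum_{j=m}^{n-1}\pi_j^n\Big)d^{(k)}_{m-1,n-1}
\]
for $n>m$, with $d^{(k+1)}_{m,m}=0$ and $d^{(k)}_{0,n}=1-\pi_0^n$ for $k\ge1$. For $k=0$ the vector $e$ vanishes on $\D$, so all differences are $0$ and the claim is trivial. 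The claim consists of two parts. The first is monotonicity: $d^{(k)}_{h,\cdot}$ is nondecreasing in the second index. The second is the increasing-differences (quadrangle) property: $d^{(k)}_{h,l}+d^{(k)}_{i,j}\le d^{(k)}_{h,j}+d^{(k)}_{i,l}$ for $h\le i\le j\le l$. By telescoping, it suffices to prove both for adjacent indices. We need $d^{(k)}_{h,j+1}\ge d^{(k)}_{h,j}$ for $h\le j$, and
\[
d^{(k)}_{h,j+1}-d^{(k)}_{h,j}\le d^{(k)}_{h+1,j+1}-d^{(k)}_{h+1,j}\qquad (h+1\le j).
\]
The adjacent quadrangle with $i=j$ must also be covered, i.e.\ $d^{(k)}_{h,j+1}-d^{(k)}_{h,j}\le d^{(k)}_{j,j+1}$. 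We extend $d$ symmetrically, or treat this as a boundary case of the same computation using $d_{j,j}=0$.

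For the inductive step we compute $d^{(k+1)}_{m,n+1}-d^{(k+1)}_{m,n}$ explicitly from the recursion. Using $\pi_i^{n+1}=\beta\pi_i^n$ for $i\le n$ and $\pi_{n+1}^{n+1}=\alpha$, we reorganize the difference as a nonnegative combination of three kinds of terms. The first is $\beta^n\alpha=\pi_0^n-\pi_0^{n+1}$. The second is $\lip\,\alpha\pi_i^n$-weighted terms $d^{(k)}_{i-1,n-1}$ paired against $\lip$-weighted differences $d^{(k)}_{i-1,n}-d^{(k)}_{i-1,n-1}$, which are nonnegative by the monotonicity hypothesis. The third is the flows to the last destination, which change from $n$ to $n+1$. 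The hypothesis $\alpha\ge\frac12$ enters exactly here. It guarantees that the coefficient $z_{m,n}=(2\alpha-1)+\beta^{n+1-m}$ of the last term is nonnegative. It also ensures that when we pass from $n$ to $n+1$, the mass shifting from destination $n$ (now served by source $m$ at cost $d_{m-1,n-1}$) to $n+1$ produces terms of the form $d^{(k)}_{\cdot,n}-d^{(k)}_{\cdot,n-1}\ge0$ with nonnegative weights. This gives monotonicity at level $k+1$.

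For the quadrangle inequality we subtract the analogous expansion for row $m+1$ from that for row $m$. The goal is to write
\[
\big(d^{(k+1)}_{m+1,n+1}-d^{(k+1)}_{m+1,n}\big)-\big(d^{(k+1)}_{m,n+1}-d^{(k+1)}_{m,n}\big)
\]
as a nonnegative combination of level-$k$ quantities. These quantities are monotonicity increments and adjacent quadrangle expressions $\big(d^{(k)}_{i+1,l+1}-d^{(k)}_{i+1,l}\big)-\big(d^{(k)}_{i,l+1}-d^{(k)}_{i,l}\big)$, together with the explicit nonnegative contributions from the $\pi_0$ and boundary terms ($\pi_0^m-\pi_0^n$ differences are geometric and easily compared). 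The key structural fact is that, for $\alpha\ge\frac12$, the nested optimal transport \eqref{trasporte_optimo} routes all excess to the single last destination $n$. The transport plans for $(m,n)$, $(m,n+1)$, $(m+1,n)$ and $(m+1,n+1)$ therefore differ only in a few explicit entries: the diagonal entries at $m,m+1$ and the last column. A coupling of these four plans matches most terms exactly, and the leftover terms are of the quadrangle or monotone type.

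The main obstacle is this bookkeeping in the quadrangle step, namely checking that every residual coefficient is nonnegative. I would first try the case $n=m+1$ (contiguous), together with the boundary cases $m=0$ (where $d^{(k)}_{0,n}=1-\beta^n$ is explicit and concave in $n$) and $i=j$. From these I would extract the sign pattern, which I expect to reduce to inequalities like $\beta^{n+1-m}\le 2\alpha-1+\beta^{n+1-m}$ and $\beta\le\alpha$. I would then verify that the general case produces only those same coefficient types.
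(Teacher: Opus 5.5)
Your framework is the paper's: induction on $k$ through $d^{(k+1)}=Qd^{(k)}$, and telescoping to the claim that $\Delta^{(k)}_{m,n}\triangleq d^{(k)}_{m,n+1}-d^{(k)}_{m,n}$ is nonnegative and nondecreasing in $m$ on $\{m\le n\}$. Your direct monotonicity step is sound: the two contributions in $\Delta^{(k)}_{m-1,n-1}$ combine into the coefficient $\lip(2\alpha-1)(1-\beta^{n-m})$. It is also redundant, since $\Delta^{(k+1)}_{0,n}=\alpha\beta^n\ge 0$ together with monotonicity in $m$ already gives it, which is how the paper argues.

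The genuine gap is that the quadrangle step, which is the whole content of the lemma, is not carried out, and the heuristic you give for it is inaccurate. The plans do not differ in only a few entries. Passing from $n$ to $n+1$ rescales every $\pi_j^n$ by $\beta$. Passing from $m-1$ to $m$ rescales every source mass ($\pi_i^m=\beta\pi_i^{m-1}$), hence every last-column flow $\pi_i^m-\pi_i^n$, and moves the flows to destinations $j\in\{m+1,\dots,n-1\}$ from source $m-1$ to source $m$. Subtracting the rows therefore produces genuinely negative terms: $-\lip\alpha\sum_{i=1}^{m-1}\pi_i^{m-1}\Delta^{(k)}_{i-1,n-1}$, $-\lip\alpha\pi_m^n\big(d^{(k)}_{m-2,n-1}-d^{(k)}_{m-2,m-1}\big)$ and $-\lip\beta(1-\beta^{n-m})\Delta^{(k)}_{m-1,n-1}$. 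These are of quadrangle or monotone type only after being combined with positive terms and regrouped around $\Delta^{(k)}_{m-2,n-1}$. The resulting coefficient of $\Delta^{(k)}_{m-2,n-1}$ is nonnegative by an exact cancellation, not by an inequality such as $\beta\le\alpha$, so your predicted sign pattern would not find it.

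What is needed, and what the paper verifies, is the following identity for $2\le m\le n$, with $d=d^{(k)}$ and $\Delta=\Delta^{(k)}$ on the right:
\begin{align*}
\Delta^{(k+1)}_{m,n}-\Delta^{(k+1)}_{m-1,n}
={}&\lip\alpha\sum_{j=m+1}^{n-1}\pi_j^n\big[(d_{m-1,n-1}-d_{m-1,j-1})-(d_{m-2,n-1}-d_{m-2,j-1})\big]\\
&+\lip\alpha\sum_{i=1}^{m-1}\pi_i^{m-1}\big[\Delta_{m-2,n-1}-\Delta_{i-1,n-1}\big]\\
&+\lip(2\alpha-1)(1-\beta^{n-m})\big[\Delta_{m-1,n-1}-\Delta_{m-2,n-1}\big]+\lip\alpha\beta^{m-1}\,\Delta_{m-2,n-1}\\
&+\lip\alpha^2\beta^{n-m}\big[(d_{m-1,n}-d_{m-1,m-1})-(d_{m-2,n}-d_{m-2,m-1})\big].
\end{align*}
Every bracket is nonnegative by the level-$k$ hypothesis. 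The last bracket is the case $i=j$, handled via $d_{m-1,m-1}=0$, so no symmetric extension is needed. The hypothesis $\alpha\ge\frac12$ enters only through $(2\alpha-1)(1-\beta^{n-m})$. The coefficient $\lip\alpha\beta^{m-1}=\lip(\pi_0^{m-1}-\pi_0^m)$ is exactly what survives the cancellation described above.

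The case $m=1$ must be treated separately, since row $m-2$ does not exist. There the difference is a nonnegative combination of $d_{0,n-1}-d_{0,j-1}$, $d_{0,n}$ and $\Delta_{0,n-1}$. Until you produce such a decomposition, the proposal is a plan rather than a proof.
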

\begin{proof} 
Denoting $\Delta^{\!(k)}_{m,n}\triangleq d^{(k)}_{m,n+1}\!-d^{(k)}_{m,n}$, by telescoping we get $d^{(k)}_{h,l}-d^{(k)}_{h,j}=\sum_{u=j}^{l-1}\Delta^{\!(k)}_{h,u}$ so it suffices to show that 
for all $k\in\N$ the differences $\Delta^{\!(k)}_{m,n}$ are nonnegative and 
increase with $m$ for $m\le n$. 

For $k=0$ this holds trivially since $d^{(0)}=e$ 
so that $\Delta^{\!(0)}_{m,n}=e_{m,n+1}-e_{m,n}\equiv 0$. 
Inductively, suppose that the property holds for a given $k$ and let us prove it for $k+1$. 
Since $\st$ and $\ts$ are absorbing, we have  $d^{(k)}_{\st}=d^{(k-1)}_{\st}=\ldots=d^{(0)}_{\st}=e_{\st}=0$
    and $d^{(k)}_{\ts}=d^{(k-1)}_{\ts}=\ldots=d^{(0)}_{\ts}=e_{\ts}=1$,  hence
    $$d^{(k+1)}_{0,n}=(Qd^{(k)})_{0,n}=\pi_0^nd^{(k)}_{\st}+(1-\pi_{0}^n)d^{(k)}_{\ts}=1-\pi_{0}^n,$$ 
and therefore
$\Delta^{\!(k+1)}_{0,n}=\pi_0^n-\pi_0^{n+1}\geq 0$. 
It remains to show that $\Delta^{\!(k+1)}_{m,n}\geq \Delta^{\!(k+1)}_{m-1,n}$ for $1\leq m\le n$. 
    
From $d^{(k+1)}=Qd^{(k)}$ we have
\begin{align}\label{ah}
d^{(k+1)}_{m,n}&=\mbox{$(\pi_{0}^m -\pi_{0}^n)+\lip\sum_{j=m+1}^{n-1}\pi_{j}^n\big[d^{(k)}_{m-1,j-1}\!-d^{(k)}_{m-1,n-1}\big]+ \lip\sum_{i=1}^{m}(\pi_{i}^m -\pi_{i}^n)\,d^{(k)}_{i-1,n-1}$},
\end{align}
from which it follows that
\begin{align*}
\Delta^{(k+1)}_{m,n}=&~\mbox{$(\pi_{0}^n -\pi_{0}^{n+1})+\lip\sum_{j=m+1}^{n}(\pi_j^n-\pi_{j}^{n+1})\big[d^{(k)}_{m-1,n-1}\!-d^{(k)}_{m-1,j-1}\big]+ \lip\sum_{i=1}^{m}(\pi_i^n-\pi_{i}^{n+1})\,d^{(k)}_{i-1,n}$}\\
&+\mbox{$\lip\sum_{j=m+1}^{n}\pi_{j}^{n+1}\big[d^{(k)}_{m-1,n-1}\!-d^{(k)}_{m-1,n}\big]+ \lip\sum_{i=1}^{m}(\pi_{i}^m -\pi_{i}^n)\big[d^{(k)}_{i-1,n}-d^{(k)}_{i-1,n-1}\big].$}
\end{align*}
In particular, for $m=1$ we get
\begin{align*}
\Delta^{\!(k+1)}_{1,n}-\Delta^{\!(k+1)}_{0,n}=&~\mbox{$\lip\sum_{j=2}^{n}(\pi_{j}^n-\pi_j^{n+1})\big[d^{(k)}_{0,n-1}\!-d^{(k)}_{0,j-1}\big]+\lip\,(\pi_{1}^n -\pi_{1}^{n+1})\,d^{(k)}_{0,n}$}\\
&+\mbox{$\lip\,(\pi_{1}^1 -\sum_{j=1}^n\pi_j^{n+1})\big[d^{(k)}_{0,n}-d^{(k)}_{0,n-1}\big]$},
\end{align*}
with all the terms nonnegative
since the induction hypothesis gives $d^{(k)}_{0,j-1}\le d^{(k)}_{0,n-1}\le d^{(k)}_{0,n}$, and we also have $(\pi_j^n-\pi_{j}^{n+1})\ge 0$ and $(\pi_1^1-\sum_{j=1}^n\pi_j^{n+1})= 2\alpha-1+\beta^{n+1}\geq 0$. Therefore  
$\Delta^{\!(k+1)}_{1,n}\ge \Delta^{\!(k+1)}_{0,n}$.

Consider now the case $2\le m \le n$. 
A routine calculation yields 
\begin{align*}
\Delta^{(k+1)}_{m,n}-\Delta^{(k+1)}_{m-1,n}=
&~\mbox{$\lip\sum_{j=m+1}^{n-1}(\pi_j^n-\pi_{j}^{n+1})\big[(d^{(k)}_{m-1,n-1}\!-d^{(k)}_{m-1,j-1})-(d^{(k)}_{m-2,n-1}\!-d^{(k)}_{m-2,j-1})\big]$}\\
&+\mbox{$\lip\sum_{i=1}^{m-1}(\pi_{i}^{m-1} -\pi_{i}^{m})\big[(d^{(k)}_{m-2,n}\!-d^{(k)}_{m-2,n-1})-(d^{(k)}_{i-1,n}\!-d^{(k)}_{i-1,n-1})\big]$}\\
&+\mbox{$\lip\big(\pi_m^m-\pi_{m}^n-\sum_{j=m+1}^{n}\pi_{j}^{n+1}\big)\big[(d^{(k)}_{m-1,n}\!-d^{(k)}_{m-1,n-1})-(d^{(k)}_{m-2,n}\!-d^{(k)}_{m-2,n-1})\big]$}\\
&+\mbox{$\lip(\pi_0^{m-1}-\pi_0^m)\big[d^{(k)}_{m-2,n}\!-d^{(k)}_{m-2,n-1}\big]$}\\
&+\mbox{$\lip(\pi_m^n-\pi_{m}^{n+1})\big[(d^{(k)}_{m-1,n}\!-d^{(k)}_{m-1,m-1})-(d^{(k)}_{m-2,n}\!-d^{(k)}_{m-2,m-1})\big]$}
\end{align*}
where in the last line we added the term $d^{(k)}_{m-1,m-1}\equiv 0$, an identity that follows since diagonal states are absorbed at $\st$.
From the induction hypothesis all the expressions in square brackets are nonnegative, as well as the corresponding coefficients since $(\pi_j^n-\pi_{j}^{n+1})\geq 0$ and $(\pi_{i}^{m-1}- \pi_{i}^{m})\geq 0$, while
$\alpha\geq\frac12$ implies 
\begin{align*}
   \mbox{$\big(\pi_{m}^{m}-\pi_{m}^n-\sum_{j=m+1}^{n}\pi_{j}^{n+1}\big)$}&=(2\alpha-1)(1-\beta^{n-m})\geq 0.
\end{align*}
Hence $\Delta^{\!(k+1)}_{m,n}\geq \Delta^{\!(k+1)}_{m-1,n}$ which completes the proof.
\end{proof}

\begin{lemma}
\label{lem:local}
Suppose that $\alpha\in[\frac12,1]$ and define $h^{(k)}\triangleq (P-Q)d^{(k)}$.
Then, for all $k\in\N$ and $(m,n)\in\D$ we have $d_{m,n}^{(k)}\leq \lipalpha^{\hspace{0.3ex}m}$
and $0\le h^{(k)}_{m,n}\leq (1-\alpha)^2\lipalpha^{\hspace{0.3ex}m}$.
\end{lemma}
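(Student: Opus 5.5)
The plan is to prove both estimates by induction on $k$, working directly from the rows of $P$ and $Q$ and using Lemma~\ref{lem:increasing-differences1} for the sign of $h^{(k)}$.

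\emph{Bound $d^{(k)}_{m,n}\le\lipalpha^{\hspace{0.3ex}m}$.} For $k=0$ we have $d^{(0)}=e$, which vanishes on $\D$. Assume the bound holds for $k$.
\begin{itemize}
\item Under $Q$, the row of $(m,n)$ sends mass $\pi_0^m-\pi_0^n$ to $\ts$.
\item It sends the transient mass $\lip(\pi_i^m-\pi_i^n)$ to states whose first coordinate is $i-1$, for $1\le i\le m$. For $i=m$ this mass is split among the states $(m-1,j-1)$ and $(m-1,n-1)$, but its total is still $\lip(\pi_m^m-\pi_m^n)$.
\item Absorption at $\st$ contributes $0$.
\end{itemize}
Using the induction hypothesis on each target state and $d^{(k)}\ge 0$ (so the $\pi^n$ terms can be dropped), we get
\[
d^{(k+1)}_{m,n}\le \pi_0^m+\lip\sum_{i=1}^m\pi_i^m\lipalpha^{\hspace{0.3ex}i-1}
=(1-\alpha)^m+\alpha\lip\sum_{i=1}^m(1-\alpha)^{m-i}\lipalpha^{\hspace{0.3ex}i-1}.
\]
Since $\alpha\lip=\lipalpha-(1-\alpha)$, the geometric sum telescopes to $\lipalpha^{\hspace{0.3ex}m}-(1-\alpha)^m$. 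This gives exactly $\lipalpha^{\hspace{0.3ex}m}$ and closes the induction.

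\emph{Writing out $h^{(k)}_{m,n}$.} Both chains send the same mass to $\ts$, and the same total mass $\lip(\pi_i^m-\pi_i^n)$ out of each column $i-1$. So $h^{(k)}_{m,n}$ is a pure redistribution inside each column, and only the targets differ.
\begin{itemize}
\item For $1\le i\le m-1$, $P$ sends $\lip\pi_i^m\pi_j^n$ to $(i-1,j-1)$ for $j=m+1,\dots,n$. $Q$ sends the whole mass of that column to $(i-1,n-1)$.
\item For $i=m$, $P$ sends $\lip\pi_m^m\pi_j^n=\lip\alpha\pi_j^n$ to $(m-1,j-1)$. $Q$ sends $\lip\pi_j^n$ to $(m-1,j-1)$ for $m<j<n$, and the remainder to $(m-1,n-1)$.
\end{itemize}
Collecting terms, the $j=n$ terms cancel because they compare $d_{\cdot,n-1}$ with itself. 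Since $\sum_{i=0}^{m-1}\pi_i^m=1-\alpha$, and the $i=0$ transitions coincide in both chains, I expect the identity
\[
h^{(k)}_{m,n}=\lip\sum_{j=m+1}^{n-1}\pi_j^n\sum_{i=1}^{m-1}\pi_i^m\Big[\big(d^{(k)}_{m-1,n-1}-d^{(k)}_{m-1,j-1}\big)-\big(d^{(k)}_{i-1,n-1}-d^{(k)}_{i-1,j-1}\big)\Big]+R,
\]
where $R$ is a leftover coming from the $i=0$ share of $1-\alpha$. $R$ should also have the form $\pi_0^m\,(d^{(k)}_{m-1,n-1}-d^{(k)}_{m-1,j-1})\ge 0$, possibly up to a further difference of the same type. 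Getting this bookkeeping exactly right, in particular the $i=0$ share and the boundary index $j=m$, is the main obstacle. I would verify it first for $m=1$ and for $n=m+1$.

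\emph{Sign and size of $h^{(k)}_{m,n}$.}
\begin{itemize}
\item Lower bound: each bracket is nonnegative by Lemma~\ref{lem:increasing-differences1} with $h=i-1\le m-1\le j-1\le n-1$. Hence $h^{(k)}_{m,n}\ge 0$.
\item Upper bound: by the same lemma each bracket is at most $d^{(k)}_{m-1,n-1}-d^{(k)}_{m-1,j-1}\le d^{(k)}_{m-1,n-1}$. By the first part this is at most $\lipalpha^{\hspace{0.3ex}m-1}$.
\item The weights satisfy $\sum_{i<m}\pi_i^m\le 1-\alpha$ and $\sum_{j=m+1}^{n-1}\pi_j^n=(1-\alpha)\big(1-(1-\alpha)^{n-m-1}\big)\le 1-\alpha$.
\end{itemize}
Together these give $h^{(k)}_{m,n}\le(1-\alpha)^2\lip\,\lipalpha^{\hspace{0.3ex}m-1}\le(1-\alpha)^2\lipalpha^{\hspace{0.3ex}m}$, using $\lip\le\lipalpha$. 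The case $m=0$ is trivial, since the $P$ and $Q$ rows coincide there and $h^{(k)}_{0,n}=0$.
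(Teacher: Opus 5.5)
Your proposal is correct and is essentially the paper's argument. The identity you anticipate holds exactly, with $R=\lip\,\pi_0^m\sum_{j=m+1}^{n-1}\pi_j^n\bigl(d^{(k)}_{m-1,n-1}-d^{(k)}_{m-1,j-1}\bigr)$ and no extra term: in column $m-1$ the net coefficient of $d^{(k)}_{m-1,n-1}$ is $\lip\bigl(\alpha^2-\alpha+\sum_{j=m}^{n-1}\pi_j^n\bigr)=\lip(1-\alpha)\sum_{j=m+1}^{n-1}\pi_j^n$, while that of $d^{(k)}_{m-1,j-1}$ is $-\lip(1-\alpha)\pi_j^n$. This identity is just the paper's decomposition \eqref{eq:local-decomposition} with $1-\alpha$ split as $\pi_0^m+\sum_{i=1}^{m-1}\pi_i^m$, so your bracket argument is the paper's ``negative double sum dominated by the positive single sum'' step. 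The only minor deviation is in the first bound: you apply the induction hypothesis to every target in column $m-1$ rather than first discarding the first sum in \eqref{ah} via Lemma~\ref{lem:increasing-differences1}. This is legitimate because all $Q$-weights, in particular $\lip\bigl(\pi_m^m-\sum_{j=m}^{n-1}\pi_j^n\bigr)$, are nonnegative for $\alpha\ge\frac12$, and it is slightly more direct while giving the same estimate $\lipalpha^{\hspace{0.3ex}m}$.
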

\begin{proof}
We prove  inductively that $d_{m,n}^{(k)}\leq \lipalpha^{\hspace{0.3ex}m}$.
    This holds trivially for $k=0$ since $d_{m,n}^{(0)}=e_{m,n}=0$. Assume $d_{i,j}^{(k)}\leq \lipalpha^i$ for all $(i,j)\in\D$ and let us estimate $d_{m,n}^{(k+1)}$. According to Lemma \ref{lem:increasing-differences1}, we have $d_{m-1,j-1}^{(k)}\leq d_{m-1,n-1}^{(k)}$ for all $j=m+1,\ldots,n$, and then  neglecting the first sum in \eqref{ah} we get
    \begin{align*}
d^{(k+1)}_{m,n}&\le\mbox{$\lip\sum_{i=1}^{m}(\pi_{i}^m -\pi_{i}^n)\,d^{(k)}_{i-1,n-1}+(\pi_{0}^m -\pi_{0}^n)$}.
\end{align*}
    Dropping the negative terms and using the induction hypothesis, a direct calculation gives
     \begin{align*}
        d_{m,n}^{(k+1)}&\le\mbox{$\lip\sum_{i=1}^{m}\pi_{i}^m \lipalpha^{i-1}+\pi_{0}^m=\lipalpha^m$},
    \end{align*} 
    which completes the induction step.

Let us now show that $0\le h^{(k)}_{m,n}\leq (1-\alpha)^2\lipalpha^{\hspace{0.3ex}m}$.

For $m=0$ and $m=n$ the $(m,n)$-th rows of $P$ and $Q$
coincide, and therefore $h^{(k)}_{m,n}=0\le\beta^2\lipalpha^{\hspace{0.2ex}m}$. 
Let us consider the case
 $0< m< n$. The transition probabilities towards $\ts$ and $\st$ are equal in $P$ and $Q$, so only transitions to transient states contribute to $h^{(k)}_{m,n}$. After simplification we get
\begin{equation}
\label{eq:local-decomposition}
\begin{aligned}
  h^{(k)}_{m,n}
  =&\;\sum_{i=0}^{m-2}
      \sum_{j=m}^{n-2}\lip\pi_{i+1}^m\pi_{j+1}^n
      \big(d^{(k)}_{i,j}-d^{(k)}_{i,n-1}\big) +\lip\beta\sum_{j=m}^{n-2}\pi_{j+1}^n
      \big(d^{(k)}_{m-1,n-1}-d^{(k)}_{m-1,j}\big).
\end{aligned}
\end{equation}
Using again Lemma \ref{lem:increasing-differences1}, for $i=0,\ldots,m-2$ and
$j=m,\ldots, n-2$ we have
\[
  0\leq d^{(k)}_{i,n-1}-d^{(k)}_{i,j}
  \leq d^{(k)}_{m-1,n-1}-d^{(k)}_{m-1,j},
\]
while
$\sum_{i=0}^{m-2}\pi_{i+1}^m
  =\beta-\beta^{m}
  \leq\beta$. Therefore, the first double sum in \eqref{eq:local-decomposition} is negative and bounded in absolute value by the second sum, so that
\begin{align*}
 0\leq  h^{(k)}_{m,n}
  &\mbox{$\leq\lip\,\beta\sum_{j=m}^{n-2}\pi_{j+1}^n
      \bigl(d^{(k)}_{m-1,n-1}-d^{(k)}_{m-1,j}\bigr)$}.
\end{align*}
Dropping the negative term $-d^{(k)}_{m-1,j}$ and using the bound $d^{(k)}_{m-1,n-1}\leq \lipalpha^{\hspace{0.2ex}m-1}$,
 we get
\begin{align*}
0\leq h^{(k)}_{m,n}&\leq \mbox{$\lip\,\beta\,\lipalpha^{\hspace{0.2ex}m-1}\sum_{j=m}^{n-2}\pi_{j+1}^{n}$},
\end{align*}
and the conclusion follows by noting that $\lip\leq\lipalpha$ and 
$\sum_{j=m}^{n-2}\pi_{j+1}^n
  =\beta-\beta^{n-m}
  \leq\beta$.
\end{proof}

We may now establish the announced comparison between the $\cm_{m,n}$'s and the tight bounds $d_{m,n}$.
\begin{proposition}\label{cmndmn} For all $\lip\in (0,1]$, $\alpha\in[\frac12,1]$, and $0\le m<n$, we have $$0\le c_{m,n}-d_{m,n}\leq (1-\alpha)^2(m+1)\lipalpha^{\hspace{0.3ex}m}.$$
For $n = m+1$, equation \eqref{eq:dmn} factors as $\alpha[\pi_0^m + \sum_{i=1}^m \pi_i^m \lip d_{i-1,m}] =\alpha\, \Rtight_m$ and therefore
$$\bm_m-\Rtight_m\le (1-\alpha)^2(m+1)\lipalpha^{\hspace{0.3ex}m}/\alpha.$$
\end{proposition}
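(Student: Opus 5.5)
The plan is to compare the two absorbing chains via the standard telescoping identity for powers of stochastic matrices. With $c^{(k)}=P^ke$ and $d^{(k)}=Q^ke$, we have
\[
c^{(k+1)}-d^{(k+1)}=P\big(c^{(k)}-d^{(k)}\big)+(P-Q)d^{(k)}=P\big(c^{(k)}-d^{(k)}\big)+h^{(k)},
\]
and hence, by induction starting from $c^{(0)}=d^{(0)}=e$,
\[
c^{(k)}-d^{(k)}=\sum_{t=0}^{k-1}P^{\,k-1-t}h^{(t)}.
\]
Lemma~\ref{lem:local} gives $h^{(t)}\ge 0$. Since $P$ has nonnegative entries, this yields $c^{(k)}\ge d^{(k)}$ for all $k$. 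Taking $k=m+1$ and using $\cm_{m,n}=c^{(m+1)}_{m,n}$, $d_{m,n}=d^{(m+1)}_{m,n}$ gives the lower bound $0\le \cm_{m,n}-d_{m,n}$.

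For the upper bound, I would evaluate $(P^{s}h^{(t)})_{m,n}$ at the starting state $(m,n)$ with $s=m-t$. The vector $h^{(t)}$ vanishes at the absorbing states $\st,\ts$, because the rows of $P$ and $Q$ coincide there. On transient states $(i,j)$, Lemma~\ref{lem:local} gives $h^{(t)}_{i,j}\le(1-\alpha)^2\lipalpha^{\,i}$. Along any $P$-path starting from $(m,n)$, the first coordinate never increases. So every transient state $(i,j)$ reachable from $(m,n)$ has $i\le m$, hence $\lipalpha^{\,i}\ge\lipalpha^{\,m}$, and this monotonicity points the wrong way.

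This is the main obstacle. The crude estimate $\lipalpha^{\,i}\le 1$ loses the factor $\lipalpha^{\,m}$ that the statement requires. To recover it, I would avoid a uniform bound on $h$ and instead exploit the fact that transitions of $P$ into transient states carry substochastic mass. The total transient mass leaving $(m,n)$ is $\lip\sum_i\pi_{i+1}^m(1-\pi_{i+1}^n\text{-tail})$, and weighting each destination by $\lipalpha^{\,i}$, exactly as in the first part of the proof of Lemma~\ref{lem:local}, gives
\[
\lip\sum_{i=1}^m\pi_i^m\,\lipalpha^{\,i-1}\le\lipalpha^{\,m}-\pi_0^m\le\lipalpha^{\,m}.
\]
By induction on $s$, this shows that the function $w_{i,j}=\lipalpha^{\,i}$ on transient states (and $0$ at the absorbing states) is superharmonic for $P$, namely $Pw\le w$. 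Consequently
\[
(P^{s}h^{(t)})_{m,n}\le(1-\alpha)^2(P^s w)_{m,n}\le(1-\alpha)^2\lipalpha^{\,m}.
\]
Summing over the $m+1$ values $t=0,\dots,m$ then gives $\cm_{m,n}-d_{m,n}\le(1-\alpha)^2(m+1)\lipalpha^{\,m}$. The step I would check most carefully is the inequality $Pw\le w$. It requires summing $\lip\,\pi_{i+1}^m\pi_{j+1}^n$ over $j$, which gives $\lip\,\pi_{i+1}^m(\pi_{i+1}^m-\pi_{i+1}^n)\le\lip\,\pi_{i+1}^m$, and then using the geometric identity $\pi_0^m+\lip\sum_{i\ge1}\pi_i^m\lipalpha^{\,i-1}=\lipalpha^{\,m}$.

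For the second claim I would set $n=m+1$ in \eqref{eq:dmn}. The middle sum is then empty, and the coefficients become $\lip(\pi_i^m-\pi_i^{m+1})=\lip\,\alpha\,\pi_i^m$, $\pi_0^m-\pi_0^{m+1}=\alpha\pi_0^m$, and $\lip(\pi_m^m-\pi_m^{m+1})=\lip\,\alpha\,\pi_m^m$. Hence
\[
d_{m,m+1}=\alpha\Big[\pi_0^m+\sum_{i=1}^m\pi_i^m\lip\, d_{i-1,m}\Big]=\alpha\,\Rtight_m.
\]
Dividing the first estimate by $\alpha$ and recalling $\bm_m=\cm_{m,m+1}/\alpha$ then yields $\bm_m-\Rtight_m\le(1-\alpha)^2(m+1)\lipalpha^{\,m}/\alpha$.
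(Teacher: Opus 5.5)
Your proposal is correct and follows the paper's own route. The paper likewise uses the telescoping identity $c^{(m+1)}-d^{(m+1)}=\sum_{k=0}^{m}P^k h^{(m-k)}$, the bounds $0\le h^{(k)}\le(1-\alpha)^2 w$ from Lemma~\ref{lem:local}, $P$-superharmonicity of $w_{i,j}=\lipalpha^{\,i}$, and the $n=m+1$ specialization of \eqref{eq:dmn}. One small slip: summing over $j$ gives $\lip(\pi_{i+1}^m-\pi_{i+1}^n)$, not $\lip\,\pi_{i+1}^m(\pi_{i+1}^m-\pi_{i+1}^n)$, though the resulting bound $\le\lip\,\pi_{i+1}^m$ is unaffected.
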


\begin{proof}
Set $\beta=1-\alpha$ as before. From Lemma~\ref{lem:local} we have the componentwise inequality $0\le h^{(k)}\leq \beta^2 w$ where 
$w=(w_s)_{s\in S}$ with $w_{m,n}=\lipalpha^{\hspace{0.3ex}m}$ for $(m,n)\in\D$ and $w_{\st}=w_{\ts}=0$. 
We claim that $w$ is $P$-superharmonic, that is $Pw\leq w$. Indeed, since $w_{\st}=w_{\ts}=0$, only transient states contribute to $Pw$ so that $(Pw)_{\st}=0=w_{\st}$, $(Pw)_{\ts}=0=w_{\ts}$, while for $(m,n)\in\D$ we get
\begin{align*}
  (Pw)_{m,n}
  &=\mbox{$\lip\sum_{i=0}^{m-1}\sum_{j=m}^{n-1}
      \pi_{i+1}^m\pi_{j+1}^n\lipalpha^i$}\leq \mbox{$\lip\sum_{i=0}^{m-1}\pi_{i+1}^m\lipalpha^i$} =\lipalpha^{\hspace{0.3ex}m}-\beta^m\leq w_{m,n}.
\end{align*}
Now, by telescoping we have $ P^{m+1}-Q^{m+1}
  =\sum_{k=0}^{m}P^k(P-Q)Q^{m-k}$, which applied to $e$ gives 
$$\mbox{$ c^{(m+1)}-d^{(m+1)}
  =\sum_{k=0}^{m}P^k h^{(m-k)}\le \beta^2\sum_{k=0}^{m}P^k w$}\leq \beta^2(m+1)w$$
and the conclusion follows by considering the $(m,n)$-th coordinate.
\end{proof}

\subsection{Numerical comparison}\label{AppE_Numerical}
The absolute-gap estimate in Proposition \ref{cmndmn} is rather conservative, and does not allow to bound  the relative gap $\bm_m/\Rtight_m$. In Table \ref{relgap} we compute a representative sample of absolute and relative gaps with $m=10$ and $m=100$, for different combinations of $\lip$ and $\alpha$. The values of $\bm_m$ were computed using the formulas in Theorem \ref{Prop6}, while $\Rtight_m=d_{m,m+1}/\alpha$ was obtained from the recursion \eqref{eq:dmn}. We also considered $m=1000$ and $\alpha,\lip\in\{0.5, 0.75, 0.9\}$, observing absolute gaps of the order $10^{-25}$ or smaller, and relative gaps below 3\%. 
As shown in Table \ref{relgapopt}, these relative gaps become even smaller when $\alpha$ is fixed at the value $\alpha_\lip(m)$ that minimizes $\bm_m$.

\begin{table}[ht]
\begin{tabular}{|ccc|lllc|}
\hline
$m$ & $\lip$ & $\alpha$ &\multicolumn{1}{c}{$\bm_m$} & \multicolumn{1}{c}{$\Rtight_m$} & \multicolumn{1}{c}{$\bm_m-\Rtight_m$} & $\bm_m/\Rtight_m$ \\
\hline
10   & 0.50 & 0.50 & $3.23\times10^{-2}$   & $3.15\times10^{-2}$   & $7.96\times10^{-4}$   & 1.025 \\
10   & 0.50 & 0.75 & $5.30\times10^{-3}$   & $5.23\times10^{-3}$   & $7.13\times10^{-5}$   & 1.014 \\
10   & 0.50 & 0.90 & $1.65\times10^{-3}$   & $1.64\times10^{-3}$   & $5.52\times10^{-6}$   & 1.003 \\\hline
10   & 0.75 & 0.50 & $1.14\times10^{-1}$   & $1.08\times10^{-1}$   & $6.04\times10^{-3}$   & 1.058 \\
10   & 0.75 & 0.75 & $5.83\times10^{-2}$   & $5.68\times10^{-2}$   & $1.46\times10^{-3}$   & 1.026 \\
10   & 0.75 & 0.90 & $4.53\times10^{-2}$   & $4.51\times10^{-2}$   & $1.89\times10^{-4}$   & 1.004 \\\hline
10   & 0.90 & 0.50 & $2.23\times10^{-1}$   & $2.07\times10^{-1}$   & $1.59\times10^{-2}$   & 1.077 \\
10   & 0.90 & 0.75 & $1.90\times10^{-1}$   & $1.84\times10^{-1}$   & $5.96\times10^{-3}$   & 1.032 \\
10   & 0.90 & 0.90 & $2.14\times10^{-1}$   & $2.13\times10^{-1}$   & $9.37\times10^{-4}$   & 1.004 \\
\hline
100  & 0.50 & 0.50 & $1.61\times10^{-13}$  & $1.60\times10^{-13}$  & $1.36\times10^{-16}$  & 1.001 \\
100  & 0.50 & 0.75 & $1.94\times10^{-21}$  & $1.94\times10^{-21}$  & $1.71\times10^{-24}$  & 1.001 \\
100  & 0.50 & 0.90 & $5.49\times10^{-27}$  & $5.48\times10^{-27}$  & $1.09\times10^{-29}$  & 1.002 \\\hline
100  & 0.75 & 0.50 & $4.28\times10^{-7}$   & $4.14\times10^{-7}$   & $1.38\times10^{-8}$   & 1.038 \\
100  & 0.75 & 0.75 & $2.65\times10^{-10}$  & $2.59\times10^{-10}$  & $6.11\times10^{-12}$  & 1.027 \\
100  & 0.75 & 0.90 & $2.63\times10^{-12}$  & $2.59\times10^{-12}$  & $3.88\times10^{-14}$  & 1.016 \\\hline
100  & 0.90 & 0.50 & $9.70\times10^{-4}$   & $8.73\times10^{-4}$   & $9.70\times10^{-5}$   & 1.111 \\
100  & 0.90 & 0.75 & $7.31\times10^{-5}$   & $6.87\times10^{-5}$   & $4.40\times10^{-6}$   & 1.066 \\
100  & 0.90 & 0.90 & $1.82\times10^{-5}$   & $1.78\times10^{-5}$   & $4.96\times10^{-7}$   & 1.028 \\
\hline
\end{tabular}
\caption{\label{relgap} Absolute and relative gaps for different combinations of $(m,\lip,\alpha)$}

\end{table}

Further computations with $\alpha=\frac12$ and $\lip=1$ showed relative gaps consistent with the bound $\bm_m/\Rtight_m\leq\sqrt{3/2}$, that we conjecture to be globally valid for all $m\geq1$ and  $\lip,\alpha\in[\frac12,1]$. This is also consistent with a conjecture in \cite[Section 5]{bc18} for the nonexpansive case $\lip=1$, regarding the asymptotics of $\Rtight_m$ when $\alpha=\frac12$.

\begin{table}[ht]
\begin{tabular}{|ccc|}
\multicolumn{3}{c}{$\lip=0.60$}\\[0.5ex]
\hline
$m$ & $\alpha_{\lip}(m)$ & $\bm_m/\Rtight_m$ \\
\hline
10   & 0.975958 & 1.000047 \\
100  & 0.997536 & 1.000007 \\
500  & 0.999506 & 1.000001 \\
1000 & 0.999753 & 1.000001 \\
\hline
\end{tabular}
\hspace{1cm}
\begin{tabular}{|ccc|}
\multicolumn{3}{c}{$\lip=0.75$}\\[0.5ex]
\hline
$m$ & $\alpha_{\lip}(m)$ & $\bm_m/\Rtight_m$ \\
\hline
10   & 0.915118 & 1.002703 \\
100  & 0.990709 & 1.000425 \\
500  & 0.998128 & 1.000088 \\
1000 & 0.999063 & 1.000044 \\
\hline
\end{tabular}

\vspace{1.5ex}
\begin{tabular}{|ccc|}
\multicolumn{3}{c}{$\lip=0.90$}\\[0.5ex]
\hline
$m$ & $\alpha_{\lip}(m)$ & $\bm_m/\Rtight_m$ \\
\hline
10   & 0.755654 & 1.031239 \\
100  & 0.965262 & 1.007519 \\
500  & 0.992865 & 1.001612 \\
1000 & 0.996421 & 1.000813\\
\hline
\end{tabular}
\hspace{1cm}
\begin{tabular}{|ccc|}
\multicolumn{3}{c}{$\lip=0.99$}\\[0.5ex]
\hline
$m$  & $\alpha_{\lip}(m)$ & $\bm_m/\Rtight_m$ \\
\hline
10   & 0.527960 & 1.085711 \\
100  & 0.715765 & 1.113054 \\
500  & 0.915340 & 1.036909 \\
1000 & 0.955622 & 1.019626 \\
\hline
\end{tabular}
\caption{\label{relgapopt} Relative gaps at the optimal parameters $\alpha_\lip(m)$}

\end{table}

 We complement these observations by referring to Table \ref{alphaopt}, which presents a sample of approximate values of $\alpha_\lip(m)$. Every row in that table confirms that when $\lip$ is fixed,
$\alpha_\lip(m)$ increases to 1 as $m$ grows; while each column shows that for 
$m$ fixed $\alpha_\lip(m)$ decreases towards $\frac12$ as $\lip$ grows.

\begin{table}[ht]
\begin{tabular}{|c|cccc|}
\hline
$\lip$&$m = 1$&$m = 5$&$m = 20$&$m = 100$\\\hline
0.50 &1.000000 &1.000000 &1.000000 &1.000000\\
0.51 &0.980392 &0.995939 &0.998981 &0.999796\\
0.70 &0.714286 &0.889938 &0.969122 &0.993660\\
0.90 &0.555556 &0.661519 &0.850404 &0.965262\\
0.99 &0.505051 &0.515220 &0.553262 &0.715765\\\hline
\end{tabular}

\caption{\label{alphaopt} Numerical approximation of the optimal parameters $\alpha_\lip(m)$}
\end{table}

\end{document}